\theoremstyle{plain}
\newtheorem{Theorem}{Theorem}[section]
\newtheorem{theorem}[Theorem]{Theorem}
\newtheorem*{theorem*}{Theorem}
\newtheorem*{"theorem"}{``Theorem''}
\newtheorem{corollary}[Theorem]{Corollary}
\newtheorem{Proposition}[Theorem]{Proposition}
\newtheorem{Lemma}[Theorem]{Lemma}
\newtheorem{lemma}[Theorem]{Lemma}
\theoremstyle{definition}
\newtheorem{definition}[Theorem]{Definition}
\theoremstyle{remark}
\newtheorem{remark}[Theorem]{Remark}
\numberwithin{equation}{section}
\newcommand\numberthis{\addtocounter{equation}{1}\tag{\theequation}}
\newenvironment{pde}{\left\{\begin{array}{rll} } {\end{array}\right.}
\newcommand{\N}{\mathbb N} 
\newcommand{\T}{\mathbb T} 
\newcommand{\R}{\mathbb R} 
\newcommand{\dist}{{\rm dist}}
\renewcommand{\div}{{\rm div \,}}
\newcommand{\id}{\mathrm{id}}
\newcommand{\sign}{\mathrm{sign}}
\renewcommand{\H}{{\mathcal H}}
\newcommand{\E}{{\mathcal E}}
\newcommand{\A}{{\mathcal A}}
\newcommand{\D}{{\mathcal D}}
\newcommand{\C}{{\mathcal C}}
\newcommand{\F}{{\mathcal F}}
\newcommand{\Eeff}{\E^{\mathsf{eff}}_\eps}
\newcommand{\Eeps}{\E_\eps}
\newcommand{\Ra} {\Rightarrow}
\newcommand{\wto}{\rightharpoonup}
\renewcommand{\d}{\,\mathrm{d}}
\newcommand{\dx}{\,\mathrm{d}x}
\newcommand{\dy}{\,\mathrm{d}y}
\newcommand{\dz}{\,\mathrm{d}z}
\newcommand{\ds}{\,\mathrm{d}s}
\newcommand{\dt}{\,\mathrm{d}t}
\newcommand{\dr}{\,\mathrm{d}r}
\newcommand{\cc}{\Subset}
\newcommand{\eps}{\varepsilon}
\newcommand{\curl}{\mathrm{curl} \,}
\newcommand{\tr}{\mathsf{tr}}
\renewcommand{\S}{\mathcal{S}}
\let\phi = \varphi
\newcommand{\com}[1]{\textcolor{black}{#1}}
\begin{document}

\title[Dislocation motion in simplified elasticity]{On the motion of curved dislocations in three dimensions: Simplified linearized elasticity}

\author[I. Fonseca]{Irene Fonseca}
\address{Irene Fonseca\\Department of Mathematical Sciences\\Carnegie-Mellon University\\5000 Forbes Avenue\\ Pittsburgh, PA 15213}
\email{fonseca@andrew.cmu.edu}

\author[J. Ginster]{Janusz Ginster}
\address{Janusz Ginster\\Institut f\"{u}r Mathematik, Humboldt-Universit\"{a}t zu Berlin, Rudower Chaussee 25, 12489 Berlin}
\email{janusz.ginster@math.hu-berlin.de}


\author[S. Wojtowytsch]{Stephan Wojtowytsch}
\address{Stephan Wojtowytsch\\Program in Applied and Computational Mathematics\\Princeton University\\205 Fine Hall\\ Princeton, NJ 08544}
\email{stephanw@princeton.edu}

\date{\today}

\subjclass[2010]{35K93; 35Q74; 74N05}
\keywords{Discrete dislocation dynamics, Curve Shortening Flow, Simplified linearized Elasticity, Rigorous Asymptotic Expansion}

\maketitle

\begin{abstract}
\com{It is shown} that in core-radius cutoff regularized simplified elasticity (where the elastic energy depends quadratically on the full displacement gradient rather than its symmetrized version), the force on a dislocation curve by the negative gradient of the elastic energy asymptotically approaches the mean curvature of the curve as the cutoff radius \com{converges} to zero. Rigorous error bounds in H\"older spaces \com{are provided}.

As an application, convergence of dislocations moving by the gradient flow of the elastic energy to dislocations moving by the gradient flow of the arclength functional, when the motion law is given by an $H^1$-type dissipation, and convergence to curve shortening flow in co-dimension $2$ for the usual $L^2$-dissipation \com{is established}. In the second scenario, existence and regularity \com{are assumed} while the $H^1$-gradient flow is treated in full generality (for short time).

The methods developed here are a blueprint for the more physical setting of linearized isotropic elasticity.
\end{abstract}

\tableofcontents

\newpage

\section{Introduction}
While we typically consider crystalline materials as periodic grids, real crystals have defects. An important class among them are the line defects known as dislocations.

The creation and motion of crystal dislocations is the key mechanism for the plastic deformation of crystalline solids and has received substantial attention in both the mathematical and engineering communities. Except for the recent article \cite{hudson}, most rigorous efforts have been focused on special cases concerning either straight dislocations orthogonal to a plane (see, for example, \cite{MR2192291,Gi18,MR2989446,FaPaPo18} for the static setting and \cite{MR3323554,MR3639276} for the dynamics) or curved dislocations in a plane (plane models, e.g.,\ \cite{MR1935021}). \com{The study of the} the motion of crystal dislocations have been made on several different scales, from atomistic simulations  to the evolution of dislocation densities on a continuum scale \com{(see}\cite{MR3811366,MR3043571,MR3453936}\com{),} and in between, \com{see(}\cite{MR3621813}\com{)}.

We consider the intermediate regime of discrete dislocation lines in a crystal described by a continuum model of elasticity. \com{However, being an} atomistic phenomenon, \com{here the description of dislocations requires the introduction of a small parameter $\eps>0$} coupled to the grid scale, together with the postulate that the crystalline material is well described by a linear continuum theory of elasticity when further away from the dislocation line than $\eps$ and that the energy in the region of the material closer to the dislocation can be neglected (a core-radius regularization), see \cite{MR2338415} for a justification on the discrete level in the case of screw dislocations. Asymptotics for such energies as the regularizing parameter $\eps$ tends to $0$ have been obtained in the language of $\Gamma$-convergence in \cite{MR3396435,MR3375538}.

In this article we consider a model of {\em simplified linearized elasticity} in which the energy of a displacement $u$ of a body \com{with reference configuration} $\Omega$ is given by the Dirichlet energy 
\[
\E(u) := \int_\Omega |\nabla u|^2\dx.
\]
We will consider \com{the fully linearized isotropic} case in future work \cite{nextone}. The presence of dislocations is expressed as a non-zero curl, and instead of the deformation gradient $\nabla u$, we consider \com{as elastic strain} a tensor-field $\beta$ which satisfies $\curl \beta = \mu$ where $\mu$ is the Nye dislocation measure of the dislocation (see below). The measure $\mu$ is concentrated on the dislocation line $\gamma$, and thus $\beta$ cannot be $L^2$-regular and the elastic energy would be infinite. To compensate for this blow-up, we remove an $\eps$-tubular neighborhood around the curve $\gamma$ from our domain and consider the modified elastic energy
\[
\E_\eps(\beta) := \int_{\Omega\setminus B_\eps(\gamma)} |\beta|^2\dx \com{.}
\]
Assuming that the elastic energy is minimized given the dislocation line (elastic equilibrium under plastic side condition), we can \com{then} associate an elastic energy to the dislocation itself. \com{Under the assumption of a} slow movement of the dislocation $\gamma$ compared to the elastic relaxation time in the crystal, we can \com{expect} that $\gamma$ moves by the gradient flow of this elastic energy.

In this article we \com{find} an {\em effective energy} $\E_\eps^{\rm eff}(\gamma)$ which simplifies the non-local interaction between the boundary of the material body $\partial\Omega$ and the dislocation line, and then obtain asymptotic expansions for the energy $\E_\eps^{\rm eff}(\gamma)$ in $\eps$\com{.} \com{In turn, this allows us to deduce} the negative gradient of the effective energy with respect to the variation of the dislocation line, the so-called Peach-Koehler force $F^{PK,\eps}$. Precisely, we show that $\E_\eps^{\rm eff}(\gamma)$ \com{converges} to the length of the curve $\gamma$ (suitably rescaled) and that the Peach-Koehler force approaches the mean curvature of $\gamma$, also rescaled\com{, i.e.,}
\[
F^{PK,\eps}(\gamma) \approx \vv H_\gamma + R^{PK,\eps}
\]
where $R^{PK,\eps}$ is small in $L^\infty(\gamma)$ and bounded in $C^{0,\alpha}(\gamma)$ if $\gamma$ is a $C^{2,\alpha}$-curve -- the precise result can be found in Theorem \ref{theorem asymptotics force}. Unfortunately, the bounds we obtain are not strong enough to pass to the limit in the $L^2$-gradient flows of the energies $\E_\eps^{\rm eff}$ since the remainder term is only bounded in the critical H\"older space, but not necessarily small. However, we prove that if solutions to the gradient flow of $\E_\eps$ exist and remain regular up to a given time, do not approach the domain boundary $\partial\Omega$ or develop self-intersections, then these solutions \com{converge to} a solution to curve-shortening flow, which is the gradient flow of the energy limit.

Unconditionally, we prove \com{short-time} existence, regularity and convergence for solutions to a modified gradient flow where the dissipation is given by an $H^1$-type inner product similar to the one used in \cite{hudson}. This converts the PDE into a Banach-space valued ODE 
\[
\partial_t\Gamma = \left(1- \Delta_\Gamma\right)^{-1} F^{PK,\eps}(\Gamma)
\]
which can easily be solved since the inverse curve Laplacian regularizes the Peach-Koehler force sufficiently. With this regularized Peach-Koehler force, we can also include non-linear mobilities which distinguish between edge and screw dislocations (the tangent vector to the dislocation line $\gamma$ is orthogonal/parallel to a fixed vector $b\in \R^3$ respectively, the so called Burgers vector). The main results on the asymptotics for dislocation dynamics are given in Theorem \ref{theorem motion l2} and Corollary \ref{corollary motion h1}, with further extensions in Remarks \ref{remark physical dissipation} and \ref{remark physical h1}.

The proofs are based on a careful \com{decomposition $\beta$ as } $\beta = \S + \nabla u$ where $\S$ is the strain due to the presence of the dislocation in an infinite crystal and $u$ encodes the interaction of $\beta$ with the domain $\Omega$. We can write $\S$ as the convolution of an explicit kernel $k$ with the Nye measure $\mu$ on $\gamma$. Explicit formulas are obtained by using the second order Taylor expansion of $\gamma$ and estimating the error terms. The Peach-Koehler force is thus {\em doubly non-local} in that $\S$ depends non-locally on $\gamma$ and the force itself is an integral of terms involving $\S$ and $u$ over the boundary of the $\eps$-tubular neighborhood around $\gamma$.

This article is structured as follows. In Section \ref{section preliminaries} we collect some notations which will be used in the article and may not be standard. Section \ref{section energy} is devoted to the study of the elastic energy, the effective energy and its variation. In Section \ref{section asymptotics} we study the asymptotic behavior of the strain $\beta$ as $\eps\to0$ in $L^\infty$ and H\"older spaces for regular curves. \com{Then we} use our results to obtain an asymptotic expansion of the Peach-Koehler force. \com{In Section \ref{section evolution}} we show that also the gradient flows of the energies converge (with the aforementioned caveats), and \com{we} discuss our results and future work in Section \ref{section conclusion}.

\section{\com{Notations and Preliminaries}} \label{section preliminaries} Let us briefly list the notations and concepts used in this article which are slightly non-standard.

{\bf Embeddedness radius of a curve.} It is well-known that an embedded $C^2$-curve in $\R^3$ has a tubular neighborhood, i.e.\ a neighborhood of the form $B_r(\gamma):= \{x\in \R^3\:|\:\dist(x,\gamma)<r\}$, for some $r>0$, which is diffeomorphic to $S^1 \times D_1$ (where $D_1$ is the unit disk in $\R^2$) via a map
\[
\Phi: S^1 \times D_1 \to \R^3, \qquad \Phi(s, \rho,\sigma) := \gamma(s) + r\left(\rho\,n_1(s) + \sigma\,n_2(s)\right)
\]
for two normal vector fields to $\gamma$, \com{$n_1, n_2$}. We call the supremum of all radii for which such a diffeomorphism exists the {\em embeddedness radius of $\gamma$}. Further details can be found in Appendix \ref{appendix embeddedness} where we also show that the embeddedness radius is lower-semi continuous under $C^{2,\alpha}$-convergence of curves.

{\bf Notation.} \com{Below we list some notatins used in the sequel.}

\vspace{2ex}

\begin{tabular}{l|l}
$X(A; B)$ &\com{function} space $X$ (e.g.,\ $L^p, C^{0,\alpha}, H^k, \dots$) with domain $A$ and codomain $B$\\
$\D'$ &\com{space} of distributions\\
$a\wedge b$ &\com{cross} product of two vectors $a,b$ in $\R^3$\\
$A\cc B$ & $A$ is compactly contained in $B$, i.e., $A\subseteq\overline A\subseteq B$ and $\overline A$ is compact\\
$C^{2,\alpha}_{1, \alpha/2}$ &\com{parabolic} H\"older space of functions with two continuous spatial derivatives\\
	&  and one continuous time derivative such that $\partial_tu$ and $D^2u$ are simultaneously\\
	& $C^{0,\alpha}$-continuous in space and $C^{0,\alpha/2}$ in time\\
$f'$ &derivative of $f$ with respect to a spatial scalar variable (usually called $s$)\\
$\dot f$ &time-derivative of $f$\\
\com{$\Omega$} &\com{bounded Lipschitz-domain in $\R^3$}\\
\com{$b$} &\com{Burgers vector in $\R^3$ which is fixed throughout the paper}\\
$\gamma$ &dislocation curve embedded in a domain $\Omega$\\
$\tau$ &unit tangent vector $\frac{\gamma'}{|\gamma'|}$ of a curve $\gamma$\\
$\vv H$ & curvature vector of a curve\\
$\H^s$ & $s$-dimensional Hausdorff measure\\
$\mu$ &Nye dislocation measure $b\otimes \tau\cdot\H^1|_\gamma$\\
$\beta$ &elastic \com{strain}\\
$\S$ &singular strain associated to $\mu$\\
$u$ &function such that $\beta = \S + \nabla u$\\
$B_\eps(\gamma)$ &\com{$\eps$-tube} around the curve $\gamma$\com{, often} abbreviated as $B_\eps$\\
$\pi$ &\com{closest} point projection from $B_r(\gamma)$ to $\gamma$ \\
$\nu_\eps$ &exterior normal vector to the tubular neighborhood $\partial B_\eps(\gamma)$\\
$\Omega_\eps$ &the set $\Omega\setminus B_\eps(\gamma)$\\
$\Gamma$ &a family of space curves evolving in time\\
k &gradient of the Newton kernel $G$ in three dimensions
\end{tabular}

\vspace{2ex}

{\bf Neumann problems.} We recall a result on elliptic Neumann problems which we will need later on. Let $f\in L^2(\Omega)$ and $g\in L^2(\partial\Omega)$. We say that a function $u\in H^1(\Omega)$ solves the Neumann problem
\[
\begin{pde} - \Delta u &=f &\text{in }\Omega\\
	\partial_\nu u &= g &\text{on }\partial\Omega
\end{pde}
\]
on the bounded Lipschitz-domain $\Omega \com{\subset \R^3}$ if $\int_\Omega u= 0$ and one of the two equivalent conditions is met:
\begin{enumerate}
\item $u$ minimizes the energy $\int_\Omega \frac 12\,|\nabla u|^2 -uf\dx - \int_{\partial\Omega} ug\d\H^{2}$. \label{eq: Neumann variational}
\item $\int_\Omega \langle \nabla u, \nabla \phi\rangle\dx = \int_\Omega f\phi\dx + \int g\phi\d\H^{\com{2}}$ for all $\phi \in H^1(\Omega)$.
\end{enumerate}
Considering the energy competitor $v=0$ \com{in \eqref{eq: Neumann variational}} shows that the minimum energy is non-positive, and therefore 
\begin{align}
\int_\Omega |\nabla u|^2\dx &\com{\leq} 2\int_\Omega uf \dx + 2\int_{\partial\Omega}ug\d\H^{\com{2}} \nonumber\\
	&\leq 2\,\|u\|_{L^2(\Omega)}\|f\|_{L^2(\Omega)} + 2\|u\|_{L^2(\partial\Omega)}\|g\|_{L^2(\partial\Omega)} \nonumber\\
	&\leq C\left[\|f\|_{L^2(\Omega)} + \|g\|_{L^2(\partial\Omega)}\right]\,\|\nabla u\|_{L^2(\Omega)}, \label{eq: Neumann}
\end{align}
\com{and so} $\|u\|_{H^1(\Omega)}\leq C\left[\|f\|_{L^2(\Omega)} + \|g\|_{L^2(\partial\Omega)}\right]$, where the constant $C$ depends on the Poincar\'e- and trace-constants of the domain $\Omega$.

If the boundary of $\Omega$ is $C^2$-regular, standard regularity estimates imply that $u\in H^2(\Omega)$ and $\partial_\nu u= g$ in the sense of traces.

{\bf Further conventions.} The vector calculus operators $\curl$ and $\div$ are applied row-wise to matrices. In the following, we will not distinguish between a parameterized curve, its reparameterizations and its trace.

\com{In addition, our proofs we need several results which we introduce in appendices:}
\begin{itemize}
\item \com{For the derivation of the effective energy, we require uniform trace and Poincar\'e constants on the domains $\Omega_\eps = \Omega\setminus B_\eps(\gamma)$ given by the physical body $\Omega$ without a tubular neighborhood of the dislocation. This is the content of Appendix \ref{appendix domain with hole}.}

\item \com{Both in the proof of these results and obtaining the asymptotic expansion of the Peach-Koehler force, we require cylindrical coordinates for the tubular neighborhood around $\gamma$ which we remove and particularly its boundary. These coordinates are introduced in Appendix \ref{appendix embeddedness}.}

\item \com{Finally, in Appendix \ref{appendix l2 h1} we collect a few results on $L^2$- and $H^1$-gradient flows and the equation $(1-\delta\Delta) u= f$ for small $\delta$.}
\end{itemize}

\section{Elastic energy and Peach-Koehler force}\label{section energy}

\subsection{The elastic energy}

We consider the minimum of the elastic energy of deformations with a prescribed topological defect in the form of a fixed dislocation loop $\gamma$ in a bounded, \com{simply-connected} Lipschitz domain $\Omega\subset\R^3$. For technical reasons, we will in addition assume that $\Omega$ is simply connected. 

Let $\gamma: S^1 \rightarrow \Omega$ be a regular, closed Lipschitz curve , and let $b\in \R^3$ be the Burgers vector of the dislocation. We denote by $\gamma = \gamma(S^1)$ also the trace of the curve in $\R^3$ and we do not distinguish between reparametrizations. The Nye dislocation measure of the dislocation is then given by
\[
\mu_{\gamma} := b \otimes \tau \, \H^1_{\com{|}\gamma}
\]
where $\tau: \gamma \rightarrow S^2$, $\tau_{\gamma(t)} := \frac{\gamma'(t)}{|\gamma'(t)|}$ is the unit tangent to $\gamma$. We define the set of associated admissible strains of the crystal $\Omega$ by
\[
\mathcal{A}(\mu_{\gamma}) := \{  \beta \in L^1(\Omega; \R^{3\times 3}) \cap L^2_{\com{\rm{loc}}}(\Omega \setminus \gamma; \R^{3\times 3}): \curl \beta = \mu_{\gamma} \text{ in }\D'\left(\Omega\times \R^{3\times 3}\right) \}\com{.}
\]
Note that \com{the} condition $\curl \beta = \mu_{\gamma}$ is not compatible with \com{prescribing $\beta \in L^2(\Omega;\R^{3\times3})$}. 
In fact, we will show later that \com{if} $\beta \in \mathcal{A}(\mu_{\gamma})$ \com{then} $\int_{\Omega \setminus B_{\eps}(\gamma)} |\beta|^2 \, dx \gtrsim |\log \eps|$ \com{for every $\eps>0$}. 
\com{On the other hand}, a classical linearized elastic energy is quadratic in the strain\com{, so in order to obtain a finite} energy we use a standard core-radius cut-off approach. We consider a `simplified linearized elasticity' for the stored elastic energy induced by a dislocation given by the dislocation density $\mu_{\gamma}$, i.e., we set
\begin{align}\label{eq: energyeps}
\Eeps (\mu_{\gamma}) := \inf\bigg\{ \frac12 \int_{\Omega_{\eps}(\gamma)} |\beta|^2 \dx\:\bigg|\: \beta \in \mathcal{A}(\mu_{\gamma}) \bigg\}\com{,}
\end{align}
where 
\[
\Omega_{\eps}(\gamma) := \Omega \setminus B_{\eps}(\gamma) \text{ \com{with} } \com{B_{\eps}(\gamma) = }\{x\in \Omega \:|\: \dist(x,\gamma)<\eps\}.
\]
When no confusion is possible, we will also simply write $\Omega_\eps = \Omega_\eps(\gamma)$. Note that since $\gamma$ is a compact curve in $\Omega$, there is a positive distance between $\gamma$ and $\partial \Omega$, \com{and so} for small enough $\eps>0$ also $B_\eps(\gamma)\cc\Omega$ \com{and} $\partial\Omega_\eps = \partial \Omega \cup \partial B_\eps(\gamma)$.

In this energy \com{\eqref{eq: energyeps}}, $\beta$ plays the role of the deformation gradient, but cannot be a gradient for topological reasons due to the presence of the dislocation $\gamma$ where plastic `slip' occurs, encoded by the non-trivial curl. Heuristically, we can imagine an extra half-plane of atoms wedged in on one side of $\gamma$, but not the other one, see Figure \ref{fig: straightdislocationline}.

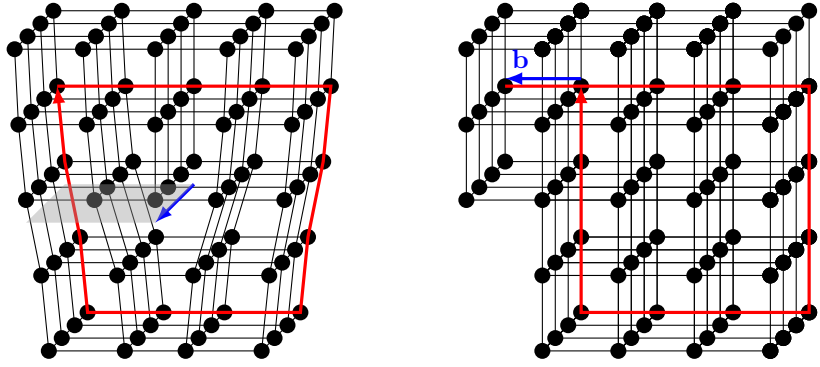
\begin{figure}[h]
 \centering
\begin{tikzpicture}[scale =1]

\foreach \t in {0,-0.17,-0.34,-0.51}
{
\draw (1+\t,1+\t) -- (4+\t,1+\t);
\fill (1+\t,1+\t) circle (3pt);
\fill (2+\t,1+\t) circle (3pt);
\fill (3+\t,1+\t) circle (3pt);
\fill (4+\t,1+\t) circle (3pt);

\draw (1.1+\t,0+\t) -- (3.9+\t,0+\t);

\draw (1.1+\t,0+\t) -- (1+\t,1+\t);
\fill (1.1+\t,0+\t) circle (3pt);

\draw (2.1+\t,0+\t) -- (2+\t,1+\t);
\fill (2.1+\t,0+\t) circle (3pt);

\draw (2.9+\t,0+\t) -- (3+\t,1+\t);
\fill (2.9+\t,0+\t) circle (3pt);

\draw (3.9+\t,0+\t) -- (4+\t,1+\t);
\fill (3.9+\t,0+\t) circle (3pt);

\draw (0.9+\t,2+\t) -- (4.3+\t,2+\t);
\fill (0.8+\t,2+\t) circle (3pt);
\draw (0.8+\t,2+\t) -- (1+\t,1+\t);
\fill (1.7+\t,2+\t) circle (3pt);
\fill (2.5+\t,2+\t) circle (3pt);
\draw (1.7+\t,2+\t) -- (2+\t,1+\t);
\fill (3.3+\t,2+\t) circle (3pt);
\draw (3.3+\t,2+\t) -- (3+\t,1+\t);
\fill (4.2+\t,2+\t) circle (3pt);
\draw (4.2+\t,2+\t) -- (4+\t,1+\t);

\draw (0.7+\t,3+\t) -- (4.4+\t,3+\t);
\fill (0.7+\t,3+\t) circle (3pt);
\draw (0.7+\t,3+\t) -- (0.8+\t,2+\t);
\fill (1.6+\t,3+\t) circle (3pt);
\draw (1.6+\t,3+\t) -- (1.7+\t,2+\t);
\fill (2.5+\t,3+\t) circle (3pt);
\draw (2.5+\t,3+\t) -- (2.5+\t,2+\t);
\fill (3.4+\t,3+\t) circle (3pt);
\draw (3.4+\t,3+\t) -- (3.3+\t,2+\t);
\fill (4.3+\t,3+\t) circle (3pt);
\draw (4.3+\t,3+\t) -- (4.2+\t,2+\t);

\draw (0.65+\t,4+\t) -- (4.35+\t,4+\t);
\fill (0.65+\t,4+\t) circle (3pt);
\draw (0.65+\t,4+\t) -- (0.7+\t,3+\t);
\fill (1.55+\t,4+\t) circle (3pt);
\draw (1.55+\t,4+\t) -- (1.6+\t,3+\t);
\fill (2.5+\t,4+\t) circle (3pt);
\draw (2.5+\t,4+\t) -- (2.5+\t,3+\t);
\fill (3.45+\t,4+\t) circle (3pt);
\draw (3.45+\t,4+\t) -- (3.4+\t,3+\t);
\fill (4.35+\t,4+\t) circle (3pt);
\draw (4.35+\t,4+\t) -- (4.3+\t,3+\t);
}

\draw (1,1) -- (0.49,0.49);
\draw (2,1) -- (1.49,0.49);
\draw (3,1) -- (2.49,0.49);
\draw (4,1) -- (3.49,0.49);

\draw (1.1,0) -- (1.1-0.51,-0.51);
\draw (2.1,0) -- (2.1-0.51,-0.51);
\draw (2.9,0) -- (2.9-0.51,-0.51);
\draw (3.9,0) -- (3.9-0.51,-0.51);

\draw (0.8,2) -- (0.8-0.51,2-0.51);
\draw (1.7,2) -- (1.7-0.51,2-0.51);
\draw (2.5,2) -- (2.5-0.51,2-0.51);
\draw (3.3,2) -- (3.3-0.51,2-0.51);
\draw (4.2,2) -- (4.2-0.51,2-0.51);

\draw (0.7,3) -- (0.7-0.51,3-0.51);
\draw (1.6,3) -- (1.6-0.51,3-0.51);
\draw (2.5,3) -- (2.5-0.51,3-0.51);
\draw (3.4,3) -- (3.4-0.51,3-0.51);
\draw (4.3,3) -- (4.3-0.51,3-0.51);

\draw (0.65,4) -- (0.65-0.51,4-0.51);
\draw (1.55,4) -- (1.55-0.51,4-0.51);
\draw (2.5,4) -- (2.5-0.51,4-0.51);
\draw (3.45,4) -- (3.45-0.51,4-0.51);
\draw (4.35,4) -- (4.35-0.51,4-0.51);

\draw[very thick,blue,>=latex,<-] (2.5-0.51,1.7-0.51) -- (2.5,1.7);
\draw[red,very thick,>=latex,<-] (0.7,3) -- (0.8,2) -- (1,1) -- (1.1,0) -- (3.9,0) -- (4,1) -- (4.2,2) -- (4.3,3) -- (0.7,3);
\fill[gray!80!white,opacity=0.4] (2.5-0.51,1.7-0.51) -- (2.5,1.7) -- (0.8,1.7) -- (0.8-0.51,1.7-0.51) -- cycle;
\end{tikzpicture}
\quad \quad \quad \quad
\begin{tikzpicture}[scale=1]
\foreach \t in {0,-0.17,-0.34,-0.51}{
\foreach \x in {1,2,3}
\foreach \y in {0,1,2,3}{
\draw (\x+\t,\y+\t) -- (\x+\t,\y+\t+1);
\draw (\x+\t,\y+\t) -- (\x+\t+1,\y+\t);
\draw (\x+\t+1,\y+\t) -- (\x+1+\t,\y+1+\t);
\draw (\x+\t,\y+1+\t) -- (\x+1+\t,\y+1+\t);
\fill (\x+\t,\y+\t) circle (3pt);
\draw (\x,\y) -- (\x-0.51,\y-0.51);
\fill (4+\t,\y+\t) circle (3pt);
\draw (4,\y) -- (4-0.51,\y-0.51);
\fill (\x+\t,4+\t) circle (3pt);
\draw (\x,4) -- (\x-0.51,4-0.51);
}
\fill (0+\t,4+\t) circle (3pt);
\fill (0+\t,3+\t) circle (3pt);
\fill (0+\t,2+\t) circle (3pt);
\draw (0+\t,2+\t) -- (0+\t,4+\t);
\draw (0+\t,4+\t) -- (1+\t,4+\t);
\draw (0+\t,3+\t) -- (1+\t,3+\t);
\draw (0+\t,2+\t) -- (1+\t,2+\t);
\fill (4+\t,4+\t) circle (3pt);
}
\draw (4,4) -- (4-0.51,4-0.51);
\draw (0,4) -- (0-0.51,4-0.51);
\draw (0,3) -- (0-0.51,3-0.51);
\draw (0,2) -- (0-0.51,2-0.51);

\draw[very thick, red, ->,>=latex] (0,3) -- (4,3) -- (4,0) -- (1,0) -- (1,3);
\draw[very thick, blue, ->,>=latex] (1,3.1) -- (0,3.1);
\draw (0.2,3.1) node[anchor=south, blue,very thick] {$\mathbf{b}$};
\end{tikzpicture}
\caption{Sketch of a straight edge dislocation line in a three-dimensional cubic lattice. Left: Deformed configuration with dislocation. Right: Reference configuration with Burgers vector $b$.} \label{fig: straightdislocationline}
\end{figure}

\com{In physical units, the Burgers vector is proportional to the lattice, the vector $b$ is the normalized Burgers vector and of order $O(1)$. So the physical energy associated to a single dislocation with normalized Burgers vector $b$ would be multiplied by the square of the lattice constant, a scalar multiple of $\eps$.}

\begin{remark}
\com{In} linear elasticity theories for crystal dislocations \com{the energy becomes}
\[
\Eeps (\mu_{\gamma}) := \inf\bigg\{ \frac12 \int_{\Omega_{\eps}(\Gamma)} \C\beta:\beta \dx: \beta \in \mathcal{A}(\mu_{\gamma}) \bigg\}
\]
for a tensorial map $\C:\R^{3\times 3}\to \R^{3\times 3}$ which associates the stress $\C\beta$ to the strain $\beta$. In our simplified case, $\C=\id$\com{, while} isotropic elastic tensors have the form
\[
\C_{ijkl} = \lambda\,\delta_{ij}\delta_{kl} + \mu\big(\delta_{ik}\delta_{jl} + \delta_{il}\delta_{jk}\big)
\]
leading to an elastic energy
\[
\Eeps^{\lambda,\mu} (\mu_{\gamma}) := \inf\bigg\{ \frac12 \int_{\Omega_{\eps}(\gamma)} \left(2\mu\,\left|\frac{\beta+\beta^T}2\right|^2 + \lambda\,\tr(\beta)^2\right) \dx: \beta \in \mathcal{A}(\mu_{\gamma}) \bigg\}.
\]
Assuming \com{that} the {\em Lam\'e moduli} $\lambda,\mu$ satisfy the ellipticity conditions $\mu>0, \lambda+2\mu>0$, this energy is bi-Lipschitz equivalent to that of simplified linearized elasticity \com{\eqref{eq: energyeps}} due to Korn's inequality, and many of the same methods are applicable. This setting will be the topic of future \com{work} \cite{nextone}.
\end{remark}

\com{As a first step toward understanding the energy \eqref{eq: energyeps}} we recall how to construct a solution to the Euler-Lagrange equations to the whole space problem
\[
\begin{pde} \div \S &= 0 &\text{ in } \R^3, \\ \curl \S &= \mu_{\gamma} &\text{ in } \R^3.\end{pde}
\]
Notice that the closedness of $\gamma$ implies that $\div \mu_{\gamma} = 0$ in the distributional sense.
Then, due to \cite{MR2293957,MR3375538}, a distributional solution $\S \in L^{\frac32}(\R^3;\R^{3\times 3}) \cap L^{2}_{\com{\rm{loc}}}(\R^3\setminus \gamma;\R^{3\times 3})$ is given by
$\S := (-\Delta)^{-1} \curl \mu_{\gamma}$, which is verified in a formal computation
\begin{align*}
\div\S &= \div  (-\Delta)^{-1}\curl \mu_\gamma
	= (-\Delta)^{-1}\div\curl\mu_\gamma
	= (-\Delta)^{-1}\,0
	=0
\end{align*}
and
\begin{align*}
\curl\S &= \curl (-\Delta)^{-1}\curl \mu_\gamma\\
	&= (\curl\circ\curl)\,(-\Delta)^{-1}\mu_\gamma\\
	&= (\nabla \circ \div - \Delta)(-\Delta)^{-1}\mu_\gamma\\
	&= \com{(-\Delta)^{-1}\nabla\left(\div \mu_\gamma + (-\Delta)(-\Delta)^{-1}\mu_\gamma \right)}\\
	&= \com{(-\Delta)^{-1}\,\nabla \,0 + \mu_\gamma} \\
	&= \mu_\gamma.
\end{align*}
It is well known that the inverse of the negative Laplacian (vanishing at infinity) is given by convolution with the Newtonian kernel $G(x) = \frac{1}{4\pi\,|x|}$, and thus, formally,
\begin{align}
\S(x) &= (-\Delta)^{-1} \curl \mu_{\gamma} \nonumber\\
	&= \curl (-\Delta)^{-1}\mu_\gamma \nonumber\\
	&= \curl_x \int_{\R^3}G(x-y)\d\mu_\gamma(y) \nonumber\\
	&= \curl_x \int_{\gamma} G(x-y)\, b\otimes \tau_y \d\H^1_y \nonumber\\
	&= b\otimes \curl_x \int_\gamma G(x-y)\,\tau_y\d\H^1_y \nonumber\\
	&= b\otimes \int_\gamma \nabla G(x-y)\wedge\tau_y\d\H^1_y \nonumber\\
	&=b\otimes \int_{\gamma} k(x-y) \wedge \tau_y \d\mathcal{H}^1_y \label{eq: Sconvolution}
\end{align}
where $k(x) := \nabla G(x) = \frac{- x}{4\pi\,|x|^3}$.
 Clearly, the strain $\S$ is $C^\infty$ away from the curve $\gamma$. We will investigate the structure of $\S$ below in great detail in the case that $\gamma\in C^{2}(S^1;\R^3)$, which will also give a more precise information on the nature of the singularity.

We can now express the energy $\E_{\eps}(\mu_{\gamma})$ in \com{terms of $\S$}.
First we notice that every function $\beta \in \mathcal{A}(\mu_{\gamma})$ can be written as $\beta = \S + \nabla u$ for some $u \in W^{1,1}(\Omega; \R^3) \cap H^1(\Omega_{\eps}(\gamma);\R^3)$, and vice versa since $\S$ fixes the curl of $\beta$ and we assumed $\Omega$ to be simply connected.
\com{We have}
\begin{align*}
\int_{\Omega_{\eps}(\gamma)} \frac12|\beta|^2 \dx &= \frac12 \int_{\Omega_{\eps}(\gamma)} |\S|^2 \dx + \frac12 \int_{\Omega_{\eps}} |\nabla u|^2 \dx + \int_{\Omega_\eps(\gamma)} \langle \S, \nabla u\rangle\dx\\
	&= \frac12 \int_{\Omega_{\eps}(\gamma)} |\S|^2 \dx + \frac12 \int_{\Omega_{\eps}(\gamma)} |\nabla u|^2 \dx + \int_{\Omega_\eps(\gamma)} \left(\div(\S u) - \langle \div \S,u\rangle \right) \dx\\
	&= \frac12 \int_{\Omega_{\eps}(\gamma)} |\S|^2 \dx + \frac12 \int_{\Omega_{\eps}(\gamma)} |\nabla u|^2 \dx + \int_{\partial \Omega} u \cdot \S \nu \d\mathcal{H}^2 - \int_{\partial B_{\eps}(\gamma)} u \cdot \S \nu_{\eps} \d\mathcal{H}^2,
\end{align*}
where $\nu_{\eps}$ denotes the exterior unit normal to $B_\eps(\gamma)$ on $\partial B_\eps(\gamma)$ (i.e., the interior unit normal to $\Omega_\eps$), $\nu$ the exterior normal to $\Omega$ on $\partial \Omega$\com{, and where we used the fact that $\div \S = 0$}. The integrals are well-defined since $\S$ is smooth and bounded away from $\gamma$ and $u \in H^1(\Omega_\eps)$ has a trace on the boundary.
This motivates the definition of the domain-dependent elastic energy
\[
I_{\eps}(u) := \frac12 \int_{\Omega_{\eps}(\gamma)} |\nabla u|^2 \, dx + \int_{\partial \Omega} u \cdot \S \nu \, d\mathcal{H}^2 - \int_{\partial B_{\eps}(\gamma)} u \cdot \S \nu_{\eps} \, d\mathcal{H}^2
\]
for $u\in H^1(\Omega_\eps(\gamma); \R^3)\cap W^{1,1}(\Omega;\R^3)$. On the boundary $\partial \Omega_\eps = \partial\Omega \cup \partial B_\eps(\gamma)$, the function has to be understood in the sense of traces. Moreover, note that the values of $u$ and $\nabla u$ in $B_{\eps}(\gamma)$ do not contribute to the energy $I_{\eps}(u)$. On the other hand, every function $u \in H^1(\Omega_{\eps}(\gamma);\R^2)$ can be extended to a function $u \in H^1(\Omega;\R^2) \subseteq W^{1,1}(\Omega;\R^2)$. 
Therefore we may write
\begin{equation} \label{eq: I eps}
\E_{\eps}(\mu_{\gamma}) = \frac12 \int_{\Omega_{\eps}(\gamma)} |\S|^2 \, dx + \inf_{u \in H^1(\com{\Omega;\R^3})} I_{\eps}(u).
\end{equation}

\begin{Theorem}\label{theorem: existenceminimizer}
Let $b \in \R^3$ and \com{let} $\gamma$ \com{be} a closed, regular $C^2$-curve. Then \com{for all $\eps>0$}
\[
 \inf_{u \in H^1(\Omega_{\eps}(\gamma);\R^3)} I_{\eps}(u) =  \min_{u \in H^1(\Omega_{\eps}(\gamma); \R^3)} I_{\eps}(u).
\]
\com{Furthermore,} the minimizer $u_\eps$ of $I_\eps$ is unique up to the addition of a constant, and satisfies the Euler-Lagrange equation \com{with boundary conditions}
\begin{equation} \label{eq: ELE eps}
\begin{pde}
 -\Delta u_\eps &= 0 &\text{ in } \Omega_{\eps},\\
\com{\partial_{\nu_{\eps}}} u_\eps  &= -\S \nu_{\eps} & \text{ on } \partial B_{\eps}(\gamma),\\
\com{\partial_{\nu}} u_\eps &= -\S \nu &\text{ on } \partial \Omega.
\end{pde}
\end{equation}
\end{Theorem}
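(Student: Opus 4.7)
The strategy is to recognize $I_\eps$ as the variational form of a vector-valued Neumann problem on $\Omega_\eps$ with zero interior source and boundary flux given by $\S$, and then apply the direct method together with the Neumann theory recalled in Section \ref{section preliminaries}. Writing $\tilde\nu$ for the exterior unit normal to $\Omega_\eps$ (so $\tilde\nu = \nu$ on $\partial\Omega$ and $\tilde\nu = -\nu_\eps$ on $\partial B_\eps(\gamma)$), one rewrites
\[
I_\eps(u) = \frac12\int_{\Omega_\eps}|\nabla u|^2\dx - \int_{\partial\Omega_\eps} u\cdot g\,\dH^2, \qquad g := -\S\tilde\nu,
\]
so that, componentwise, $I_\eps$ matches the Neumann energy of Section \ref{section preliminaries}. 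Before invoking that theory, I would check the solvability condition $\int_{\partial\Omega_\eps}\S\tilde\nu\,\dH^2 = 0$. Since $\gamma$ is at positive distance from $\partial\Omega$, for small $\eps$ the matrix field $\S$ is smooth on a neighborhood of $\overline{\Omega_\eps}$; combined with $\div\S = 0$ in $\R^3\setminus\gamma$ (already verified via the convolution representation), the divergence theorem applied row by row yields the identity. Two consequences follow: $I_\eps(u+c) = I_\eps(u)$ for every $c\in\R^3$, which is precisely the source of the claimed non-uniqueness, and the Neumann problem carries no solvability obstruction.

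Next I would apply the direct method on the closed subspace $\{u\in H^1(\Omega_\eps;\R^3) : \int_{\Omega_\eps} u\dx = 0\}$. The Poincar\'e and trace inequalities for $\Omega_\eps$ from Appendix \ref{appendix domain with hole}, together with $\S\tilde\nu \in L^\infty(\partial\Omega_\eps)\subset L^2(\partial\Omega_\eps)$, give, exactly as in \eqref{eq: Neumann},
\[
I_\eps(u) \geq \frac12\,\|\nabla u\|_{L^2(\Omega_\eps)}^2 - C\,\|\S\tilde\nu\|_{L^2(\partial\Omega_\eps)}\,\|\nabla u\|_{L^2(\Omega_\eps)},
\]
which is coercive on this subspace. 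Weak lower semicontinuity of the quadratic Dirichlet term and weak-$H^1$ continuity of the $H^{1/2}$-trace then deliver a minimizer $u_\eps$, and strict convexity of $\int|\nabla u|^2\dx$ in $\nabla u$ gives uniqueness of $\nabla u_\eps$, hence of $u_\eps$ up to an additive constant vector; equivalence of the two infima (over $H^1(\Omega;\R^3)$ and over $H^1(\Omega_\eps;\R^3)$) follows from the $H^1$-extension across $B_\eps(\gamma)$ noted just before \eqref{eq: I eps}.

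Finally, the Euler--Lagrange equation \eqref{eq: ELE eps} is obtained from $\frac{d}{dt}\big|_{t=0}I_\eps(u_\eps + t\phi) = 0$ for arbitrary $\phi\in H^1(\Omega_\eps;\R^3)$, which is exactly the weak formulation
\[
\int_{\Omega_\eps}\langle \nabla u_\eps,\nabla \phi\rangle\dx = -\int_{\partial\Omega}\phi\cdot\S\nu\,\dH^2 + \int_{\partial B_\eps(\gamma)}\phi\cdot\S\nu_\eps\,\dH^2.
\]
Since $\partial B_\eps(\gamma)$ is $C^2$-smooth for $\eps$ below the embeddedness radius of the $C^2$-curve $\gamma$, standard Neumann elliptic regularity (as quoted at the end of Section \ref{section preliminaries}) upgrades the weak boundary condition on $\partial B_\eps(\gamma)$ to the pointwise one, while on the only Lipschitz piece $\partial\Omega$ the condition is interpreted through the weak normal trace. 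The step I expect to require the most care is the Poincar\'e/trace estimate on the non-smooth, $\eps$-dependent domain $\Omega_\eps$, which shrinks and develops a geometrically delicate boundary along $\partial B_\eps(\gamma)$; this is precisely the content of Appendix \ref{appendix domain with hole}, which is invoked here as a black box so that the existence--uniqueness--PDE chain reduces to the standard Neumann variational machinery.
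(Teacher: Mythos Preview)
Your proposal is correct and follows essentially the same route as the paper: reduce to the zero-mean subspace via the translation invariance coming from $\div\S=0$, establish coercivity through Poincar\'e and trace inequalities, conclude by weak lower semicontinuity and strict convexity, and read off \eqref{eq: ELE eps} as the Euler--Lagrange equation with regularity upgraded by standard Neumann theory. One minor remark: since the theorem is stated for a \emph{fixed} $\eps>0$, you do not need the uniform-in-$\eps$ Poincar\'e and trace constants from Appendix~\ref{appendix domain with hole}; ordinary constants for the Lipschitz domain $\Omega_\eps$ suffice here (the paper writes them as $c_\eps,C_\eps$), and the uniform versions only become essential later in Theorem~\ref{theorem boundary term limit}.
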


\begin{proof}
The proof \com{uses} a standard argument. First notice that since $\div \S = 0$, the energy $I_{\eps}$ is invariant under the transformation $u \rightarrow u + c$ for $c \in \R^3$. Hence, it is enough to consider the minimization in $\{u \in H^1(\Omega_{\eps};\R^3): \int_{\Omega_{\eps}} u = 0 \}$. Let $v \in H^1(\Omega_{\eps};\R^3)$ be such that $\int_{\Omega_{\eps}} v = 0$. Then
\begin{align*}
I_{\eps}(v) &\geq  \com{\frac12} \int_{\Omega_{\eps}} |\nabla v|^2 \, dx - \| v \|_{L^2(\partial \Omega_{\eps};\R^3)} \| \S \nu_{\eps} \|_{L^2(\partial \Omega_{\eps};\R^3)}\\
	&\geq c_{\eps}  \| v \|_{H^1(\Omega_{\eps};\R^3)}^2 - C_{\eps} \left(\| \S \nu_{\eps} \|_{L^2(\partial B_\eps(\gamma);\R^3)} + \| \S\nu\|_{L^2(\partial\Omega, \R^3)}\right)\, \| v \|_{H^1(\Omega_{\eps};\R^3)}.
\end{align*}
Here, $v$ \com{on $\partial \Omega_{\eps}$} has to be understood in the sense of traces. The constants $c_{\eps}, C_{\eps}>0$ stem from Poincar\'e's inequality and the trace operator, respectively. 

Hence, for fixed $\eps>0$ the functional $I_{\eps}$ is coercive. As \com{$I_{\eps}$ is a} strongly continuous, convex function, it is weakly lower-semicontinuous, and thus a minimizer exists. The equation \eqref{eq: ELE eps} is simply the corresponding Euler-Lagrange-equation whose solution is unique in the subspace of functions with vanishing average \com{(cf.~estimate \eqref{eq: Neumann})}, and thus \com{it is unique in $H^1(\Omega_{\eps};\R^3)$} up to the addition of a constant vector. Since $\S$ is $C^\infty$-smooth away from $\gamma$\com{, cf.~\eqref{eq: Sconvolution},} standard regularity theory for Neumann problems on $C^2$-domains implies that $u_\eps$ is an $H^2$-function up to the boundary and the boundary condition is therefore well-defined in the sense of traces.
\end{proof}

\subsection{The effective energy}

We are now interested in the behavior of the energy $\Eeps$ as $\eps\to 0$. Since $\S$ only depends on the curve $\gamma$, the dependence of the elastic energy on the domain $\Omega$ is solely encoded in the function $u_\eps$ and the energy $I_{\eps}$, \eqref{eq: I eps}. To simplify matters, we will take a partial limit $\eps\to 0$ only in $I_\eps$ (i.e.\ $u_\eps$). The same strategy was successfully used in \cite{MR3639276,MR2192291}.  Consider 
\begin{equation} \label{eq: def I}
I:H^1(\Omega;\R^3)\to \R, \qquad I(u) := \int_{\Omega}\frac12 |\nabla u|^2 \, dx + \int_{\partial \Omega} u \cdot \S \nu \,\mathcal{H}^2.
\end{equation}

\begin{Theorem}\label{theorem boundary term limit}
There exists a \com{unique (up to the addition of a constant)} function $u \in H^1(\Omega;\R^3)$ such that 
\[
\lim_{\eps \to 0} \min_{w \in H^1(\Omega_{\eps}(\gamma);\R^3)}I_{\eps}(w) =  I(u), 
\]
The function $u$ minimizes $I$, satisfies \com{the Neumann problem}
\[
\begin{pde}
-\Delta u & = 0 &\text{ in } \Omega, \\ \com{\partial_{\nu}} u &= - \S \nu &\text{ on } \partial \Omega
\end{pde}
\]
and the solutions $u_\eps$ to the $\Omega_\eps$-problem \eqref{eq: ELE eps} converge to $u$ strongly in $H^1(\Omega',\R^3)$ for all $\Omega'\cc\overline \Omega\setminus \gamma$. 
\end{Theorem}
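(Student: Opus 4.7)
The plan is to first establish existence of a unique (up to constants) minimizer $u$ of the limit functional $I$, then prove matching upper and lower bounds for $I_\eps(u_\eps)$, and finally extract strong convergence. For existence of $u$, I appeal to the Neumann theory recalled in Section~\ref{section preliminaries}: since $\gamma$ lies at positive distance from $\partial\Omega$, the formula \eqref{eq: Sconvolution} yields $\S\nu\in C^\infty(\partial\Omega;\R^3)\subset L^2$, and $I$ is strongly continuous, convex, and coercive on $\{w\in H^1(\Omega;\R^3):\int_\Omega w=0\}$; hence a unique minimizer $u$ exists and solves the stated Neumann problem. Since $u$ is harmonic in $\Omega$, elliptic regularity gives $u\in C^\infty_{\mathrm{loc}}(\Omega;\R^3)$, so $u$ and $\nabla u$ are bounded in a neighbourhood of $\gamma$.

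A useful simplification is obtained by testing each Euler--Lagrange equation against its own minimizer, yielding
\[
I(u)=-\tfrac12\|\nabla u\|_{L^2(\Omega)}^2, \qquad I_\eps(u_\eps)=-\tfrac12\|\nabla u_\eps\|_{L^2(\Omega_\eps)}^2.
\]
For the upper bound I use $u|_{\Omega_\eps}$ as a competitor:
\[
I_\eps(u_\eps)\leq I_\eps(u) = I(u) - \tfrac12\int_{B_\eps(\gamma)}|\nabla u|^2\dx - \int_{\partial B_\eps(\gamma)}u\cdot \S\nu_\eps\dH^2.
\]
The first correction vanishes by dominated convergence. For the second, integration by parts on the annulus $B_\eps(\gamma)\setminus B_\delta(\gamma)$ (using $\div \S = 0$ away from $\gamma$) together with $\S\in L^{3/2}_{\mathrm{loc}}$ reduces the question to the existence and value of $\lim_{\delta\to 0}\int_{\partial B_\delta(\gamma)}u\cdot \S\nu_\delta\dH^2$; using the cylindrical coordinates from Appendix~\ref{appendix embeddedness} and \eqref{eq: Sconvolution}, the leading behaviour of $\S$ on a thin normal disk around $\gamma(s)$ is of ``curl-type'' $\S\sim -\tfrac{1}{2\pi\delta}\,b\otimes(n\wedge\tau)$, and the pointwise identity $(n\wedge\tau)\cdot n\equiv 0$ yields the desired cancellation.

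For the lower bound I extend each $u_\eps$ to $\tilde u_\eps\in H^1(\Omega;\R^3)$ via the uniformly bounded extension operator from Appendix~\ref{appendix domain with hole}, normalize $\int_\Omega \tilde u_\eps = 0$, and combine the uniform Poincar\'e/trace constants on $\Omega_\eps$ with the identity above to obtain an $\eps$-uniform $H^1(\Omega)$-bound. Pass to a weakly convergent subsequence $\tilde u_\eps\wto \tilde u$ in $H^1(\Omega)$, with strong convergence in $L^2(\Omega)$ and $L^2(\partial\Omega)$. Lower semicontinuity with indicator weights $\mathbf{1}_{\Omega_\eps}\to 1$ a.e.\ gives $\liminf \int_{\Omega_\eps}|\nabla u_\eps|^2\geq \int_\Omega|\nabla\tilde u|^2$; the $\partial\Omega$-term converges by strong trace convergence; and $\int_{\partial B_\eps(\gamma)}u_\eps\cdot \S\nu_\eps \to 0$ follows from the decomposition $u_\eps = u + (u_\eps-u)$, by applying the cancellation argument from the upper bound to the $u$-piece and using the uniform trace control on $\partial B_\eps$ to handle the remainder. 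Thus $\liminf I_\eps(u_\eps)\geq I(\tilde u)\geq I(u)$; combined with the upper bound, $\tilde u$ minimizes $I$, forcing $\tilde u = u$ by uniqueness, and the whole family converges.

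Strong convergence in $H^1(\Omega';\R^3)$ for $\Omega'\cc\overline\Omega\setminus\gamma$ then follows from the fact that both $u_\eps$ (for $\eps$ small) and $u$ are harmonic on $\Omega'$, so weak $H^1$ convergence upgrades to strong via interior elliptic estimates. The main obstacle is the analysis of the singular boundary integral $\int_{\partial B_\eps(\gamma)}v\cdot\S\nu_\eps$: a brute-force bound $|\S|\cdot\H^2(\partial B_\eps)\sim 1$ is too weak, so one must exploit the curl-type cancellation on the normal cross-sections using the explicit representation of $\S$ and the geometric coordinates developed in Appendix~\ref{appendix embeddedness}; this is the unique ingredient beyond standard Neumann theory and the uniform functional-analytic estimates recorded in the appendices.
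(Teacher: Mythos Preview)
Your argument is correct and follows essentially the same route as the paper: uniform $H^1$ bounds via the trace/Poincar\'e constants of Appendix~\ref{appendix domain with hole}, extension to $\Omega$, weak compactness, and identification of the limit as the minimizer of $I$, with the crucial cancellation $(\tau\wedge\nu)\cdot\nu=0$ killing the leading $\eps^{-1}$ singularity of $\S\nu_\eps$. Two simplifications are worth noting. First, once you have observed that cancellation, it gives directly $\|\S\nu_\eps\|_{L^\infty(\partial B_\eps)}=O(|\log\eps|)$ and hence $\|\S\nu_\eps\|_{L^2(\partial B_\eps)}=O(\eps^{1/2}|\log\eps|)\to 0$ (this is the paper's Proposition~\ref{proposition boundary orthogonal}); then \emph{every} inner boundary term is handled by Cauchy--Schwarz and the uniform trace bound, so neither the annulus integration-by-parts for the $u$-piece nor the decomposition $u_\eps=u+(u_\eps-u)$ is needed. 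Second, for the strong $H^1(\Omega')$ convergence with $\Omega'\cc\overline\Omega\setminus\gamma$ allowed to touch $\partial\Omega$ (and $\Omega$ only Lipschitz), the paper avoids boundary regularity by arguing via energies: from $I_\eps(u_\eps)\to I(u)$ and convergence of the boundary terms one gets $\int_{\Omega_\eps}|\nabla u_\eps|^2\to\int_\Omega|\nabla u|^2$, and weak lower semicontinuity on $\Omega'$ and on $\Omega_\eps\setminus\Omega'$ then forces $\int_{\Omega'}|\nabla u_\eps|^2\to\int_{\Omega'}|\nabla u|^2$.
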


Before proving Theorem \ref{theorem boundary term limit}, we gather a few results.

\begin{Proposition}\label{proposition boundary orthogonal}
Assume that $\gamma \in C^{2}(S^1;\com{\Omega})$. Then
\[
\lim_{\eps \to 0} \| \S \nu_{\eps} \|_{L^2(\partial B_{\eps}(\gamma);\R^3)} \rightarrow 0.
\]
\end{Proposition}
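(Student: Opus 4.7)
Since $\S = b \otimes W$ where $W(x) := \int_\gamma k(x-y) \wedge \tau_y\, \dH^1_y$, one has $\S\nu_\eps = \langle W, \nu_\eps\rangle\, b$, so it suffices to show $\int_{\partial B_\eps(\gamma)} |\langle W, \nu_\eps\rangle|^2 \dH^2 \to 0$. The plan is to exhibit a pointwise cancellation of the leading singularity of $\langle W, \nu_\eps\rangle$ on $\partial B_\eps$, reducing matters to a bound with at most logarithmic blow-up, which is beaten by the $O(\eps)$ surface area of the tube. I would parametrize $\partial B_\eps(\gamma)$ using the cylindrical coordinates from Appendix \ref{appendix embeddedness},
\[
\Phi_\eps(s,\theta) := \gamma(s) + \eps\,\xi(s,\theta), \qquad \xi(s,\theta) := \cos\theta\, n_1(s) + \sin\theta\, n_2(s),
\]
for a smooth orthonormal normal frame $\{n_1,n_2\}$ along $\gamma$. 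A direct computation of $\partial_s\Phi_\eps \wedge \partial_\theta\Phi_\eps$ shows the area element is $\eps\,(1+O(\eps))\, \mathrm{d}s\, \mathrm{d}\theta$ and that the exterior unit normal satisfies $\nu_\eps(s,\theta) = \xi(s,\theta) + O(\eps)$ uniformly in $(s,\theta)$, provided $\eps$ is smaller than the embeddedness radius of $\gamma$.

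The key observation is a straight-line cancellation: if $\gamma$ were replaced by the full tangent line through $\gamma(s)$ with direction $\tau(s)$, the resulting integral yields $W_{\mathrm{str}}(x) = -\frac{\xi\wedge\tau(s)}{2\pi\eps}$ at $x = \Phi_\eps(s,\theta)$, whence $\langle W_{\mathrm{str}}, \xi\rangle = 0$ because $\xi\wedge\tau(s)\perp\xi$. To exploit this for the actual curve, I would decompose $W = W_{\mathrm{loc}} + W_{\mathrm{far}}$, where $W_{\mathrm{loc}}$ is the integral over the arc-length window $|t|<\delta$ near $s$, for some fixed $\delta$ smaller than the embeddedness radius, and $W_{\mathrm{far}}$ is the complement. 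The far part is uniformly bounded on $\partial B_\eps$ as soon as $\eps < \delta/2$. For the local part, I would insert the $C^2$-Taylor expansions
\[
\gamma(s+t) - \gamma(s) = t\tau(s) + \tfrac{t^2}{2}\gamma''(s) + O(t^3), \qquad \tau(s+t) = \tau(s) + t\gamma''(s) + O(t^2),
\]
and expand $r(t)\wedge\tau(s+t)$, with $r(t) := x - \gamma(s+t)$. Using $\xi\perp\tau(s)$, the triple products $\langle \xi\wedge\tau(s), \xi\rangle$ and $\langle \xi\wedge\gamma''(s), \xi\rangle$ both vanish, so the first surviving contribution to $\langle r(t)\wedge\tau(s+t), \xi\rangle$ is of order $t^2$, arising from $\tau(s)\wedge\gamma''(s)$.

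Dividing by $|r(t)|^3 \sim (\eps^2 + t^2)^{3/2}$ and integrating over $|t|<\delta$, the dominant contribution $\int_{-\delta}^\delta \frac{t^2}{(\eps^2+t^2)^{3/2}}\, \mathrm{d}t$ is $O(|\log\eps|)$, giving $|\langle W_{\mathrm{loc}}(x),\xi\rangle| \leq C\,(1+|\log\eps|)$. For the remaining piece, combine $|W(x)| \leq C/\eps$ on $\partial B_\eps$ (itself derived from the same straight-line comparison) with $|\nu_\eps - \xi| = O(\eps)$ to obtain $|\langle W, \nu_\eps - \xi\rangle| = O(1)$. Putting these together, $|\langle W, \nu_\eps\rangle| \leq C\,(1+|\log\eps|)$ uniformly on $\partial B_\eps(\gamma)$, and since $\H^2(\partial B_\eps(\gamma)) = O(\eps)$,
\[
\|\S\nu_\eps\|_{L^2(\partial B_\eps)}^2 \leq C\,\eps\,|\log\eps|^2 \longrightarrow 0.
\]

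The main obstacle will be the careful bookkeeping in the Taylor expansion of $r(t)\wedge\tau(s+t)$: one must identify precisely which cross-product terms vanish upon contraction with $\xi$ (thanks to $\xi\perp\tau(s)$ and $\tau\perp\gamma''$) and verify that the first non-vanishing contribution is indeed of order $t^2$, so that after division by $|r|^3$ it produces only logarithmic blow-up. A secondary technical point is controlling the discrepancy $\nu_\eps - \xi = O(\eps)$ sharply enough to exactly compensate the $1/\eps$ pointwise singularity of $W$.
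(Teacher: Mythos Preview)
Your proposal is correct and follows essentially the same route as the paper: exhibit the cancellation $\langle \tau\wedge\nu,\nu\rangle=0$ to kill the $1/\eps$ leading singularity of $W$, bound the surviving contribution by $O(|\log\eps|)$ via a second-order Taylor expansion and the integral $\int \frac{t^2}{(\eps^2+t^2)^{3/2}}\,\mathrm{d}t$, and conclude using $\H^2(\partial B_\eps)=O(\eps)$. The paper packages this into the full asymptotic expansion of $\S$ (Theorem \ref{thm: expansionK}) and then reads off the result, whereas you give a self-contained argument tailored to $\S\nu_\eps$.

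Two small corrections that do not affect your conclusion. First, for $\gamma\in C^2$ the Taylor remainders are $o(t^2)$ and $o(t)$, not $O(t^3)$ and $O(t^2)$; your argument still goes through because the surviving terms in $\langle r(t)\wedge\tau(s+t),\xi\rangle$ are then $O(t^2)+o(t^2)$ uniformly (with constants depending on $\|\gamma''\|_\infty$ and its modulus of continuity), which is all you need for the logarithmic bound. Second, in the adapted cylindrical coordinates the exterior normal is \emph{exactly} $\nu_\eps=\xi$, not merely $\xi+O(\eps)$ (one checks $\xi\perp\partial_s\Phi_\eps$ and $\xi\perp\partial_\theta\Phi_\eps$ directly), so your ``secondary technical point'' disappears altogether.
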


\begin{proof}
\com{We anticipate the results of Theorem \ref{thm: expansionK} where we obtain a rigorous asymptotic expansion of $\S$ \eqref{eq: expansionS}} which shows that 
\[
\|\S\nu_\eps\|_{L^\infty(\partial B_\eps(\gamma);\R^3)} = O(|\log\eps|).
\]
\com{The proof of the estimate is postponed until the next section. Let us emphasize that $\S = O(\eps^{-1})$ on $\partial B_\eps(\gamma)$ and that it is only the product $\S\nu_\eps$ which is bounded in $L^2(\partial B_\eps(\gamma))$. In addition, we claim that $\H^2(\partial B_\eps(\gamma)) = O(\eps)$.
This follows from the fact that $\gamma$ locally looks like a straight line and consequently $\partial B_\eps(\gamma)$ locally looks like a cylinder of radius $\eps$ -- in fact, more precisely $\lim_{\eps \to 0} \eps^{-1} \H^2(\partial B_\eps(\gamma)) = 2\pi\, \H^1(\gamma)$. \\
It follows that $\| \S \nu_{\eps} \|_{L^2(\partial B_{\eps}(\gamma);\R^3)} = O(\eps |\log \eps|)$.}
\end{proof}

\com{We will now argue that certain trace and extension constants on $\Omega_\eps$ are uniform in $\eps$. The key idea in proving the next two propositions is that, on a small scale, $\gamma$ looks like a straight line and $B_\eps(\gamma)$ looks like a cylinder. Proving the trace and extension result for a cylinder in three dimensions is very similar to proving them for a disk in two dimensions, where the Dirichlet energy is scale-invariant, leading to constants uniform in the radius of the cylinder.}

\com{Recall that the {\em embeddedness radius} of a curve $\gamma$ is in heuristic terms the largest radius for which the tubular neighborhood $\{\dist(\cdot,\gamma)< r\}$ is diffeomorphic to a torus -- see also Definition \ref{definition embeddedness radius} in the appendix.}

\begin{Proposition}\label{proposition uniform trace}
There exist $\eps_0>0$ \com{ and $C>0$} such that for all $0 < \eps < \eps_0$ the trace operator $H^1(\Omega_{\eps};\R^3) \rightarrow L^2(\partial B_{\eps}(\gamma);\R^3)$ satisfies
\[
\| u \|_{L^2(\partial B_{\eps}(\gamma);\R^3)} \leq C \| u \|_{H^1(\Omega_{\eps};\R^3)}.
\]
The constants $C$ and $\eps_0$ depend on the $C^2$-norm of $\gamma$ and its embeddedness radius.
\end{Proposition}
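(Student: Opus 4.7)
The plan is to reduce the estimate to a direct computation in Fermi (tubular) coordinates around $\gamma$, exploiting the fact that on a scale smaller than the embeddedness radius, $B_\eps(\gamma)$ looks like a product of $\gamma$ with a flat two-dimensional disk of radius $\eps$, on which the Dirichlet energy is scale invariant. Fix $R \in (0, r_0)$, where $r_0$ is the embeddedness radius of $\gamma$, and small enough that $B_R(\gamma) \cc \Omega$; this choice depends only on $\|\gamma\|_{C^2}$ and $r_0$, not on $\eps$. For $\eps < R$, the diffeomorphism $\Phi$ of Appendix \ref{appendix embeddedness} identifies the shell $B_R(\gamma) \setminus B_\eps(\gamma)$ with $S^1 \times A_{\eps,R}$, where $A_{\eps,R} = \{y \in \R^2 : \eps < |y| < R\}$, and the pull-back metric is uniformly bi-Lipschitz equivalent to the Euclidean product metric, with constants depending only on $\|\gamma\|_{C^2}$ and $r_0$.

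Using polar coordinates $y = (r\cos\theta, r\sin\theta)$ on the normal disks and writing $u(s,r,\theta)$ for the pullback of $u$, the fundamental theorem of calculus in $r$ gives
\[
u(s,\eps,\theta)^2 = u(s,R,\theta)^2 - 2\int_\eps^R u(s,r,\theta)\,\partial_r u(s,r,\theta)\dr.
\]
The surface measure on $\partial B_\eps(\gamma)$ pulls back to $\eps\bigl(1 + O(\eps)\bigr)\ds\,\d\theta$, so multiplying by $\eps$, integrating over $(s,\theta) \in S^1 \times [0, 2\pi]$, bounding $\eps \leq R$ on the boundary term and $\eps \leq r$ under the integral, and using $2|u\,\partial_r u| \leq u^2 + |\partial_r u|^2$, I obtain
\[
\|u\|_{L^2(\partial B_\eps(\gamma))}^2 \leq C_1 \|u\|_{L^2(\partial B_R(\gamma))}^2 + C_2 \int_{B_R(\gamma) \setminus B_\eps(\gamma)} \bigl(|u|^2 + |\nabla u|^2\bigr)\dx.
\]
The volume term is bounded by $C\|u\|_{H^1(\Omega_\eps)}^2$ since $B_R(\gamma) \setminus B_\eps(\gamma) \subseteq \Omega_\eps$, and the boundary term on the fixed $C^2$-surface $\partial B_R(\gamma)$ is controlled by $C_R \|u\|_{H^1(\Omega_\eps)}^2$ via the ordinary trace theorem applied, for example, on the fixed Lipschitz domain $\Omega \setminus B_R(\gamma) \subset \Omega_\eps$.

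The main obstacle is bookkeeping the geometric distortion: one must check that the Fermi coordinate change, the Jacobian, and the comparison between the Euclidean volume element $\dx$ and $r\,\dr\,\d\theta\,\ds$ all introduce only $\eps$-independent factors, depending on $\|\gamma\|_{C^2}$ and $r_0$ alone. This is essentially the content of Appendix \ref{appendix embeddedness}. Once $R$ is fixed in terms of $\gamma$ only, the constants $C_1, C_2, C_R$ above are independent of $\eps$, and the conclusion follows with $\eps_0 := R$.
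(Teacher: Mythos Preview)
Your proof is correct and follows essentially the same approach as the paper: work in the tubular (cylindrical) coordinates of Appendix \ref{appendix embeddedness}, apply the fundamental theorem of calculus in the radial direction, and exploit that the surface element on $\partial B_\eps(\gamma)$ scales like $\eps$ while the volume element on the shell scales like $r$, so $\eps/r \leq 1$ converts the line integrals back into the $H^1(\Omega_\eps)$-norm. The only tactical difference is that the paper multiplies $u$ by a radial cutoff supported in $\{R/3 \leq r \leq 2R/3\}$ so that the endpoint contribution at $r=R$ vanishes and the $|u|^2$-term enters directly as a volume integral, whereas you keep the boundary term at $\partial B_R(\gamma)$ and bound it afterward by the standard trace theorem on the fixed domain $\Omega\setminus B_R(\gamma)$; both devices are equivalent.
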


\com{Of course, since $\gamma$ does not touch $\partial\Omega$ and examining the local nature of the proof of trace inequalities, also the constants of the trace operator $H^1(\Omega_\eps) \to L^2(\partial\Omega)$ are uniform in $\eps<\eps_0$.}

\begin{Proposition}\label{proposition uniform extension}
There exist $\eps_0>0$ \com{and $C>0$} such that for all $0 < \eps < \eps_0$ there exists an extension operator $T_\eps: H^1(\Omega_{\eps};\R^3) \rightarrow H^1(\Omega ;\R^3)$ such that 
\[
\| T_\eps u \|_{H^1(\Omega;\R^3)} \leq C \| u \|_{H^1(\Omega_{\eps};\R^3)}
\]
for all $u\in H^1(\Omega_{\eps};\R^3)$. The constants $C$ and $\eps_0$ depend on the $C^2$-norm of $\gamma$ and its embeddedness radius.
\end{Proposition}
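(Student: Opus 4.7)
The argument should exploit the picture sketched in the preceding remark: near $\gamma$, in tubular coordinates, the set $\Omega_\eps$ is essentially $S^1\times(D_{r_0}\setminus\overline{D_\eps})$, where $D_r\subset\R^2$ is the disk of radius $r$. The missing tube $S^1\times D_\eps$ therefore has a \emph{two-dimensional} cross-section, and since the Dirichlet integral in two dimensions is conformally invariant, reflection across $\partial D_\eps$ in each cross-section gives an extension with $\eps$-uniform constants; a partition of unity glues this back to produce $T_\eps$.

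Fix $r_0$ strictly smaller than the embeddedness radius of $\gamma$ and let $\Psi:S^1\times D_{r_0}\to B_{r_0}(\gamma)$, $\Psi(s,y_1,y_2)=\gamma(s)+y_1\,n_1(s)+y_2\,n_2(s)$. For $\eps<r_0$, $\Psi$ is a $C^1$-diffeomorphism whose Jacobian and inverse are uniformly bounded in terms of $\|\gamma\|_{C^2}$ and the embeddedness radius, and it sends $S^1\times(D_{r_0}\setminus\overline{D_\eps})$ onto $B_{r_0}(\gamma)\cap\Omega_\eps$. Consequently $H^1$-norms transfer through $\Psi$ with equivalent constants. Take $\chi\in C_c^\infty(B_{r_0}(\gamma))$ with $\chi\equiv 1$ on $B_{r_0/2}(\gamma)$; for $u\in H^1(\Omega_\eps;\R^3)$, write $u=\chi u+(1-\chi)u$. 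The second piece extends by zero across the hole to an element of $H^1(\Omega;\R^3)$ with norm $\leq C\|u\|_{H^1(\Omega_\eps)}$, so it suffices to extend $v:=(\chi u)\circ\Psi$, a function in $H^1(S^1\times(D_{r_0}\setminus\overline{D_\eps});\R^3)$ supported away from the outer boundary, to $S^1\times D_{r_0}$.

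For the cross-sectional extension, let $\phi_\eps(y):=\eps^2\,y/|y|^2$ and pick a cutoff $\rho\in C^\infty([0,1])$ with $\rho\equiv 1$ on $[1/2,1]$ and $\rho\equiv 0$ on $[0,1/4]$. Define
\[
\tilde v(s,y):=\rho(|y|/\eps)\,v(s,\phi_\eps(y))\quad\text{for }0<|y|<\eps,
\]
and $\tilde v:=v$ for $\eps\leq|y|<r_0$. Since $\phi_\eps$ sends $\{\eps/4<|y|<\eps\}$ into $\{\eps<|z|<4\eps\}\subset D_{r_0}\setminus\overline{D_\eps}$ for $\eps$ small, $\tilde v$ is well-defined, and continuity across $\{|y|=\eps\}$ holds because $\rho(1)=1$ and $\phi_\eps(y)=y$ there. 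The essential two-dimensional identity is that $|D\phi_\eps(y)|_{\mathrm{op}}=\eps^2/|y|^2$ and $|\det D\phi_\eps(y)|=\eps^4/|y|^4$, so the cross-sectional change of variables $z=\phi_\eps(y)$ yields
\[
\int_{D_\eps}|\nabla_y(v\circ\phi_\eps)(s,y)|^2\,dy=\int_{\phi_\eps(D_\eps)}|\nabla_z v(s,z)|^2\,dz,
\]
i.e., reflection exactly preserves the cross-sectional Dirichlet energy (conformal invariance). The cutoff contributes an extra term bounded pointwise by $C\eps^{-1}\mathbf 1_{\{\eps/4<|y|<\eps/2\}}|v(s,\phi_\eps(y))|$, whose $L^2$-norm squared in the cross-section is absorbed by $\|v(s,\cdot)\|_{H^1(\{2\eps<|z|<4\eps\})}^2$ via a Poincar\'e inequality on this annulus with the correct $O(\eps)$ scaling.

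Finally, for the axial derivative $\partial_s\tilde v(s,y)=\rho(|y|/\eps)(\partial_s v)(s,\phi_\eps(y))$,
\[
\int_{S^1\times D_\eps}|\partial_s\tilde v|^2\,dy\,ds\leq\int_{S^1\times\{|z|>\eps\}}|\partial_s v|^2\,\frac{\eps^4}{|z|^4}\,dz\,ds\leq\|\partial_s v\|_{L^2(S^1\times(D_{r_0}\setminus D_\eps))}^2
\]
because $\eps^4/|z|^4\leq 1$ on $\{|z|\geq\eps\}$; the $L^2$-bound on $\tilde v$ is similar. Integrating over $s$, transferring back through $\Psi$, and recombining with $(1-\chi)u$ yields $T_\eps u\in H^1(\Omega;\R^3)$ with $\|T_\eps u\|_{H^1(\Omega)}\leq C\|u\|_{H^1(\Omega_\eps)}$, where $C$ depends only on $\|\gamma\|_{C^2}$, the embeddedness radius, and the cutoff $\chi$. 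The main obstacle is that the argument is genuinely two-dimensional: conformal invariance of the 2D Dirichlet energy is what makes reflection preserve the gradient exactly, and without it (e.g.\ for a tube with higher-dimensional cross-section) one would incur an $\eps$-dependent loss. The most delicate technical point is absorbing the cutoff error in a manner uniform in $\eps$, which relies on the Poincar\'e inequality with $O(\eps)$ scaling on the $\eps$-thick shell $\{2\eps<|z|<4\eps\}$.
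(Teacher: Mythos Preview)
Your overall strategy---localise near $\gamma$ with a cutoff $\chi$, pass to tubular coordinates, and extend across the hole by the radial inversion $\phi_\eps(y)=\eps^2 y/|y|^2$ in the two-dimensional cross-section---is exactly what the paper does, and your remark about conformal invariance correctly explains why the cross-sectional gradient term transforms with a uniform constant. The problem is the scale at which you place the second cutoff $\rho$.

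Because $\rho(|y|/\eps)$ lives at scale $\eps$, its gradient contributes, after squaring and changing variables $z=\phi_\eps(y)$, a term of the form
\[
\frac{C}{\eps^{2}}\int_{\{2\eps<|z|<4\eps\}}|v(s,z)|^{2}\,dz.
\]
Your claim that this is absorbed by $\|v(s,\cdot)\|_{H^1(\{2\eps<|z|<4\eps\})}^{2}$ via a Poincar\'e inequality with $O(\eps)$ scaling does not hold: the scaled Poincar\'e inequality on that annulus only controls the oscillation $\|v-\bar v\|_{L^2}^{2}\le C\eps^{2}\|\nabla v\|_{L^2}^{2}$, and there is no reason for the mean $\bar v$ to be $O(\eps)$. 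Even exploiting that $v$ vanishes near $|z|=r_0$, the sharp two-dimensional bound is the logarithmic Hardy inequality, which yields only
\[
\frac{1}{\eps^{2}}\int_{\{2\eps<|z|<4\eps\}}|v|^{2}\,dz \le C\,|\log\eps|^{2}\,\|\nabla v\|_{L^2}^{2},
\]
so the estimate is not uniform in $\eps$.

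The remedy, which is precisely the paper's choice, is to put the cutoff at the \emph{fixed} outer scale rather than at scale $\eps$: multiply $u$ by a cutoff $\eta$ with $|\eta'|\le C/R$ supported in $\{R/3<\dist(\cdot,\gamma)<2R/3\}$ \emph{before} reflecting, i.e.\ set $(T_\eps u)(\Psi(s,r,\theta))=(\eta u)(\Psi(s,\eps^2/r,\theta))$. The extension then vanishes for $r<c\,\eps^{2}/R$ (not $r<\eps/4$), and after the change of variables $r'=\eps^2/r$ the cutoff-derivative term localises to the \emph{fixed} annulus $\{R/3<r'<2R/3\}$ with a bounded coefficient---the factor $(\eps^2/r^2)^2$ from $|D\phi_\eps|^2$ cancels exactly against the Jacobian---so it is controlled by $\|u\|_{L^2(B_R(\gamma)\setminus B_\eps(\gamma))}^{2}$ with a constant depending only on $R$.
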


Similarly, if we remove a finite union of discs from a domain in $\R^2$, the Poincar{\'e}-Friedrich constant remains uniform -- see e.g.\ \cite[Proposition A.1]{MR2192291}. Also this transfers to our case.

\begin{Proposition}\label{proposition uniform poincare}
There exist $\eps_0>0$ and $C>0$ such that for all $0<\eps<\eps_0$ and $u\in H^1(\Omega_\eps;\R^3)$ the inequality
\[
\int_{\Omega_\eps} \left| u(x) - \frac{\int_{\Omega_\eps} u }{|\Omega_\eps|}\right|^2\dx \leq C \int_{\Omega_\eps}|\nabla u_\eps|^2\dx
\]
holds. The constants $C$ and $\eps_0$ depend on the $C^2$-norm of $\gamma$ and its embeddedness radius.
\end{Proposition}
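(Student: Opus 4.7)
The plan is to argue by contradiction, using the uniform extension operator from Proposition \ref{proposition uniform extension} to transfer the question to the fixed Lipschitz domain $\Omega$, where the Poincar\'e–Friedrichs inequality is classical.

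Assume the conclusion fails. Then I can extract sequences $\eps_k \to 0$ and $u_k \in H^1(\Omega_{\eps_k}; \R^3)$ which, after subtracting their respective means and normalizing, satisfy
\[
\int_{\Omega_{\eps_k}} u_k\dx = 0, \qquad \|u_k\|_{L^2(\Omega_{\eps_k})} = 1, \qquad \|\nabla u_k\|_{L^2(\Omega_{\eps_k})} \to 0.
\]
Applying Proposition \ref{proposition uniform extension} I extend each $u_k$ to $\tilde u_k := T_{\eps_k} u_k \in H^1(\Omega;\R^3)$ with $\|\tilde u_k\|_{H^1(\Omega)} \leq C\|u_k\|_{H^1(\Omega_{\eps_k})}$ uniformly in $k$. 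By the Rellich–Kondrachov theorem, a (non-relabeled) subsequence converges weakly in $H^1(\Omega;\R^3)$ and strongly in $L^2(\Omega;\R^3)$ to some limit $u$.

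Next I identify $u$ as zero. For every open $\Omega' \cc \Omega \setminus \gamma$, once $\eps_k$ is small enough we have $\Omega' \subset \Omega_{\eps_k}$ and $\tilde u_k = u_k$ on $\Omega'$, so weak lower semicontinuity of the $L^2$-norm yields
\[
\|\nabla u\|_{L^2(\Omega')} \;\leq\; \liminf_{k\to\infty} \|\nabla u_k\|_{L^2(\Omega_{\eps_k})} \;=\; 0.
\]
Exhausting $\Omega \setminus \gamma$ by such $\Omega'$ and using that $\gamma$ is Lebesgue-null in $\R^3$, I conclude $\nabla u = 0$ a.e.\ on $\Omega$. Since $\Omega$ is simply connected (hence connected), $u$ is a constant. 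The normalization $\int_{\Omega_{\eps_k}}\tilde u_k\dx = \int_{\Omega_{\eps_k}}u_k\dx = 0$ combined with $|\int_{B_{\eps_k}(\gamma)} \tilde u_k\dx| \leq |B_{\eps_k}(\gamma)|^{1/2}\|\tilde u_k\|_{L^2(\Omega)} \to 0$ forces $\int_\Omega u\dx = 0$, so this constant vanishes. Strong $L^2(\Omega)$-convergence then gives $\|u_k\|_{L^2(\Omega_{\eps_k})} \leq \|\tilde u_k\|_{L^2(\Omega)} \to 0$, contradicting $\|u_k\|_{L^2(\Omega_{\eps_k})} = 1$.

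The argument is essentially a direct corollary of Proposition \ref{proposition uniform extension}: once the uniform extension is in hand, the rest is a standard compactness argument and no genuinely new obstacle arises. The only delicate point to verify is that all quantitative dependencies—through the uniform extension constant and the threshold $\eps_0$—factor only through the $C^2$-norm and the embeddedness radius of $\gamma$, which they do because this is precisely how Proposition \ref{proposition uniform extension} is stated.
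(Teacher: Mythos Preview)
Your proof is correct and takes a genuinely different route from the paper's. The paper argues directly: it extends $u$ to $\Omega$ via $T_\eps$, applies the Poincar\'e inequality on the fixed domain $\Omega$, controls the mean of $T_\eps u$ over $\Omega$ by $|B_\eps(\gamma)|^{1/2}\|T_\eps u\|_{L^2(\Omega)} = O(\eps^{1/2})$, and then absorbs this small term into the left-hand side for $\eps$ below an explicit threshold. Your contradiction/compactness argument is cleaner and more robust, but the paper's direct approach has the advantage of producing an explicit constant, roughly $2C\big/\big(1 - C_\gamma^2\eps/|\Omega|\big)$ in terms of the extension constant and the Poincar\'e constant of $\Omega$; in particular the claimed dependence on the $C^2$-norm and embeddedness radius of $\gamma$ is visible rather than implicit. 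Your final paragraph correctly flags this point, but note that a contradiction argument for a fixed $\gamma$ does not by itself establish that $C$ and $\eps_0$ depend \emph{only} on the $C^2$-norm and embeddedness radius --- for that one would either let $\gamma$ vary within the contradiction setup over a class with uniform bounds, or fall back on a direct quantitative estimate as in the paper.
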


\com{The proofs of \ref{proposition uniform trace}, \ref{proposition uniform extension} and \ref{proposition uniform poincare} can be found in the appendix \ref{appendix domain with hole}. We are now ready to prove this section's main result.}

\begin{proof}[Proof of Theorem \ref{theorem boundary term limit}]
Let $u_{\eps}$ be \com{the} minimizer of $I_{\eps}$ satisfying $\int_{\Omega_{\eps}} u_{\eps} \, dx = 0$. \com{As $v=0$ is always a competitor for $I_{\eps}$, it follows that $\sup_{\eps} I_{\eps}(u_{\eps}) \leq 0$.}
Using Proposition \ref{proposition uniform trace}, we observe that 
\begin{align*}
I_\eps(u_\eps) &= \frac12\int_{\Omega_\eps} |\nabla u_\eps|^2 \dx  + \int_{\partial\Omega} \langle u_\eps, \S\nu\rangle \d\H^2 - \int_{\partial B_\eps(\gamma)} \langle u_\eps, \S\nu_\eps\rangle\d\H^2\\
	&\geq \frac12 \int_{\Omega_\eps} |\nabla u_\eps|^2 \dx - \|u_\eps\|_{L^2(\partial\Omega)}\,\|\S\nu\|_{L^2(\partial\Omega)} - \|u_\eps\|_{L^2(\partial B_\eps)} \|\S\nu_\eps\|_{L^2(\partial B_\eps)}\\
	&\geq \frac12 \|\nabla u_\eps\|_{L^2(\Omega_\eps)}^2 - C\left(\|\S\nu\|_{L^2(\partial\Omega)} + \|\S\nu_\eps\|_{L^2(\partial B_\eps(\gamma))}\right)\,\| u_\eps\|_{H^1(\Omega_\eps)}.
\end{align*}
Now, applying Propositions \ref{proposition uniform poincare} and \ref{proposition boundary orthogonal}, we \com{have fo some $c>0$ independent from $\varepsilon$}
\[
I_\eps(u_\eps) \geq \left[c\,\|u_\eps\|_{H^1(\Omega_\eps)} - C\left(\|\S\nu\|_{L^2(\partial\Omega)} + 1\right)\right]\,\|u_\eps\|_{H^1(\Omega_\eps)}
\]
from which we obtain a uniform bound on $\|u_\eps\|_{H^1(\Omega_\eps)}$ independent of $0<\eps<\eps_0$. Using Proposition \ref{proposition uniform extension}, we extend $u_\eps$ to $\Omega$ in such a way that
\[
\|T_\eps u_\eps\|_{H^1(\Omega;\R^3)} \leq C\,\|u_\eps\|_{H^1(\Omega_\eps; \R^3)}.
\]
Since the sequence $T_\eps u_\eps$ is uniformly bounded in $H^1(\Omega;\R^3)$, there exists a function $u\in H^1(\Omega;\R^3)$ such that \com{(up to a subsequence)} $T_\eps u_\eps \wto u$ in $H^1(\Omega;\R^3)$. In particular, $\int_{\Omega} u \, dx = 0$.

We will now show that $u$ minimizes $I$. \com{Arguing by contradiction, assume that} there exists $v\in H^1(\Omega)$ such that $\int_\Omega v \dx = 0$ and $I(v)< I(u)$. Using the strict inequality, we can find $r>0$ such that
\[
I(v) < \frac12 \int_{\Omega_r}|\nabla u|^2 \dx + \int_{\partial\Omega}\langle u, \S\nu\rangle \d\H^2
\]
\com{where in $I(u)$} we replace the whole domain $\Omega$ by the smaller set $\Omega_r$. \com{As $u_{\eps}$ minimizes $I_{\eps}$ for $0<\eps<\eps_0$, we have}
\begin{align*}
\com{\limsup_{\eps \to 0} I_{\eps}(u_{\eps})} \leq \lim_{\eps\to 0} I_\eps(v) &= I(v)\\
	&< \frac12 \int_{\Omega_r}|\nabla u|^2 \dx + \int_{\partial\Omega}\langle u, \S\nu\rangle \d\H^2\\
	&\leq \liminf_{\eps\to0} \left[\frac12 \int_{\Omega_r}|\nabla u_\eps |^2 \com{\dx}+ \int_{\partial\Omega}\langle u_\eps, \S\nu\rangle \d\H^2\right]\\
	&\leq \liminf_{\eps\to0} \left[\frac12 \int_{\Omega_\eps}|\nabla u_\eps |^2\com{\dx} + \int_{\partial\Omega}\langle u_\eps, \S\nu\rangle \d\H^2\right]\\
	&\leq \liminf_{\eps\to0} I_\eps(u_\eps).
\end{align*}
\com{Above we used the facts that $u_\eps\to u$ weakly in $H^1(\Omega_r)$, $u_\eps\to u$ strongly in $L^2(\partial\Omega)$, $\|\S\nu\|_{L^2(\partial B_\eps)} \to0$ by Proposition \ref{proposition boundary orthogonal}, and thus}
\[
\com{\left|\int_{\partial B_\eps}\langle u, \S\nu\rangle \d\H^2\right| \leq C\,\|u_\eps\|_{H^1(\Omega_\eps)}\,\|\S\nu\|_{L^2(\partial B_\eps)} \to 0}.
\]
\com{This contradiction allows us to deduce} that $u$ is a minimizer of $I$. Since $I$ is convex and coercive, the minimizer $u$ is unique and  satisfies the corresponding Euler-Lagrange equations
\[
\begin{pde}
-\Delta u & = 0 &\text{ in } \Omega, \\ \partial_{\nu} u &= - \S \nu &\text{ on } \partial \Omega.
\end{pde}
\]
The same proof as above shows that $I(u) = \lim_{\eps\to 0}I_\eps(u_\eps)$. \com{In particular we have} $\int_{\Omega'} |\nabla u_\eps|^2\, dx \to \int_{\Omega'} |\nabla u|^2 \, dx$ for all $\Omega'\cc\overline \Omega\setminus \gamma$, \com{thus implying the strong convergence of $u_{\eps} \to u$ in $H^1(\Omega';\R^3)$.}
\end{proof}

\com{In view of Theorem \ref{theorem boundary term limit} we define the effective energy of a dislocation by}
\[
\Eeff (\mu_{\gamma})= \min_{u\in H^1(\Omega)} \left[\int_{\Omega_{\eps}(\gamma)} |\S|^2 \dx + I(u)\right].
\]
\com{In the future, we will always assume that $\S$ is associated to $\gamma$ through \eqref{eq: Sconvolution} and  $u=u_\gamma$ is the unique minimizer with zero mean of $I$, so}
\begin{equation} \label{eq: def eff energy}
\Eeff (\mu)= \int_{\Omega_{\eps}(\gamma)} \frac12|\S|^2 \dx + I(u).
\end{equation}

\subsection{Variation of the effective energy}

In this section we derive an expression of the self-force as the (negative) quasi-static variation of the effective energy with respect to the dislocation curve $\gamma$. \com{Physically, this means that we consider variations of the energy associated to the equilibrium stress-field $\beta$ given a defect measure $\mu$, i.e.\ variations along which the relaxation to equilibrium of the crystal is much faster than the movement of the dislocation curve.}
For this we consider a closed $C^2$-curve $\varphi: S^1\rightarrow \R^3$ and the associated dislocation densities
\[
\mu_t := b \otimes \com{\tau_t} \, \mathcal{H}^1|_{\gamma_t},
\]
where $\gamma_t = (\gamma + t \varphi)$ and $\tau_{\com{t}}(\gamma(s) + t \varphi(s)) = \frac{\gamma'(s) + t \varphi'(s)}{|\gamma'(s) + t \varphi'(s)|}$. 
In addition, we denote the associated functions by $\S_t := \S_{\gamma_t}$ and $u_t := u_{\gamma_t}$. Note that due to the regularity of $\gamma$ and $\phi$, the domain
\[
\Omega_T := \bigcup_{t\in (-T,T)} \{t\}\times \big(\Omega\setminus \gamma_t\big)
\]
is open. In this section, the variable $t$ plays the role of the variation parameter, while below it will be used for physical time.

Our first result is for the domain-independent singular strain $\S$. The variation of $\S$ with respect to $\gamma$ changes the curl of $\S$ in such a gradual way that it does not appear in the derivative. \com{We show that} the derivative of $\S$ with respect to the curve $\gamma$ is given by the spatial gradient of a function which depends on the direction of the variation. 

\com{First, we prove the following simple lemma.}

\begin{Lemma}\label{lemma wedge stuff}
Let $U\subset\R^3$ be open and $f\in C^1(U,\R^3)$, $v, w\in \R^3$. Then
\begin{align*}
\left[D(f\wedge v)\right] w - \left[D(f\wedge w)\right] v 
	&= \nabla \left[(f\wedge v)\cdot w\right] - \div(f)\,v\wedge w\\
	&= - \nabla \left[(f\wedge w)\cdot v\right] - \div(f)\,v\wedge w\\
	&= \nabla \left[f\cdot (v\wedge w)\right] - \div(f)\,v\wedge w.
\end{align*}
\end{Lemma}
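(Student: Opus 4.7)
The plan is to reduce the first identity to a purely algebraic matrix identity on $3\times 3$ matrices, from which the second and third equalities follow as cosmetic rewritings.

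First, since $v, w$ are constant, differentiating $f \wedge v$ in direction $w$ gives $[D(f\wedge v)]w = ((Df)w) \wedge v$, and analogously for the other term. Using the scalar triple product $(f \wedge v) \cdot w = f \cdot (v \wedge w)$ together with the constancy of $v \wedge w$, the right-hand side becomes $(Df)^T(v \wedge w) - (\div f)(v \wedge w)$. Setting $A := Df$, so that $\tr A = \div f$, what remains to prove is the purely algebraic identity
\[
(Aw) \wedge v - (Av) \wedge w = A^T(v \wedge w) - (\tr A)(v \wedge w)
\]
valid for every $3 \times 3$ matrix $A$ and every $v, w \in \R^3$.

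For this I will invoke the classical trace-determinant relation
\[
\tr(A)\, \det(x, y, z) = \det(Ax, y, z) + \det(x, Ay, z) + \det(x, y, Az),
\]
which is obtained by differentiating the multiplicative identity $\det((I + tA)x, (I + tA)y, (I + tA)z) = \det(I + tA)\,\det(x, y, z)$ at $t = 0$. Rewriting $\det(a,b,c) = (a\wedge b) \cdot c$, using the adjoint relation $(a \wedge b) \cdot (Az) = \langle A^T(a \wedge b), z\rangle$, and specializing to $x = v$, $y = w$ with $z \in \R^3$ arbitrary, this yields
\[
(\tr A)(v \wedge w) = (Av) \wedge w + v \wedge (Aw) + A^T(v \wedge w),
\]
and antisymmetry of the cross product rearranges this to the desired algebraic identity.

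Finally, the second and third equalities stated in the lemma follow immediately from the pointwise scalar identities $(f \wedge v)\cdot w = -(f \wedge w) \cdot v = f \cdot (v \wedge w)$: since these are equal as scalar functions on $U$, their gradients coincide, while the term $-(\div f)(v \wedge w)$ is unchanged, so each of the three right-hand sides represents the same vector field. I do not expect any substantial obstacle beyond sign bookkeeping; the only genuine input is the trace-determinant identity, after which the proof is a short algebraic rearrangement.
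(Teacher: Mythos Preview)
Your proof is correct but takes a genuinely different route from the paper. The paper argues by bilinearity in $v,w$, reduces to the case $v=e_1$, $w=e_2$ (the case $v=w$ being trivial), and then verifies the identity by a direct coordinate computation of each side. You instead observe that, with $A:=Df$, both sides become pointwise algebraic expressions in $A,v,w$, and reduce the lemma to the matrix identity $(Aw)\wedge v-(Av)\wedge w=A^T(v\wedge w)-(\tr A)(v\wedge w)$, which you derive from the linearization of $\det(I+tA)$ at $t=0$. Your argument is cleaner and coordinate-free, and it makes transparent why the divergence (i.e.\ trace) appears; the paper's computation is more elementary in that it invokes nothing beyond explicit entrywise calculation, but it somewhat obscures the underlying linear-algebraic structure. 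The bookkeeping in your rearrangement step is correct: from $(\tr A)(v\wedge w)=(Av)\wedge w+v\wedge(Aw)+A^T(v\wedge w)$ one gets $A^T(v\wedge w)-(\tr A)(v\wedge w)=-(Av)\wedge w-v\wedge(Aw)=(Aw)\wedge v-(Av)\wedge w$, as required.
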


\begin{proof}
It suffices to satisfy the identity when $v, w\in \{e_1, e_2, e_3\}$ are basis vectors in $\R^3$ since both sides \com{of the string of equations} are bilinear \com{with respect to} the constant vectors. If $v=w$, both sides trivially reduce to \com{zero}, so without loss of generality, we may assume that $v=e_1$ and $w=e_2$. \com{We have}

\begin{tabular}{rll}
$f\wedge v$ & $= \begin{pmatrix} f_1\\ f_2\\ f_3\end{pmatrix} \wedge \begin{pmatrix} 1\\ 0 \\ 0\end{pmatrix}= \begin{pmatrix}0\\ f_3\\ -f_2\end{pmatrix}$\\
$\left[D(f\wedge v)\right]\cdot w$ &$= \begin{pmatrix} 0 &0&0\\ \partial_1f_3 &\partial_2 f_3&\partial_3f_3\\ -\partial_1f_2 & -\partial_2f_2 & -\partial_3f_2  \end{pmatrix} \begin{pmatrix} 0\\ 1\\ 0\end{pmatrix}
=\begin{pmatrix} 0\\ \partial_2f_3 \\ -\partial_2f_2\end{pmatrix}$\\
$\nabla \left[(f\wedge v)\cdot w\right]$ &$= \nabla\left[ \begin{pmatrix}0\\ f_3\\ -f_2 \end{pmatrix} \cdot \begin{pmatrix} 0\\ 1\\ 0 \end{pmatrix}\right] = \nabla f_3$.
\end{tabular}

\com{Interchanging the roles of $v$ and $w$, we obtain }
\begin{align*}
\left[D(f\wedge v)\right]\cdot w - \left[D(f\wedge w)\right] \cdot v &= \begin{pmatrix} \partial_1f_3\\ \partial_2 f_3\\ -\partial_1f_1 - \partial_2f_2\end{pmatrix} &&= \begin{pmatrix} \partial_1f_3\\ \partial_2 f_3\\ \partial_3f_3\end{pmatrix} - \begin{pmatrix}0 \\ 0 \\ \partial_1f_1 + \partial_2f_2 + \partial_3f_3\end{pmatrix}\\
	&= \nabla f_3 - \div(f)\,e_3 &&= \nabla (f\cdot(v\wedge w)) - \div(f)\,(v\wedge w).
\end{align*}
\com{The remaining equalities follow simply from $(f \wedge v) \cdot w = f\cdot(v\wedge w) = -(f\wedge w)\cdot v$}.
\end{proof}

\begin{Proposition}\label{prop: derivativeS}
Denote $\Omega_T := \bigcup_{t\in (-T,T)} \{t\}\times \big(\Omega\setminus \gamma_t\big)$. There exists $T >0$ such that the functions
\[
\S^\phi: \Omega_T \rightarrow \R^{3\times 3}, \quad \S^\phi(t,x) = \S_t(x) 
\]
and
\[
u^T: (-T,T) \to H^1(\Omega;\R^{3\times 3}\com{)}, \quad u^T(t) = u_t
\]
are $C^1$-smooth.
\com{We have the explicit formula}
\[
\dot\S^\phi(x) := \frac{d}{dt}\bigg|_{t=0} \S_t(x) =   -b\otimes \nabla w^\phi(x), \qquad w^\phi(x) := \int_{\gamma}  \left\langle k(x-y) \wedge \tau_y, \: \varphi(\gamma^{-1}(y)) \right\rangle \, d\mathcal{H}^1_y,
\]
\com{where $k$ is the same as in \eqref{eq: Sconvolution}.}
Moreover, $\dot u^\phi := \frac {d}{dt}\big|_{t=0}u^T$ satisfies
\[
\begin{pde} \label{eq: NeumannDerivative}
 -\Delta \dot u^\phi  &= 0 &\text{ in } \Omega,\\
 \com{\partial_{\nu}} \dot u^\phi &= -\dot \S^\phi \nu& \text{ on } \partial \Omega.
\end{pde}
\]
\end{Proposition}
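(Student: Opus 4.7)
The plan is to establish smoothness of $\S^\phi$ directly from the parametric integral representation in \eqref{eq: Sconvolution}, derive the formula for $\dot\S^\phi$ by differentiating under the integral and applying Lemma \ref{lemma wedge stuff}, and then obtain $C^1$-regularity of $u^T$ from the linearity and continuity of the Neumann solution operator.

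First I would parametrize $\gamma_t(s)=\gamma(s)+t\phi(s)$ so that $\tau_y^t\,\d\H^1_y=\gamma_t'(s)\d s$ and
\[
\S_t(x)=b\otimes \int_{S^1} k\bigl(x-\gamma(s)-t\phi(s)\bigr)\wedge \bigl(\gamma'(s)+t\phi'(s)\bigr)\d s.
\]
Since $\gamma$ is embedded and, by Appendix \ref{appendix embeddedness}, the embeddedness radius is lower-semicontinuous under $C^{2}$-perturbations, for $T>0$ small enough the set $\Omega_T$ is open, and on any compact subset of $\Omega_T$ the integrand is jointly smooth in $(t,x,s)$ with all derivatives controlled by a dominating function. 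A standard dominated-convergence argument then yields $\S^\phi\in C^1(\Omega_T;\R^{3\times 3})$ and, at $t=0$,
\[
\dot\S^\phi(x)=b\otimes \int_{S^1}\Bigl\{-\bigl(Dk(x-\gamma(s))\,\phi(s)\bigr)\wedge \gamma'(s)+k(x-\gamma(s))\wedge \phi'(s)\Bigr\}\d s.
\]

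Next I would apply Lemma \ref{lemma wedge stuff} with $f=k$, $v=\phi(s)$, $w=\gamma'(s)$ at the point $x-\gamma(s)$. Since $k=\nabla G$ and $G$ is harmonic away from the origin, $\div k=0$ on $\R^3\setminus\{0\}$, so the lemma reduces to
\[
-\bigl(Dk\cdot \phi(s)\bigr)\wedge \gamma'(s)=-\bigl(Dk\cdot \gamma'(s)\bigr)\wedge \phi(s)-\nabla_x\langle k\wedge \gamma'(s),\phi(s)\rangle.
\]
Substituting this into the previous display, the gradient term integrates to $-\nabla_x w^\phi(x)$ (interchange of $\nabla_x$ with the $s$-integral is justified by the same dominated-convergence argument), while the remaining integrand is recognized as the total $s$-derivative
\[
\frac{d}{ds}\bigl[k(x-\gamma(s))\wedge \phi(s)\bigr]=-\bigl(Dk(x-\gamma(s))\gamma'(s)\bigr)\wedge \phi(s)+k(x-\gamma(s))\wedge \phi'(s),
\]
whose integral over the closed curve $S^1$ vanishes. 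This yields $\dot\S^\phi=-b\otimes \nabla w^\phi$.

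For the $C^1$-regularity of $u^T$, note that since $\gamma_t\cc\Omega$ for $|t|<T$ small, the map $t\mapsto \S_t\nu|_{\partial\Omega}$ is $C^1$ into $L^2(\partial\Omega;\R^3)$ with derivative $\dot\S^\phi\nu|_{\partial\Omega}$, because $\S_t$ is smooth on a neighborhood of $\partial\Omega$ uniformly in $t$. The zero-mean Neumann solver $g\mapsto u_g$ for $-\Delta u=0$, $\partial_\nu u=g$ is linear and continuous from $L^2(\partial\Omega;\R^3)$ to $H^1(\Omega;\R^3)$ by the estimate \eqref{eq: Neumann}, so the composition $u^T(t)=u_{-\S_t\nu}$ is $C^1$ into $H^1(\Omega;\R^3)$ and $\dot u^\phi$ solves the linearized Neumann problem \eqref{eq: NeumannDerivative} by linearity. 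The main obstacle I anticipate is justifying differentiation under the $s$-integral uniformly in $(t,x)$: although $\Omega_T$ excludes each $\gamma_t$, the kernel $Dk$ has a $|z|^{-3}$ singularity at the origin, so one needs a dominating function depending on the distance from $(t_0,x_0)$ to $\bigcup_t\{t\}\times\gamma_t$, which in turn reduces to the continuity of that distance under $C^2$-perturbations of $\gamma$ afforded by the uniform embeddedness-radius bound.
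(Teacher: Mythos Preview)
Your argument is correct and essentially the same as the paper's. For the formula $\dot\S^\phi=-b\otimes\nabla w^\phi$ both proofs use Lemma~\ref{lemma wedge stuff} together with $\div k=0$ and the closedness of $\gamma$; you apply the lemma first and then recognize the remainder as a total $s$-derivative, whereas the paper integrates $k\wedge\phi'$ by parts first and then applies the lemma---the two orderings are algebraically equivalent. For $u^T$, your route via the boundedness of the linear Neumann solver (estimate~\eqref{eq: Neumann}) composed with the $C^1$-map $t\mapsto -\S_t\nu|_{\partial\Omega}$ is slightly more direct than the paper's, which instead bounds the difference quotients, extracts an $H^1$-weak limit, identifies it by uniqueness of the linearized Neumann problem, and then upgrades to strong convergence by an energy argument; your shortcut buys brevity, while the paper's argument makes the strong $H^1$-convergence of the difference quotients explicit without invoking continuity of the solution operator.
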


\begin{proof}
\com{In view of \eqref{eq: Sconvolution} it holds}
\begin{align*}
\S_t(x) &= b\otimes \int_0^1 k\left(x- \left(\gamma(s) + t\varphi(s)\right)\right) \wedge \left( \gamma'(s) + t\varphi'(s)\right)\d s.
\end{align*}
\com{We remark that for every vector field $f: \Omega \to \R^3$ and vectors $a,b \in \R^3$ it holds for every $x \in \Omega$
\begin{equation} \label{eq: derivative and wedge}
(Df(x)\, a )\wedge b = D(f\wedge b)(x) \,a.
\end{equation}
Using Lemma \ref{lemma wedge stuff} and \eqref{eq: derivative and wedge}, we compute}
\begin{align*} 
\com{\dot S^\phi(x)} &= \com{\frac{d}{dt}\bigg|_{t=0}\S_t(x)}\\
	 &= \com{b\otimes \int_0^1 \left(\left[Dk(x-\gamma(s))(-\varphi(s))\right]\wedge \gamma'(s) + k(x-\gamma(s))\wedge \varphi'(s)\right)\d s}\\
	&=  \com{b\otimes \int_0^1 \left(\left[Dk(x-\gamma(s))(-\varphi(s))\right]\wedge \gamma'(s) - \left[ Dk(x-\gamma(s))(-\gamma'(s))\right] \wedge \varphi(s)\right)\d s}\\
	&= \com{b\otimes \int_0^1 \left(\left[ Dk(x-\gamma(s))\gamma'(s)\right] \wedge \varphi(s) - \left[Dk(x-\gamma(s))\varphi(s)\right]\wedge \gamma'(s) \right)\d s} \\
	&= \com{b\otimes \int_0^1 D_x\left[k(x-\gamma(s))\wedge \varphi(s)\right] \gamma'(s) - D_x\left[k(x-\gamma(s)) \wedge \gamma'(s)\right]\cdot\varphi(s)\d s}\\
	&=\com{ b\otimes \int_0^1 \nabla_x \left[k(x-\gamma(s))\cdot \left( \varphi(s) \wedge \gamma'(s)\right) \right] - \left(\varphi(s) \wedge \gamma'(s)\right) \, \div_x(k(x-\gamma(s))\d s}\\
	&= \com{b\otimes \nabla_x \int_0^1 \left[k(x-\gamma(s)) \wedge \varphi(s)\right]\cdot\gamma'(s)\d s} \\
	&=\com{b\otimes \nabla_x \int_{\gamma} \left[ k(x-y) \wedge \varphi(\gamma^{-1}(y)) \right] \cdot \tau(y) \, d\mathcal{H}^1(y)},
\end{align*}
\com{where} we used the fact that $k$ is the gradient of the Newtonian kernel and thus divergence free away from its singularity.

\com{Let us fix $t \in (-T,T)$ and $t' \to t$. In view of formula \eqref{eq: Sconvolution}, we find that the difference quotients $\frac{\S_t - \S_{t'}}{t-t'}$ are uniformly bounded in space (again, away from $\gamma$)}. Since by \com{Theorem \ref{theorem boundary term limit}} $\frac{u_t - u_{t'}}{t-t'}$ solves the Neumann problem
\[
\begin{pde}
 - \Delta \left(\frac{u_t - u_{t'}}{t-t'}\right) &=0 &\text{in }\Omega\\
 \partial_\nu \left(\frac{u_t - u_{t'}}{t-t'}\right) &= - \left(\frac{\S_t - \S_{t'}}{t-t'}\right)\nu &\text{on }\partial\Omega
 \end{pde}
\]
we have
\[
\left\|\frac{u_t - u_{t'}}{t-t'}\right\|_{H^1(\Omega)} \leq C_\Omega\,\left\|\frac{\S_t - \S_{t'}}{t-t'}\right\|_{L^2(\partial\Omega)}
\]
such that the difference quotients $\frac{u_t - u_{t'}}{t-t'}$ have an $H^1$-weakly convergent subsequence with a limit $\dot u\in H^1(\Omega)$. \com{Now, fix $\varphi \in C^{\infty}(\overline{\Omega})$.} By weak convergence, we can pass to the limit on the left side of the equation
\[
\int_\Omega \left\langle \nabla \frac{u_t - u_{t'}}{t-t'}, \nabla \phi\right\rangle \dx = \int_{\partial\Omega} \left\langle \frac{\S_t - \S_{t'}}{t-t'}, \phi\right\rangle\d\H^{2}
\] 
and \com{since $t\to S_t(x)$ is differentiable for every $x \in \partial \Omega$}, we \com{can use the dominated convergence theorem to} pass to the limit on the right hand side as well. \com{We deduce that $\dot{u}$ has zero average and satisfies \eqref{eq: NeumannDerivative}. As the solution to \eqref{eq: NeumannDerivative} is unique (up to the addition of a constant) the limit $\dot{u}$ does not depend on the subsequence and thus $\frac{u_t - u_{t'}}{t-t'} \rightharpoonup \dot{u}$. This shows that $u^T$ is differentiable in time with values in $H^1(\Omega;\R^3)$ (equipped with the weak topology)}.

\com{Finally, we show that $u^T$ is also differentiable in time with values in $H^1(\Omega;\R^3)$ (equipped with the strong topology.} Similarly to the proof of Theorem \ref{theorem boundary term limit} we find that
\[
\int_\Omega \left|\nabla \frac{u_t - u_{0}}{t-0}\right|^2 \dx \to \int_\Omega |\nabla \dot u|^2\dx
\]
since if
\[
\int_\Omega |\nabla \dot u|^2\dx < \liminf_{t\to 0} \int_\Omega \left|\nabla \frac{u_t - u_{0}}{t}\right|^2 \dx 
\]
we could show that
\[
\int_\Omega |\nabla \dot u|^2\dx - \int_{\partial\Omega} \left\langle\dot u, \frac{\S_t(x) - \S_{0}(x)}{t}\right\rangle\d\H^2 < \int_\Omega \left|\nabla \frac{u_t - u_{0}}{t}\right|^2\dx - \int_{\partial\Omega} \left\langle \nabla \frac{u_t - u_{0}}{t},  \frac{\S_t(x) - \S_{0}(x)}{t}\right\rangle \d\H^2
\]
for some small $t$ since $H^1(\Omega)$ embeds compactly in $L^2(\partial\Omega)$. This would contradict the characterization of solutions to the Neumann problem as minimizers of an energy functional. It follows that the difference quotients converge strongly in $H^1$ and thus that $u^T$ is differentiable with values in $H^1$.
\end{proof}

The following Proposition is the main result of this section.

\begin{Proposition}\label{prop: variation}
We have
\[
\frac{\d \Eeff(\mu_t)}{dt}\bigg|_{t=0}= -\int_{\partial B_\eps(\gamma)}\frac12\,|\S|^2 \, \phi \cdot \nu_\eps - w^{\phi}\, b\cdot (\S+\nabla u)\nu_\eps -  u \cdot \dot{\S}^\phi\nu_\eps\d\H^2.
\]
\end{Proposition}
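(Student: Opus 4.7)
The plan is to differentiate $\Eeff(\mu_t) = A(t) + B(t)$ term by term at $t=0$, where
\[
A(t) := \int_{\Omega_\eps(\gamma_t)} \tfrac12 |\S_t|^2\dx, \quad B(t) := I_t(u_t) = \int_\Omega \tfrac12|\nabla u_t|^2\dx + \int_{\partial\Omega} u_t\cdot \S_t\nu\d\H^2.
\]
For $A$, the domain moves with $t$, so I apply Reynolds' transport theorem. The key geometric observation is that for a point $x = \gamma(s_0) + \eps\,\nu_\eps(x) \in \partial B_\eps(\gamma)$, a Taylor expansion yields $\dist(x,\gamma_t) = \eps - t\,\phi(s_0)\cdot\nu_\eps(x) + O(t^2)$, so $\partial B_\eps(\gamma_t)$ moves outward with normal velocity $\phi\cdot\nu_\eps$. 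Reynolds then gives
\[
A'(0) = \int_{\Omega_\eps}\langle \S, \dot\S^\phi\rangle\dx - \int_{\partial B_\eps}\tfrac12 |\S|^2\,\phi\cdot\nu_\eps\d\H^2 =: T_1 + T_2.
\]
For $B$, I invoke the envelope theorem: since $u_t$ is the zero-mean minimizer of the convex functional $I_t$ and $\dot u^\phi$ has zero mean, the Euler-Lagrange equation for $u_t$ makes the inner variation $\partial_u I_t(u_t)[\dot u^\phi]$ vanish, leaving only the explicit dependence on $\S_t$ in the boundary term:
\[
B'(0) = \int_{\partial\Omega} u\cdot\dot\S^\phi\nu\d\H^2 =: T_3.
\]

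Next, I transform $T_1$ and $T_3$ into integrals localised on $\partial B_\eps$. For $T_1$, the formula $\dot\S^\phi = -b\otimes\nabla w^\phi$ from Proposition \ref{prop: derivativeS} together with $\div\S = 0$ on $\Omega_\eps$ yields $\div(w^\phi\,\S^Tb) = -\langle\S,\dot\S^\phi\rangle$, so the divergence theorem gives
\[
T_1 = -\int_{\partial\Omega} w^\phi\,b\cdot\S\nu\d\H^2 + \int_{\partial B_\eps} w^\phi\,b\cdot\S\nu_\eps\d\H^2.
\]
For $T_3$, I first rewrite $u\cdot\dot\S^\phi\nu = -(u\cdot b)\,\partial_\nu w^\phi$ and then apply Green's second identity on $\Omega_\eps$ to the pair $(u\cdot b)$ and $w^\phi$, both of which are harmonic there: $\Delta u = 0$ in $\Omega$ by Theorem \ref{theorem boundary term limit}, and $\Delta w^\phi = 0$ on $\Omega\setminus\gamma$ since $k = \nabla G$ is divergence-free away from its singularity. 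Substituting the Neumann condition $\partial_\nu u = -\S\nu$ on $\partial\Omega$ (which implies $\partial_\nu(u\cdot b) = -b\cdot\S\nu$) produces
\[
T_3 = \int_{\partial\Omega} w^\phi\,b\cdot\S\nu\d\H^2 - \int_{\partial B_\eps}(u\cdot b)\,\partial_{\nu_\eps} w^\phi\d\H^2 + \int_{\partial B_\eps} w^\phi\,b\cdot\partial_{\nu_\eps}u\d\H^2.
\]

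Summing $T_1 + T_3$, the $\partial\Omega$ integrals cancel and the surviving $\partial B_\eps$ contributions rearrange, using the identities $b\cdot(\nabla u)\nu_\eps = \partial_{\nu_\eps}(u\cdot b)$ and $(u\cdot b)\,\partial_{\nu_\eps} w^\phi = -u\cdot\dot\S^\phi\nu_\eps$ (direct consequences of $\dot\S^\phi = -b\otimes\nabla w^\phi$), into
\[
T_1 + T_3 = \int_{\partial B_\eps} w^\phi\,b\cdot(\S + \nabla u)\nu_\eps\d\H^2 + \int_{\partial B_\eps} u\cdot\dot\S^\phi\nu_\eps\d\H^2.
\]
Adding $T_2$ and collecting the common minus sign gives the stated formula. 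The chief technical obstacle is the rigorous justification of Reynolds' theorem for the moving domain $\Omega_\eps(\gamma_t)$, which requires smooth $t$-dependence of the tubular-neighbourhood diffeomorphism; this follows from the $C^2$-regularity of $\gamma$ and $\phi$ together with a lower bound on the embeddedness radius uniform in small $t$. All integrations by parts are performed on $\Omega_\eps$, where $\S$, $w^\phi$, and $u$ are smooth, so no delicate passage to the limit near $\gamma$ is required.
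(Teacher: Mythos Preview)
Your proof is correct and follows essentially the same route as the paper: Reynolds' transport theorem for the moving domain $\Omega_\eps(\gamma_t)$, the identity $\dot\S^\phi=-b\otimes\nabla w^\phi$, and integration by parts on $\Omega_\eps$ using $\div\S=0$, $\Delta u=0$, $\Delta w^\phi=0$ and the Neumann condition $\partial_\nu u=-\S\nu$ on $\partial\Omega$. Your packaging via the envelope theorem (to dispose of the $\dot u^\phi$ contribution) and Green's second identity (in place of two successive integrations by parts) is a slightly tidier presentation of exactly the same computation the paper carries out explicitly.
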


\begin{proof}
\com{We use the Reynolds transport theorem \cite[Equation (15.23)]{antman} for $\Omega_{\eps}(\gamma+ t \varphi)$. The corresponding velocity at $t=0$ on $\partial B_{\eps}(\gamma)$ is given by $\varphi$.
We obtain }

\begin{align*}
\frac{\d\E_\eps(\mu_t)}{\dt} = &\frac{\d}{\d t}_{|t=0} \left[ \int_{\Omega_{\eps}(\gamma + t \varphi)} \frac12 |\S|^2 \d x + \int_{\Omega} \frac12 |\nabla u|^2 \d x + \int_{\partial \Omega} u\cdot \S\nu \d \H^2  \right]  \\
= &\com{-\int_{\partial B_{\eps}(\gamma)} \frac12 |\S|^2 (\varphi \cdot \nu_{\eps}) \d \H^2 + \int_{\Omega_{\eps}(\gamma)} {S} : \dot{\S}^{\varphi} \d x + \int_{\Omega} \nabla u: \nabla \dot{u}^{\varphi} \d x} \\
&\com{+ \int_{\partial \Omega} \dot{u}^{\varphi} \cdot \S\nu + u \cdot \dot{\S}^{\varphi} \nu \d \H^2} \\
= &\com{-\int_{\partial B_{\eps}(\gamma)} \frac12 |\S|^2 (\varphi \cdot \nu_{\eps}) \d \H^2 - \int_{\Omega_{\eps}(\gamma)} {S} :(b \otimes \nabla w^{\varphi}) \d x }\\
&\com{+ \int_{\partial \Omega} \left(\dot{u}^{\varphi} \cdot \partial_{\nu} u+ \dot{u}^{\varphi} \cdot \S\nu + u \cdot \dot{\S}^{\varphi} \nu\right) \d \H^2 }\\
=&\com{\int_{\partial B_{\eps}(\gamma)} \left(-\frac12 |\S|^2 (\varphi \cdot \nu_{\eps}) + w^{\varphi} b\cdot \S \nu_{\eps} \right) \d \H^2 -  \int_{\partial \Omega} \left( w^{\varphi}b \cdot \S \nu + u \cdot \dot{\S}^{\varphi} \nu\right) \d \H^2. }
\end{align*}
\com{Here, we used that $\div \S = \Delta u = 0$, $\partial_{\nu} u = -\S \nu$ on $\partial \Omega$ and the fact that the outer normal of $\partial \Omega_{\eps}$ on $\partial B_{\eps}(\gamma)$ is given by the negative of the outer normal $\nu_{\eps}$ of $\partial B_{\eps}(\gamma)$. Now, we notice that}
\begin{align*}
-\int_{\partial \Omega} w^{\varphi}b \cdot \S \nu \d \H^2 = &\com{\int_{\partial \Omega} w^{\varphi} b \cdot \partial_{\nu} u \d \H^2 }\\
= &\com{\int_{\Omega_{\eps}} \left(\Delta u \cdot bw^{\varphi} + \nabla u : b\otimes \nabla w^{\varphi}\right) \d x + \int_{\partial B_{\eps}(\gamma)} w b \cdot \partial_{\nu_{\eps}} u \d \H^2 }\\
= &\com{-\int_{\Omega_{\eps}} u \cdot b \Delta w \d x + \int_{\partial B_{\eps}(\gamma)}  \left(w b \cdot \partial_{\nu_{\eps}} u - u \cdot(b\otimes \nabla w^{\varphi}) \nu_{\eps}\right)\d \H^2} \\ 
&\com{+ \int_{\partial \Omega} u \cdot (b\otimes \nabla w^{\varphi}) \nu \d \H^2}\\
=&\com{\int_{\partial B_{\eps}(\gamma)} \left(w b \cdot \nabla u \nu_{\eps} - u \cdot\dot{\S}^{\varphi} \nu_{\eps}\right)\d \H^2 + \int_{\partial \Omega} u \cdot \dot{\S}^{\varphi} \nu \d \H^2.}
\end{align*}
\com{Consequently, we find that}
\begin{align*}
\com{\frac{\d\E_\eps(\mu_t)}{\dt} = \int_{\partial B_{\eps}(\gamma)} \left(-\frac12 |\S|^2 (\varphi \cdot \nu_{\eps}) + w^{\varphi} b\cdot \left(\S+\nabla u\right) \nu_{\eps} - u\cdot \dot{\S}^{\varphi} \nu_{\eps}\right) \d \H^2.}
\end{align*}
\end{proof}

\section{Asymptotic expansions}\label{section asymptotics}

In this chapter we obtain rigorous asymptotic expansions for the singular strain $\mathcal{S}$, \com{see \eqref{eq: Sconvolution}},  and the Peach-K\"{o}hler force. 

\subsection{The singular strain}

Denote \com{by $r>0$} the embeddedness radius of $\gamma$. \com{We} recall the following.
\begin{enumerate}
\item $\pi:B_r(\gamma)\to \gamma$ \com{is} the closest point projection, which is well-defined and $C^{1}$-smooth ($C^{1,\alpha}$ if $\gamma \in C^{2,\alpha}$),
\item $\dist(x,\gamma)$ is $C^{2}$-smooth \com{in} $B_r(\gamma)$ ($C^{2,\alpha}$ smooth if $\gamma\in C^{2,\alpha}$),
\item $\nu := \nabla \dist(\cdot,\gamma)$ denotes the exterior normal field to the tubular neighborhood $B_{\rho}(\gamma)$ for $0<\rho<r$.
\item $\vv H:\gamma\to\R^3$ \com{stands for} the curvature vector field of $\gamma$, i.e.\ if $\gamma$ is parametrized by unit length then  $\vv H_{\gamma(s)} = \gamma''(s)$.
\end{enumerate}

\begin{Theorem}\label{thm: expansionK}
Let $\gamma \in C^2(S^1; \R^3)$ and $x\in B_r(\gamma)$. Then 
\begin{equation} \label{eq: expansionS}
\S(x) = \frac1{2\pi} \,b \otimes \left(\frac1{\dist(x,\gamma)}\, \tau_{\pi(x)}\wedge \nu_x + \frac12|\log\dist(x,\gamma)|\, \tau_{\pi(x)}\wedge  \vv H_{\pi(x)} + R(x) \right)
\end{equation}
where $R(x)$ satisfies
\begin{enumerate}
\item $\|R\|_{L^\infty(\partial B_\eps(\gamma))} \leq C\big(1+ \|\gamma\|_{C^2}^2 + |\log\eps|\big)$ for some $C$ which depends on $\gamma$ only through the embeddedness radius,

\item $\lim_{\eps\to 0} \frac{\| R\|_{L^\infty(\partial B_\eps(\gamma))}}{|\log\eps|}=0$. This estimate is not uniform in $C^2$ and depends on the modulus of continuity of $\gamma''$,

\item $\|R\|_{L^\infty(\partial B_\eps(\gamma))} \leq C\left(1+ \|\gamma\|_{C^2}^2 + \|\gamma\|_{C^{2,\alpha}}\right)$ for some $C$ if $\gamma$ is $C^{2,\alpha}$-smooth. The constant $C$ depends on $\gamma$ only through the embeddedness radius.
\end{enumerate}
\end{Theorem}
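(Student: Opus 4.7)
The starting point is the convolution formula
\[
\S(x) \;=\; b\otimes \int_\gamma k(x-y)\wedge \tau_y \,\d\H^1_y, \qquad k(z)= -\frac{z}{4\pi|z|^3},
\]
from \eqref{eq: Sconvolution}. Fix $x\in B_r(\gamma)$, set $y_0=\pi(x)$, $d=\dist(x,\gamma)$, and let $\nu_x=(x-y_0)/d$. The plan is to split the integral into a local piece and a far piece, then Taylor-expand the local piece to second order around $y_0$ and read off the first two asymptotic terms.

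First, I would fix a length scale $\delta>0$ below the embeddedness radius and split $\gamma=\gamma^{\mathrm{loc}}\cup\gamma^{\mathrm{far}}$ where $\gamma^{\mathrm{loc}}$ is the arclength-parametrized portion with $|s|\le\delta$ around $y_0=\gamma(0)$. On $\gamma^{\mathrm{far}}$, the integrand $k(x-y)\wedge\tau_y$ is uniformly smooth with a bound depending only on $\delta$ (hence on the embeddedness radius), so that piece contributes a bounded $R$-term. For the local piece I would use the second-order Taylor expansion
\[
\gamma(s) \;=\; y_0 + s\tau_0 + \tfrac{s^2}{2}\,\vv H_0 + \rho(s), \qquad \tau(s) \;=\; \tau_0 + s\,\vv H_0 + \tilde\rho(s),
\]
where $\tau_0=\tau_{y_0}$, $\vv H_0=\vv H_{y_0}$, and $\rho(s)=o(s^2)$, $\tilde\rho(s)=o(s)$ uniformly (with Hölder improvement in the $C^{2,\alpha}$ case). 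I would then exploit the two key orthogonalities: $\nu_x\perp\tau_0$ (because $y_0$ is the closest point) and $\tau_0\perp\vv H_0$. Writing $x-\gamma(s)=A(s)+B(s)$ with $A(s)=d\nu_x - s\tau_0$ and $B(s)=-\tfrac{s^2}{2}\vv H_0-\rho(s)$, these orthogonalities yield
\[
|A(s)|^2 \;=\; d^2+s^2, \qquad |x-\gamma(s)|^2 \;=\; d^2+s^2 - d\,s^2\,\nu_x\!\cdot\!\vv H_0 + O(s^3)\,\|\gamma\|_{C^2}^2.
\]

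Next I would expand $|x-\gamma(s)|^{-3}$ as $(d^2+s^2)^{-3/2}$ times $1+\tfrac{3}{2}\,\frac{d\,s^2\nu_x\cdot\vv H_0}{d^2+s^2}+\cdots$, and expand $(x-\gamma(s))\wedge\tau(s)$ using the decomposition above. Using $\tau_0\wedge\tau_0=0$, the leading contribution is
\[
-\int \frac{d\,\nu_x\wedge\tau_0}{4\pi(d^2+s^2)^{3/2}}\,\d s \;\longrightarrow\; \frac{1}{2\pi d}\,\tau_0\wedge\nu_x
\]
as the $|s|\le\delta$ integral differs from the $\R$-integral by $O(1)$. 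The next-order contribution collects all terms proportional to $\vv H_0$: the term $-\tfrac{s^2}{2}\vv H_0\wedge\tau_0$ in the numerator divided by $(d^2+s^2)^{3/2}$, and the term $s\,\tau_0\wedge s\vv H_0$ divided by the same denominator, plus the cross term from expanding $|x-\gamma(s)|^{-3}$. Each of these produces the elementary integral $\int_{-\delta}^\delta \frac{s^2}{(d^2+s^2)^{3/2}}\,\d s= 2\log\frac{\delta+\sqrt{\delta^2+d^2}}{d} - O(1)$, whose coefficient combines to give exactly $\tfrac{1}{4\pi}|\log d|\,\tau_0\wedge\vv H_0$ (matching the $\tfrac{1}{2\pi}\cdot\tfrac12$ prefactor in the statement). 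Symmetry ($s\mapsto -s$) kills half-integer powers of $s$ in the local leading terms, which is essential to avoid spurious $1/d$ contributions from the $\vv H_0$ part.

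Finally I would bound the remainder $R(x)$. The remainder splits into (a) the far-field bounded contribution, (b) the error from replacing $(d^2+s^2)$-based integrals on $[-\delta,\delta]$ by their $\R$-limits (this is $O(1)$), (c) the error from the Taylor remainders $\rho(s)$ and $\tilde\rho(s)$, and (d) the higher-order products of corrections. Term (c) requires estimating expressions of the form $\int_{-\delta}^\delta \frac{|\rho(s)|}{(d^2+s^2)^{3/2}}\,\d s$ and similar: if $|\rho(s)|\le \omega(s)\,s^2$ with $\omega(s)\to 0$ as $s\to 0$, this integral is $\int \omega(s)\cdot\frac{s^2}{(d^2+s^2)^{3/2}}\,\d s = o(|\log d|)$ giving assertion (ii); if $|\rho(s)|\le C\,s^{2+\alpha}$, it is $O(d^{\alpha-0})=O(1)$ giving (iii); and the raw $C^2$ bound $|\rho(s)|\le C\|\gamma\|_{C^2}s^2$ yields (i) with the explicit $|\log\eps|$ growth and $\|\gamma\|_{C^2}^2$ factor (the square comes from multiplying a $C^2$-controlled denominator expansion with a $C^2$-controlled numerator expansion). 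The main technical obstacle is to track all cross-terms in the product expansion cleanly enough to verify that the coefficient of $|\log d|$ is exactly $\tfrac{1}{4\pi}\tau_0\wedge\vv H_0$ and nothing else, since a miscounted factor would pollute the claimed expansion; dimensional checks and the symmetry argument above are the tools I would rely on to keep bookkeeping under control.
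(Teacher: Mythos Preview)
Your plan is correct and follows the same overall architecture as the paper: split into a far-field piece (bounded trivially) and a local piece, Taylor-expand $\gamma$ to second order, and extract the $1/d$ and $|\log d|$ coefficients from the two elementary integrals $\int(d^2+s^2)^{-3/2}\,ds$ and $\int s^2(d^2+s^2)^{-3/2}\,ds$, then control the Taylor remainder via the modulus of continuity of $\gamma''$ (giving (i)--(iii) exactly as you describe).

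The paper organizes the local computation differently, and the difference is worth knowing. Rather than expanding $|x-\gamma(s)|^{-3}$ as a perturbation of $(d^2+s^2)^{-3/2}$, the paper keeps the \emph{full} denominator for the Taylor polynomial $P_\gamma$, i.e.\ $\bigl[(\tfrac{\gamma_x}{2}s^2-\eps)^2+(\tfrac{\gamma_y}{2}s^2)^2+s^2\bigr]^{3/2}$, and computes $\widetilde\S(x)=b\otimes\int k(x-P_\gamma(s))\wedge P_\gamma'(s)\,ds$ exactly. After rescaling $s\mapsto \eps s$, this becomes a function $I^j(|\vv H|^2,\gamma_x,\tilde r,\eps)$ of two curvature parameters; the paper then shows $\partial_{|\vv H|^2}I^j$ and $\partial_{\gamma_x}I^j$ are $O(1)$ or better, so the curvature dependence beyond the explicit $|\log\eps|\,\tau\wedge\vv H$ term is automatically lower order. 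This sidesteps the cross-term bookkeeping you flag as the ``main technical obstacle.'' In your setup, note that the cross term from expanding the denominator, namely $\int \tfrac32\,\frac{d^2s^2(\nu_x\cdot\vv H_0)}{(d^2+s^2)^{5/2}}\,ds\cdot(\nu_x\wedge\tau_0)$, is actually $O(1)$ (rescale $s=dt$), so it does \emph{not} contribute to the $|\log d|$ coefficient; only the two numerator terms you list do, and they already sum to $\tfrac{1}{4\pi}$. Your approach works, but the paper's parameter-differentiation trick is a cleaner way to certify that nothing else sneaks in at order $|\log d|$.
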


\begin{proof}
\com{First we} establish the identity \eqref{eq: expansionS} for a Taylor approximation of the curve $\gamma$. In a second step, we estimate the error terms. 

\com{We denote $\eps:= \dist(x,\gamma)$}. Without loss of generality, we assume that 
\[
\gamma(0) = 0, \qquad \tau_{\gamma(0)} = \gamma'(0) = e_3, \qquad x = \eps\,e_1 \quad (\Ra \nu_x = e_1).
\]
Recall that
\begin{align*}
\S(x) =  b\otimes \int_{\gamma} k(x-y)\wedge \tau_y \d\H^1_y = b\otimes \int_{0}^1 k(x-\gamma(s))\wedge {\gamma'(s)} \d s.
\end{align*}
We fix $\tilde{r}r>0$ smaller than the embeddedness radius $r$ and such that $|\langle \tau_{\gamma(s)}, \tau_{\gamma(s')}\rangle|\geq \frac12$ whenever $|\gamma(s)-\gamma(s')|\leq 2\tilde{r}$. 

{\bf Step 1. Taylor approximation.}
\com{We begin by replacing $\gamma$ by its second order Taylor polynomial and consider}
\[
P_\gamma:(-\tilde{r},\tilde{r})\to \R^3, \qquad
P_\gamma(s) = 0 + s\,e_3 + \frac{s^2}2 \vv H =  \left(\frac{\gamma_x}2\,s^2 , \frac{\gamma_y}2\,s^2, s\right),
\]
\com{and we} calculate the associated approximation of $\S(x)$ as
\begin{align}
\widetilde \S(x) &= b\otimes \int_{P_\gamma} k(x-y) \wedge \tau_y\d\H^1_y \nonumber\\
	&= b\otimes\int_{-\tilde r}^{\tilde r} k\big(x - P_\gamma(s)\big) \wedge P_\gamma'(s)\ds \nonumber\\
	&= b\otimes\int_{-\tilde r}^{\tilde r} \frac{-1}{4\pi} \frac{x-P_\gamma(s)}{|x- P_\gamma(s)|^3} \wedge P_\gamma'(s)\ds \nonumber\\
	&= \frac{1}{4\pi} b\otimes \int_{-\tilde r}^{\tilde r} \frac{1}{\left[\left(\frac{\gamma_x}2 s^2 - \eps\right)^2 + \left(\frac{\gamma_y}2 s^2\right)^2 + s^2\right]^{3/2}} \begin{pmatrix} \frac{\gamma_x}2\,s^2 - \eps \nonumber \\ \frac{\gamma_y}2\,s^2\\ s\end{pmatrix}\wedge \begin{pmatrix} \gamma_x s\\ \gamma_ys\\ 1\end{pmatrix}\ds \nonumber\\
	&= \frac{1}{4\pi} b\otimes \int_{-\tilde r}^{\tilde r} \frac{1}{\left[\left(\frac{\gamma_x}2 s^2 - \eps\right)^2 + \left(\frac{\gamma_y}2 s^2\right)^2 + s^2\right]^{3/2}} \begin{pmatrix} \frac{\gamma_y}2 s^2 - \gamma_y s^2\\ \gamma_xs^2 - \left(\frac{\gamma_x}2s^2 - \eps\right)\\ \frac{\gamma_x}2\gamma_y\,s^3 -\gamma_y\eps s - \gamma_x\,\frac{\gamma_y}2\,s^3\end{pmatrix}\ds \nonumber\\
	&= b\otimes\frac1{4\pi}\int_{-\tilde r}^{\tilde r} \frac{s^2/2}{\left[\left(\frac{\gamma_x}2 s^2 - \eps\right)^2 + \left(\frac{\gamma_y}2 s^2\right)^2 + s^2\right]^{3/2}}\ds \begin{pmatrix} -\gamma_y\\ \gamma_x\\ 0\end{pmatrix} \nonumber \\
	&\qquad + b\otimes\frac1{4\pi}\int_{-\tilde r}^{\tilde r} \frac{\eps}{\left[\left(\frac{\gamma_x}2 s^2 - \eps\right)^2 + \left(\frac{\gamma_y}2 s^2\right)^2 + s^2\right]^{3/2}}\ds \begin{pmatrix}0\\ 1\\ 0\end{pmatrix} \label{eq: tildeS Taylor}
\end{align}
since the odd integrals in $e_3$ direction cancel out due to anti-symmetry. 
Note that $\vv H_{\gamma(0)} = \vv H_{P_\gamma(0)} = (\gamma_x, \gamma_y,0)$, and therefore
\begin{equation}\label{eq: relations Taylor}
(-\gamma_y, \gamma_x, 0) = e_3 \wedge (\gamma_x, \gamma_y,0) = \tau_{\pi(x)} \wedge \vv{H}_{\pi(x)}, \qquad 
e_2 = e_3\wedge e_1 = \tau_{\pi(x)} \wedge \nu_x.
\end{equation}
\com{In view of \eqref{eq: expansionS}, \eqref{eq: tildeS Taylor} and \eqref{eq: relations Taylor}, we claim that}
\begin{equation} \label{eq: tildeS logeps}
\int_{-\tilde r}^{\tilde r} \frac{s^2/2}{\left[\left(\frac{\gamma_x}2 s^2 - \eps\right)^2 + \left(\frac{\gamma_y}2 s^2\right)^2 + s^2\right]^{3/2}}\ds = |\log \eps| + O(1)
\end{equation}
and
\begin{equation} \label{eq: tildeS eps}
\int_{-\tilde r}^{\tilde r} \frac{\eps}{\left[\left(\frac{\gamma_x}2 s^2 - \eps\right)^2 + \left(\frac{\gamma_y}2 s^2\right)^2 + s^2\right]^{3/2}}\ds = \frac2{\eps} + O(1).
\end{equation}
\com{Indeed, we obtain with a change of variables}
\begin{align}
\int_{-\tilde r}^{\tilde r}& \frac{s^2/2}{\left[\left(\frac{\gamma_x}2 s^2 - \eps\right)^2 + \left(\frac{\gamma_y}2 s\right)^2 + s^2\right]^{3/2}}\ds \nonumber\\
	&=\int_{-\tilde r}^{\tilde r} \frac{s^2/2}{\left[ \eps^2 - \gamma_x\eps\,s^2 + \frac14 \left(\gamma_x^2 + \gamma_y^2\right)s^4 + s^2\right]^{3/2}}\ds \nonumber\\
	&= \frac12 \int_{-\tilde r}^{\tilde r} \frac{\eps^2 (s/\eps)^2}{\,\eps^3\left[1 + \frac{|\vec H|^2\,\eps^2}4 (s/\eps)^4 + (1-\gamma_x\eps)\,(s/\eps)^2\right]^{3/2}}\ds \nonumber\\
	&= \frac12\int_{-\tilde r/\eps}^{\tilde r/\eps} \frac{s^2}{\left[1 + \frac{|\vv H|^2 \,\eps^2}4\,s^4 + (1-\gamma_x\eps)s^2\right]^{3/2}}\ds. \label{eq: tildeS term1 cov}
\end{align}
\com{Setting}
\[
\com{I^1(H, t, \tilde{r},\eps):= \int_{-\tilde r/\eps}^{\tilde r/\eps} \frac{s^2}{\left[1 + \frac{H \,\eps^2}4\,s^4 + (1-t\eps)s^2\right]^{3/2}}\ds},
\]
\com{\eqref{eq: tildeS term1 cov} reduces to $\frac12 I^1(|\vv H|^2, \gamma_x, \tilde r,\eps)$, and we have}
\begin{align*}
I^1\left(0,0,r,\eps\right) &= \int_{-\tilde r/\eps}^{\tilde r/\eps} \frac{s^2}{\left[1 + s^2\right]^{3/2}}\ds\\
\partial_{H} I^1\left(|\vv H|^2, \gamma_x, r,\eps\right) &= \int_{-\tilde r/\eps}^{\tilde r/\eps}\left(-\frac32\right) \frac{s^2} {\left[1 + \frac{|\vv H|^2 \,\eps^2}4\,s^4 + (1-\gamma_x\eps)s^2\right]^{5/2}} \frac{ \eps^2\,s^4}4\ds\\
	&= \frac{-3\,\eps^2}8\int_{-\tilde r/\eps}^{\tilde r/\eps}\frac{s^6} {\left[1 + \frac{|\vv H|^2 \,\eps^2}4\,s^4 + (1-\gamma_x\eps)s^2\right]^{5/2}}\ds,\\
\partial_{t} I^1\left(|\vv H|^2, \gamma_x, r,\eps\right)
	&= \frac{3\eps}2 \int_{-\tilde r/\eps}^{\tilde r/\eps} \frac{s^4} {\left[1 + \frac{|\vv H|^2 \,\eps^2}4\,s^4 + (1-\gamma_x\eps)s^2\right]^{5/2}} \ds.
\end{align*}
\com{A calculation using hyperbolic functions and their identities shows that}
\[
\lim_{\eps\to 0} \left[ I^1(0,0,\tilde r,\eps) - 2\,|\log\eps|\right] = 2\left[\log(2\tilde r) +1\right].
\]
The second and third integral do not have any singularities, so it is only the behavior of the integrand at infinity which determines the behavior of the integral. In the derivative with respect to $|\vv H|^2$, the integrand diverges linearly and thus the integral diverges quadratically. Together with the prefactor, this means that $\partial_{H}I^1 = O(1)$. In the second derivative, the integrand decays to zero as $1/s$, so the integral diverges logarithmically. This means that $\partial_{t}I^1 = O(\eps|\log\eps|)$ and thus in total
\[
\int_{-r}^r \frac{s^2/2}{\left[\left(\frac{\gamma_x}2 s^2 - \eps\right)^2 + \left(\frac{\gamma_y}2 s\right)^2 + s^2\right]^{3/2}}\ds = |\log\eps| + O(1)
\]
\com{proving \eqref{eq: tildeS logeps}.}

\com{To obtain \eqref{eq: tildeS eps} we write}
\begin{align*}
I^0(|\vv{H}|^2,\gamma_x, r,\eps) &:= \int_{-\tilde r}^{\tilde r} \frac{\eps}{\left[\left(\frac{\gamma_x}2 s^2 - \eps\right)^2 + \left(\frac{\gamma_y}2 s^2\right)^2 + s^2\right]^{3/2}}\ds\\
	&= \frac 1\eps \int_{-\tilde r/\eps}^{\tilde r/\eps} \frac{1}{\left[1 + \frac{|\vv H|^2 \,\eps^2}4\,s^4 + (1-\gamma_x\eps)s^2\right]^{3/2}}\ds.
\end{align*}
A simple calculation shows that
\begin{align*}
I^0(0,0,r,\eps) &= \frac1\eps \int_{-\tilde r/\eps}^{\tilde r/\eps} \frac{1}{[1+s^2]^{3/2}}\ds\\
	&= \frac2\eps \frac{(\tilde r/\eps)}{\sqrt{1+ (\tilde r/\eps)^2}}\\
	&= \frac2\eps + O(\eps),\\
\partial_{H} I^0\left(|\vv H|^2, \gamma_x, r,\eps\right) &= \frac{-3 \eps}{8} \int_{-\tilde r/\eps}^{\tilde r/\eps} \frac{s^4}{\left[1 + \frac{|\vv H|^2 \,\eps^2}4\,s^4 + (1-\gamma_x\eps)s^2\right]^{5/2}}\ds \\
	&= O(\eps\,|\log\eps|),\\
\text{ \com{and} } \qquad \partial_{t} I^0\left(|\vv H|^2, \gamma_x, r,\eps\right) &= \frac{3}{2} \int_{-\tilde r/\eps}^{\tilde r/\eps} \frac{s^2}{\left[1 + \frac{|\vv H|^2 \,\eps^2}4\,s^4 + (1-\gamma_x\eps)s^2\right]^{5/2}}\ds\\
	&= O(1).
\end{align*}
\com{Hence $I^0(|\vv H|^2, \gamma_x, \tilde r,\eps) = \frac2{\eps} + O(1)$ and we conclude \eqref{eq: tildeS eps}.} \\

{\bf Step 2. Estimate of the remainder for $C^2$-curves.} So far we have shown that 
\begin{align*}
\com{\widetilde S(x) = \frac{b}{2\pi} \otimes} &\com{\bigg[ \left(\eps^{-1} + f_{r,\eps}^{(0)}\left(|\vv H_{\pi(x)}|^2, \gamma_x\right)\right) \tau_{\pi(x)}\wedge \nu_x } \\ 
 &\com{+ \left(\frac{|\log\eps|}{2}+ f_{r,\eps}^{(1)}\left(|\vv{H}_{\pi(x)}|^2, \gamma_x\right)\right) \:\tau_{\pi(x)} \wedge \vv{H}_{\pi(x)} \bigg],}
\end{align*}
where the functions $f_{r,\eps}^0$ \com{and} $f^1_{r,\eps}$ converge to a finite value at the origin and have uniformly bounded Lipschitz constants. \com{We include these terms in the} remainder term $R$ \com{in \eqref{eq: expansionS}}. It remains to estimate how large \com{is the error} stemming from the use of the Taylor approximation \com{in place of $\gamma$}.

\com{We} decompose $\S_\gamma(x)$ into a local part and a nonlocal part. Assume that $\gamma$ is parametrized by unit speed and set
\[
\com{\S(x) = \S^{loc}(x) + \S^{nl}(x),}
\]
\com{where}
\[
\com{S^{loc}(x) := \int_{-\tilde r}^{\tilde r} k\big(x-\gamma(s)\big)\wedge\gamma'(s)\ds \text{ and } S^{nl}(x):= \int_{S^1\setminus[-\tilde r,\tilde r]} k\big(x-\gamma(s)\big)\wedge\gamma'(s)\ds. }
\]
\com{As $|k(x)| = \frac1{4\pi |x|^2}$, the} non-local term is easily estimated as
\begin{equation} \label{eq: nonlocalestimate}
|\S^{nl}(x)| \leq \frac{\H^1(\gamma)}{4\pi\,\tilde{r}^2}
\end{equation}
when $\tilde{r}$ is small enough (e.g.\ the embeddedness radius) so we focus on the local term.
\com{Using that (assuming that $\eps$ is small enough)}
\begin{equation}\label{eq: estimatesTaylorgamma}
\com{|x-\gamma(s)|^2 \geq (\com{s^2}+\eps^2)/2 \text{ and } |x - P_{\gamma}(s)| \geq (\com{s^2} + \eps^2)/2}
\end{equation}
\com{we find that }
\begin{align}
\big|\S^{loc}(x) &- \widetilde \S(x)\big| 
	\leq |b| \left|\int_{-\tilde r}^{\tilde r} \left(k\big(x - \gamma(s)\big) \wedge \gamma'(s) - k\big( x-  P_\gamma(s)\big) \wedge  P_\gamma'(s)\right)\ds \right|\nonumber\\
	&\leq |b| \int_{-\tilde r}^{\tilde r} \left(\left| \left[k\big(x - \gamma(s)\big) -  k\big( x-  P_\gamma(s)\big)\right]\wedge \gamma'(s) \right| 
			+ \left| k\big(x- P_\gamma(s)\big) \wedge \left[\gamma'(s) -  P_\gamma'(s)\right] \right| \right)\ds\nonumber\\
	&\leq |b| \int_{-\tilde r}^{\tilde r} \left(\left|\frac{-1}{4\pi}\,\frac{ x-\gamma(s)}{|x-\gamma(s)|^3} + \frac{1}{4\pi}\frac{x- P_\gamma(s)}{|x- P_\gamma(s)|^3}\right|\,\|\gamma'\|_{L^\infty} + \frac{1}{4\pi\,|x- P_\gamma(s)|^2}|\gamma'(s)- P_\gamma'(s)| \right)\ds\nonumber\\
	&\leq\frac{|b|}{4\pi} \int_{-\tilde r}^{\tilde r}  \left(\left|\frac{ P_\gamma(s)-\gamma(s)}{|x-\gamma(s)|^3}\right| + \left|\big(x- P_\gamma(s)\big)\left[ \frac1{|x-\gamma(s)|^3} - \frac{1}{|x- P_\gamma(s)|^3}\right]\right| + \frac{|\gamma'(s)-  P_\gamma'(s)|}{|x- P_\gamma(s)|^2} \right)\ds \nonumber\\
	&\leq\frac{|b|}{4\pi} \int_{-\tilde r}^{\tilde r} \left(\frac{ |P_\gamma(s)-\gamma(s)|}{(s^2/2+\eps^2/2)^{3/2}} + \left|\big(x- P_\gamma(s)\big)\left[ \frac1{|x-\gamma(s)|^3} - \frac{1}{|x- P_\gamma(s)|^3}\right]\right| + 2\frac{|\gamma'(s)-  P_\gamma'(s)|}{s^2+ \eps^2}\right)\ds. \label{eq: comptildeSSloc}
\end{align}
\com{Denote} by 
\[
\omega(s):= \sup\{|\gamma''(s_1) - \gamma''(s_2)|\::\:|s_1-s_2|<s\}
\]
the modulus of continuity of $\gamma''$, and observe that 
\begin{align*}
\gamma'(s) - P_\gamma'(s) &= \left[\gamma' - P_\gamma' \right](s) - \left[\gamma' - P_\gamma'\right](0)\\
	&= \int_0^s \left(\gamma''(\theta) - P_\gamma''(\theta)\right)\d\theta\\
	&= \int_0^s \left(\left[\gamma'' - P_\gamma''\right](\theta) - \left[\gamma'' - P_\gamma''\right](0)\right)\d\theta\\
	&= \int_0^s \left(\gamma''(\theta) - \gamma''(0)\right)\d\theta
\end{align*}
since $P_\gamma''$ is constant as $P_\gamma$ is a quadratic polynomial. It follows that
\begin{equation}\label{eq: comptildeSSloc2}
|\gamma'(s) - P_\gamma'(s)| \leq \int_0^s\omega(|\theta|)\d\theta \leq s\,\omega(|s|)
\end{equation}
and, similarly, that
\begin{equation}\label{eq: comptildeSSloc3}
|\gamma(s) - P_\gamma(s)| \leq s^2\,\omega(|s|).
\end{equation}
Finally, we note that 
\begin{equation}\label{eq: comptildeSSloc4}
\left|\frac{1}{|x-\gamma(s)|^3} - \frac{1}{|x-P_\gamma(s)|^3}\right| = \left|\frac{-3}{\xi_s^4}\cdot \big(|x-\gamma(s)| - |x-P_\gamma(s)|\big)\right| \leq \frac{3}{\xi_s^4}\,\big |\gamma(s) - P_\gamma(s)\big|
\end{equation}
where $\xi_s$ is a point between $|\gamma(s)-x|$ and $|P_\gamma(s)-x|$, so $\xi_s\geq \sqrt{\frac{s^2+\eps^2}{2}}$. \com{In view of \eqref{eq: comptildeSSloc}, \eqref{eq: comptildeSSloc2}, \eqref{eq: comptildeSSloc3} and \eqref{eq: comptildeSSloc4} we obtain}
\begin{align} \label{eq: estimatemodcont}
\big|\S^{loc}(x) &- \widetilde \S(x)\big| \leq \frac{|b|\,\omega(r)}{4\pi} \int_{-r}^r \frac{ s^2}{(s^2/2+\eps^2/2)^{3/2}} + \big(\eps^2 + s^2 + |\vv H|^2s^4 \big)^\frac12\frac{3}{\xi_s^4}\,s^2 + 2\frac{|s|}{s^2+ \eps^2}\ds\\
	&\leq \frac{|b|\,\omega(r)}{2\pi} \bigg[\int_0^\eps \frac{\sqrt{8}\,s^2}{\eps^3} + \frac{12\,(\eps^2 + s^2 + |\vv H|^2 s^4)^\frac12}{\eps^4}\,s^2 + \frac{2\,|s|}{\eps^2}\ds \nonumber\\
	&\hspace{1.5cm}+\int_\eps^r \frac{\sqrt{8} \,s^2}{s^3} + \frac{12 \, \left(\eps^2 + s^2+ |\vv H|^2 s^4\right)^\frac12}{s^4}s^2\,+ 2\frac{|s|}{s^2}\ds \bigg]  \nonumber\\
	&\leq \frac{|b|\,\omega(r)}{2\pi}\,C\left(1 + |\vv H|^2 + |\log\eps|\right) \nonumber
\end{align}
where $C$ is a universal constant. It follows together with \eqref{eq: nonlocalestimate} -- even without a modulus of continuity estimate -- that
\[
\big|\widetilde \S(x) - \S(x)\big| \leq C\,|b| \left[|\log\eps|+1\right]
\]
where $C$ depends only on the embeddedness radius and the $C^{1,1}$-norm of $\gamma$. This can be improved by choosing $\tilde{r}= \tilde{r}_\eps$ such that
\[
\tilde{r}_\eps \to 0, \qquad \frac{1}{|\log\eps|\,\tilde{r}_\eps^2}\to 0
\]
and  observing that
\[
\frac{|\widetilde{\S}(x) - \S(x)|}{|\log\eps|} \leq \frac{|\widetilde \S(x) - \S^{loc}(x)| + |\S^{nl}(x)|}{|\log\eps|} \leq \frac{C\,\omega(\tilde{r}_\eps)\,(1+|\log\eps|)}{|\log\eps|} + \frac{C}{|\log\eps|\,\tilde{r}_\eps^2} \to 0
\]
since $\omega(\tilde{r}_\eps)\to 0$. 
As $\frac{\tilde{r}_\eps}\eps\to \infty$ \com{all the above} analysis for $\widetilde S$ \com{is still valid under this dependence} of $\tilde{r}$ on $\eps$. It follows that $\|R\|_{L^\infty(\partial B_\eps)} = o(|\log\eps|)$. 

{\bf \com{Step 3. Estimate of} the remainder for $C^{2,\alpha}$-curves.} \com{Thus} far we have not made much use of the modulus of continuity estimate in calculations. \com{We can make use of \eqref{eq: comptildeSSloc}, \eqref{eq: comptildeSSloc2}, \eqref{eq: comptildeSSloc3} and \eqref{eq: comptildeSSloc4} to obtain instead of \eqref{eq: estimatemodcont} the estimate}
\begin{align*}
\com{|\S^{loc}-\widetilde{\S}|} &\com{\leq \frac{|b|}{4\pi} \int_{-\tilde r}^{\tilde r} \frac{s^2 \omega(s)}{(s^2/2 + \eps^2 / 2)^{3/2}} + \big(\eps^2 + s^2 + |\vv H|^2s^4 \big)^\frac12\frac{3}{\xi_s^4}\,\omega(s) s^2 + 2\frac{|s| \omega(s)}{s^2+ \eps^2}\ds} \\
&\com{\leq C \int_{-\tilde r}^{\tilde r} \frac{\omega(s)}{|s|}\ds}
\end{align*}
\com{If $\omega$ is such that $\int_{-\tilde r}^{\tilde r} \frac{\omega(s)}{|s|}\ds <\infty$, we find that $\|R\|_{L^\infty(B_\eps)} = O(1)$. This is in particular the case in H\"older spaces $C^{2,\alpha}$. }
\end{proof}

\com{
Before moving on, let us briefly relate $\S$ to physical slip and consider the situation with applied boundary forces.
}

\begin{remark}\label{remark forcing}
\com{So far} we \com{have} assumed that the only stresses in the crystal are due to the presence of the dislocation $\gamma$ and that no external forces are applied, such as volume forces coupling to the displacement \com{in} $\Omega$ (e.g.\ electro-magnetic forces), or boundary forces coupling to the energy on $\partial\Omega$. Boundary forces could be included in the model as (potentially time-dependent) linear terms in the energy.

\com{To be precise}, assume for the moment that $\Omega$ is a strictly star-shaped domain, i.e., \ $0\in\Omega$ and $r\Omega\cc\Omega$ for all $0<r<1$. Since $\gamma\cc\Omega$, there exists $r\in (0,1)$ such that $\gamma\subset r\Omega$ and by construction $\beta = \S + \nabla u$ is curl-free \com{in} the domain $\Omega\setminus\overline{r\Omega}$. Since $\Omega$ \com{is} strictly star-shaped, \com{$\Omega \setminus \overline{r\Omega}$} is homeomorphic to the (simply connected) domain $B_1(0) \setminus \overline{B_{1/2}(0)}$, and so $\beta$ is a gradient \com{in} $\Omega\setminus\overline{r\cdot\Omega}$, say $\beta = \nabla \beta^{\rm{lift}}$. The boundary forces are then included, as usual, by 
\[
\E(\mu) := \inf_{\beta = \S+ \nabla u\in \A_\mu} \left\{ \int_\Omega \frac12\,|\beta|^2 \dx - \int_{\partial\Omega} \langle \beta^{\rm{lift}},f^{surf} \rangle\dx\right\}.
\]
and act on $u$ as boundary conditions. \com{Physically}, a dislocation loop is the boundary of a slip plane $\Sigma$. Assuming that $\Sigma\cc\Omega$ and $\gamma = \partial\Sigma$, we obtain \com{using Stokes' Theorem} that for any $V\in \R^3$ we have
\begin{align*}
\left\langle\int_\gamma k(x-y)\wedge\tau_y\d\H^1, V\right\rangle 
	&= \int_\gamma\left\langle k(x-y)\wedge\tau_y, V\right\rangle\d\H^1\\
	&= \int_\gamma\left\langle V\wedge k(x-y), \tau_y\right\rangle\d\H^1\\
	&= \int_\Sigma \left\langle \curl_y\big(V\wedge k(x-y)\big), \nu_{\Sigma,y}\right\rangle \d\H^2\\
	&= \int_\Sigma \left\langle (\div k)\,V - \partial_Vk(x-y), \:\nu_{\Sigma,y}\right\rangle \d\H^2\\
	&= \int_\Sigma \left\langle 0 - D_yk(x-y) V, \:\nu_{\Sigma,y}\right\rangle \d\H^2\\
	&= \left(\int_\Sigma \nu_{\Sigma,y}^T D_xk(x-y)\d \H^2\right)\cdot V\\
	&= \left(\nabla_x \int_\Sigma k(x-y) \cdot \nu_{\Sigma,y} \d\H^2\right)\cdot V
\end{align*}
if $x\notin \overline{\Sigma}$ since then $\div k = \Delta G=0$ because $k$ is the gradient of the Newton potential. \com{Recall from \eqref{eq: Sconvolution} that $\S(x) = b \otimes \int_\gamma k(x-y)\wedge\tau_y\d\H^1$ and by definition $\nabla \beta^{\rm{lift}} = \S + \nabla u$ outside $r\Omega$. Thus, we find (up to an additive constant) }
\[
\beta^{\rm{lift}}(x) = u + b \int_\Sigma  k(x-y) \cdot \nu_{\Sigma,y}\dy.
\]
The slip-plane $\Sigma$ of $\beta^{\rm{lift}}$ is non-unique, but if $\Sigma, \Sigma'$ are two slip-surfaces, then $\beta^{\rm{lift}}$ is well-defined on connected components of $\Omega\setminus \big(\Sigma\cup \Sigma')$ up to additive constants since $k$ is divergence-free away from $\gamma$. We note that in the special case where $\gamma$ is the unit circle and $\Sigma$ is the unit disk, we can calculate $\beta^{\rm{lift}}$ at $x= (0,0,x_3)$ as
\begin{align*}
\beta^{\rm{lift}}(x) &= u+ b \int_\Sigma k(x-y) \cdot \nu_{\Sigma,y} \d \H^2\\
	&=u+ \frac{b}{4\pi} \int_{\{|y|<1\}}\left\langle\begin{pmatrix}0\\0\\1\end{pmatrix}, \frac{1}{[y_1^2+y_2^2 + x_3^2]^{3/2}}\begin{pmatrix}y_1\\ y_2\\ - x_3\end{pmatrix}\right\rangle \dy_1\dy_2\\
	&= u-\frac{b}{4\pi} \int_{\{|y|<1\}}\frac{x_3}{\,|x_3|^3\,[\left(\frac{y_1}{x_3}\right)^2+\left(\frac{y_2}{x_3}\right)^2 + 1]^{3/2}}\dy_1\dy_2\\
	&= u-\frac{b\,\sign(x_3)}{4\pi} \int_{\{|y'|< \frac1{|x_3|}\}}\frac{1}{[1+(y_1')^2 + (y_2')^2}\dy_1\dy_2\\
	&= u-\frac{b\,\sign(x_3)}{4\pi}\int_0^{|x_3|^{-1}} [1+r^2]^{-3/2}\,2\pi r\dr\\
	&\to u-\frac{b\,\sign(x_3)}{2}
\end{align*}
as $|x_3|\to 0^\pm$. This shows that the deformation $\beta^{\rm{lift}}$ jumps by precisely the Burger's vector across the surface $\Sigma$ since the symmetry only simplified the calculation of the integral. Similar computations can be made for curved surfaces by Taylor approximation. 
\end{remark}

From \com{\eqref{eq: Sconvolution}}, it is \com{clear that $\S\in C^\infty(\R^3\setminus \gamma; \R^{3\times 3})$.}
We show that if $\gamma\in C^{2,\alpha}$ \com{then} the remainder term $R$ \com{in \eqref{eq: expansionS}} is $C^{0,\alpha}$-H\"older continuous.

Let $A$ be a subset of $C^{2}(S^1; \R^3)$ such that all curves in $A$ are uniformly bounded in $C^2$ and have similar embeddedness radii as well as a uniform lower bound on $|\gamma'|$. \com{Using the construction described in Appendix \ref{appendix : cylindrical coordinates}} We choose normal fields $n_1, n_2$ for cylindrical coordinates $\psi_{\gamma;r}$ such that the maps
\[
A \to C^1(S^1;\R^3), \qquad \gamma \mapsto n_{i}\quad (i=1,2)
\]
are Lipschitz continuous.

\begin{lemma}\label{eq: estimaterepara}
The map
\[
\Upsilon_r: A \to C^0(S^1\times S^1), \qquad \gamma\mapsto R_\gamma\circ \psi_{\gamma;r}
\]
is Lipschitz-continuous with Lipschitz constant $L_\eps \leq L(|\log\eps|+1)$ for some $L>0$ depending on $A$ through the uniform bound on the $C^2$-norm, the uniform lower bound on the embeddedness radius, and a uniform lower bound on $|\gamma'|$.
\end{lemma}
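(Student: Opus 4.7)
The plan is to track the Lipschitz dependence on $\gamma \in A$ of each term in the decomposition underlying the proof of Theorem \ref{thm: expansionK}. Fix $\gamma_0, \gamma_1 \in A$, set $\gamma_t = (1-t)\gamma_0 + t\gamma_1$ and $x_t := \psi_{\gamma_t;\eps}(s,\theta)$. By construction of the cylindrical coordinates, $\dist(x_t,\gamma_t) = \eps$ for every $t$. It therefore suffices to estimate
\[
\left| \frac{d}{dt} R_{\gamma_t}(x_t) \right| \leq L(|\log\eps|+1)\, \|\gamma_0 - \gamma_1\|_{C^2}
\]
uniformly in $(s,\theta,t) \in S^1 \times S^1 \times [0,1]$, from which the Lipschitz bound follows by integration in $t$.

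Using \eqref{eq: expansionS}, I would write
\[
R_\gamma(x_t) = 2\pi\, \S_\gamma(x_t) - b\otimes\left[ \eps^{-1}\, \tau_{\pi(x_t)}\wedge \nu_{x_t} + \tfrac12|\log\eps|\, \tau_{\pi(x_t)}\wedge \vv H_{\pi(x_t)} \right]
\]
and treat each piece separately. The subtracted terms depend on $\gamma$ only through the maps $\gamma\mapsto\psi_{\gamma;\eps}, \tau, \nu, \vv H$, which are Lipschitz thanks to the Lipschitz choice of $n_1, n_2$ and standard differential-geometric estimates for $C^2$-curves with a uniform embeddedness radius. Differentiating in $t$ yields a potentially singular $\eps^{-1}$-term -- to be cancelled below -- and an $|\log\eps|$-term multiplied by bounded Lipschitz factors. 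For $\S_\gamma(x_t) = \int_\gamma k(x_t - y)\wedge \tau_y\,d\H^1_y$ I would split $\S_\gamma = \S_\gamma^{loc} + \S_\gamma^{nl}$ using the window $|s'| < \tilde r$ around the pre-image of $\pi(x_t)$. The non-local part is smooth in $(x,\gamma)$ on the relevant set and Lipschitz with a constant depending only on $\tilde r$ and the $C^2$-bound on $A$. For the local part, I decompose $\S_\gamma^{loc} = \widetilde \S_\gamma + (\S_\gamma^{loc} - \widetilde \S_\gamma)$, where $\widetilde \S_\gamma$ is the strain of the Taylor polynomial $P_\gamma$ from Step~1 of Theorem \ref{thm: expansionK}. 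The explicit formulas \eqref{eq: tildeS Taylor}, \eqref{eq: tildeS logeps}, \eqref{eq: tildeS eps} express $\widetilde \S_\gamma(x_t)$ in terms of $\eps$ and $\vv H_{\pi(x_t)}$ via the integrals $I^0, I^1$; its leading $\eps^{-1} \tau\wedge\nu$ component cancels the corresponding subtraction in $R$ exactly, by the very design of \eqref{eq: expansionS}, leaving an $|\log\eps|$-Lipschitz contribution from $I^1$ and $O(1)$ corrections coming from $\partial_H I^1$ and $\partial_t I^1$.

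The main obstacle is estimating $\tfrac{d}{dt}\bigl[\S^{loc}_{\gamma_t}(x_t) - \widetilde \S_{\gamma_t}(x_t)\bigr]$, in which both the evaluation point $x_t$ and the integration curve $\gamma_t$ vary simultaneously. Here I would revisit the pointwise estimates \eqref{eq: comptildeSSloc}, \eqref{eq: comptildeSSloc2}, \eqref{eq: comptildeSSloc3}, \eqref{eq: comptildeSSloc4}: the integrands are multilinear in $\gamma - P_\gamma$, $\gamma' - P_\gamma'$ and in negative powers of $|x - P_\gamma(s)|$. Differentiating in $t$ via the chain rule, each modulus-of-continuity factor $\omega(s)$ gets replaced by $|s|\cdot\|\gamma_0 - \gamma_1\|_{C^2}$ (since $\tfrac{d}{dt}\gamma_t'' = \gamma_1'' - \gamma_0''$ has $C^0$-norm controlled by $\|\gamma_0-\gamma_1\|_{C^2}$), while derivatives falling on the rational factors are harmless thanks to the uniform $C^2$-bound on $A$ and the embeddedness radius lower bound. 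Applying the same change of variable $s \mapsto s/\eps$ and splitting into $|s| < \eps$ versus $\eps < |s| < \tilde r$ used to derive \eqref{eq: estimatemodcont}, each resulting integral is bounded by $C(1 + |\log\eps|)$. Combining the three contributions produces the claimed Lipschitz constant $L_\eps \leq L(|\log\eps|+1)$ with $L$ depending on $A$ only through the uniform $C^2$-bound, the lower bound on $|\gamma'|$, and the lower bound on the embeddedness radius.
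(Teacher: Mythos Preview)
Your approach differs from the paper's and carries a gap: nothing ensures that the linear interpolant $\gamma_t=(1-t)\gamma_0+t\gamma_1$ remains embedded with a uniform lower bound on $|\gamma_t'|$ and on the embeddedness radius, so $R_{\gamma_t}$ and $\psi_{\gamma_t;\eps}$ may fail to be defined for some $t\in(0,1)$. This can be patched (handle $\|\gamma_0-\gamma_1\|_{C^2}\geq\delta$ trivially via the uniform $L^\infty$ bound on $R$ from Theorem~\ref{thm: expansionK}, and invoke the stability in Appendix~\ref{appendix embeddedness} for nearby curves), but you do not say so. A second concern is your treatment of $\tfrac{d}{dt}\bigl[\S^{loc}_{\gamma_t}(x_t)-\widetilde\S_{\gamma_t}(x_t)\bigr]$: the chain rule also hits the denominators and produces factors $Dk\sim|x|^{-3}$, and the sentence ``each $\omega(s)$ gets replaced by $|s|\cdot\|\gamma_0-\gamma_1\|_{C^2}$'' does not account for these. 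One would need the additional observation $|\dot x_t-\dot\gamma_t(s')|\leq C(|s-s'|+\eps)\|\gamma_0-\gamma_1\|_{C^1}$ to keep the resulting integrals at $O(|\log\eps|)$.

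The paper avoids both issues by a direct finite comparison, with no interpolation and no $t$-differentiation. Step~1 treats curves $\gamma,\eta$ that already agree to first order at the base point ($\gamma(0)=\eta(0)$, $\gamma'(0)=\eta'(0)$, so the evaluation point $x$ and its tangent/normal frame are literally shared): one then reruns \eqref{eq: comptildeSSloc}--\eqref{eq: estimatemodcont} verbatim with $P_\gamma$ replaced by $\eta$ and the modulus $\omega$ replaced by the constant $\|\gamma''-\eta''\|_{L^\infty}$, obtaining $|\S^{loc}_\gamma(x)-\S^{loc}_\eta(x)|\leq C(|\log\eps|+1)\|\gamma-\eta\|_{C^2}$; the $\eps^{-1}\tau\wedge\nu$ contributions cancel identically since $\tau,\nu$ are the same for both curves. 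Step~2 reduces general $\gamma,\eta\in A$ to Step~1 by choosing a Euclidean motion $M$ with $\|M-\mathrm{id}\|\leq C\|\gamma-\eta\|_{C^1}$ that sends $\psi_\eta(s,\theta)$ to $\psi_\gamma(s,\theta)$ and aligns the tangents, and then using the equivariance $\S_{M\eta}(Mx)=b\otimes O\,\widehat\S_\eta(x)$ to transfer the estimate. This alignment trick is the key idea you are missing; it is what makes the comparison look exactly like the Taylor-remainder estimate and eliminates the need to control derivatives of the singular kernel.
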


\begin{proof}
{\bf Step 1.} \com{Assume first} that $\gamma, \eta:(-r,r)\to \R^3$ are curves \com{in $A$} such that
\[
\gamma(0) = \eta(0) = 0, \qquad \gamma'(0) = \eta'(0) = e_3, \qquad x = \eps\,e_1.
\]
\com{Replacing $P_{\gamma}$ by $\eta$ in Step 2 in the proof of Theorem \ref{thm: expansionK} one immediately derives the analogous estimates to \eqref{eq: comptildeSSloc}, \eqref{eq: comptildeSSloc2}, \eqref{eq: comptildeSSloc3} and \eqref{eq: comptildeSSloc4} with $\omega(s) = \| \gamma'' - \eta'' \|_{L^{\infty}}$. Then we obtain exactly in \eqref{eq: estimatemodcont} that}
\[
\com{|\S^{loc}_\gamma(x) - \S^{loc}_\eta(x)| \leq C\,(|\log\eps|+1)\,\|\gamma'' - \eta''\|_{L^\infty}}
\]
The non-local contribution to $\S$ can be estimated in a straight-forward way by $\|\gamma-\eta\|_{C^1}$. Since the $\tau\wedge\nu$-term in $\S_\gamma$ and $\S_\eta$ cancels, this means that
\[
\left|\left[|\log\eps| \vv H_\gamma(x) + R_\gamma(x) \right] - \left[|\log\eps|\,\vv H_\eta(x) +R_\eta(x)\right]\right| \leq C\,(|\log\eps|+1)\,\|\gamma'' - \eta''\|_{L^\infty}
\]
and using the dependence of $\vv H$ on the curve, we deduce that in fact
\[
\left|R_\gamma(x) - R_\eta(x)\right| \leq C\,(|\log\eps|+1)\,\|\gamma - \eta\|_{C^2}.
\]

{\bf Step 2.} \com{Consider} two curves $\gamma, \eta$ \com{in $A$}, and $s,\theta \in S^1$. \com{There} exists an Euclidean motion
\[
M:\R^3\to \R^3, \qquad Mx = m_0 + Ox
\]
with $O \in SO(3)$ such that $|m_0| \leq \|\gamma - \eta\|_{C^0}$, $\|O-I\|_{\R^{3\times 3}} \leq \|\gamma-\eta\|_{C^1}$, and
\[
M(\psi_\eta(s,\theta)) = \psi_\gamma(s,\theta) \com{\qquad \text{ and } \qquad \gamma'(s) = O \eta'(s).}
\]
We calculate
\begin{align*}
\S_{M\eta}(Mx) &= \com{b\otimes \int_{M\eta} k\left(Mx - y\right)\wedge \tau^{M\eta}_y\d\H^1_y}\\
	&\com{= b\otimes \int_\eta k\left(Mx - My\right) \wedge O\tau^\eta_y\d\H^1_y}\\
	&\com{= b\otimes \int_\eta O\left[k(x-y)\right] \wedge O\,\tau^\eta_y\d\H^1_y}\\
	&\com{= b\otimes \int_\eta O\left[k(x-y)\wedge \tau^{\eta}_y\right]\d\H^1_y}\\
	&\com{= b\otimes O \int_\eta k(x-y)\wedge\tau^{\eta}_y\d\H^1_y.}
\end{align*}
since $O$ is an isometry of $\R^3$ (so measure preserving for $\H^1$) and preserves the orientation of $\R^3$, so it commutes with the cross-product. When we write $\S = b\otimes \widehat \S$ \com{and} $R= b\otimes \widehat R$, we can use the first step \com{for $\gamma$ and $M\eta$} to find for $x= \psi_{\gamma}(s,\theta)$ that
\begin{align*}
\left|R_\gamma(x) - R_\eta(x)\right| &= \com{|b|}\,\big|\widehat R_\gamma- \widehat R_\eta\big|\\
	&\leq\com{ |b| \left(\left|O\widehat R_\eta - \widehat R_\eta\right| + |\widehat R_{M\eta} - \widehat R_\gamma|\right)}\\
	&\leq \com{|b| \left(\|M-I\|_{\R^{3\times 3}}\,\|\widehat R_\eta\|_{L^\infty} + C\,(|\log\eps|+1)\,\|\gamma - M\eta\|_{C^2}\right)}\\
	&\leq \com{ C|b| \left(\,\|\gamma-\eta\|_{C^1} (1+|\log\eps|) \|\eta\|_{C^2} + C\,(|\log\eps|+1)\,\left[\|\gamma - \eta\|_{C^2} + \|\eta - M\eta\|_{C^2}\right]\right)}\\
	&\leq  \com{C|b|\,(|\log\eps|+1) \left[\|\gamma-\eta\|_{C^1}\|\eta\|_{C^2} + \|\gamma - \eta\|_{C^2} + \|\gamma-\eta\|_{C^1} \|\eta\|_{C^2}\right]}\\
	&\leq \com{C|b|\,\left(|\log\eps|+1\right)\,\|\gamma-\eta\|_{C^2}.}
\end{align*}
The constant $C$ depend on the set $A$ through the uniform $C^2$-bound and the uniform lower bound on the embeddedness radius.

\end{proof}

\begin{corollary}\label{corollary lipschitz estimate}
If $\gamma\in C^{2,\alpha}(S^1;\R^3)$ is embedded \com{then it holds for the remainder term in \eqref{eq: expansionS} that} $R\in C^{0,\alpha}(\partial B_\eps(\gamma);\R^{3\times 3})$ and \com{for $(s,\theta),(s',\theta') \in S^1 \times S^1$}
\[
\big|R\circ\psi_{\eps,\gamma}(s,\theta) - R\circ\psi_{\eps,\gamma}(s',\theta')\big| \leq C\,|\log\eps|\,\|\gamma\|_{C^{2,\alpha}}\big[|s-s'|^\alpha + |\theta-\theta'|\big]
\]
where the constant $C$ depends on $\gamma$ through its $C^2$-norm and embeddedness radius.
\end{corollary}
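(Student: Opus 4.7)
The plan is to decompose the estimate via the triangle inequality
\[
|R\circ\psi_{\eps,\gamma}(s,\theta) - R\circ\psi_{\eps,\gamma}(s',\theta')| \leq |R\circ\psi_{\eps,\gamma}(s,\theta) - R\circ\psi_{\eps,\gamma}(s',\theta)| + |R\circ\psi_{\eps,\gamma}(s',\theta) - R\circ\psi_{\eps,\gamma}(s',\theta')|
\]
and to control the $s$- and $\theta$-variations separately, using the reparametrization trick together with Lemma \ref{eq: estimaterepara} for the first term and a direct calculation for the second.

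For the $s$-variation at fixed $\theta$, I would set $\eta(\cdot) := \gamma(\cdot + (s - s'))$. Since $\eta$ traces the same oriented curve as $\gamma$, the associated singular strains agree as fields on $\R^3$, so $R_\eta = R_\gamma$; and provided the normal-field construction in Appendix \ref{appendix : cylindrical coordinates} is equivariant under shifts (which is typical for local geometric constructions), $\psi_\eta(s',\theta) = \psi_\gamma(s,\theta)$. Applying Lemma \ref{eq: estimaterepara} at the point $(s',\theta)$ then yields
\[
|R_\gamma(\psi_\gamma(s,\theta)) - R_\gamma(\psi_\gamma(s',\theta))| \leq L_\eps\,\|\gamma - \eta\|_{C^2}.
\]
Using $\gamma\in C^{2,\alpha}$, the second-derivative difference $\|\gamma'' - \gamma''(\cdot+h)\|_{C^0}$ is bounded by $[\gamma'']_{C^\alpha}|h|^\alpha$, while the $C^0$- and $C^1$-differences are $O(|h|) \leq C|h|^\alpha$ for bounded $h$. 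This gives $\|\gamma - \eta\|_{C^2} \leq C\|\gamma\|_{C^{2,\alpha}}|s-s'|^\alpha$, hence the desired bound for this piece.

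For the $\theta$-variation at fixed $s'$, a naive rotation analog of the above trick (taking $\eta = M\gamma$ for $M$ the rotation around the tangent axis at $\gamma(s')$) combined with the covariance $R_{M\gamma}(Mx) = OR_\gamma(x)$ only yields a bound on $(I-O)R_\gamma$ at a single point, which is too weak. I would instead estimate $\partial_\theta R$ directly by revisiting the proof of Theorem \ref{thm: expansionK}. On $\partial B_\eps(\gamma)$ the $\theta$-dependence of the geometric quantities enters only through the outer normal $\nu(\theta) = \cos\theta\, n_1 + \sin\theta\, n_2$, since $\tau_{\pi(x)}$ and $\vv H_{\pi(x)}$ are $\theta$-independent. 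Differentiating the Taylor approximation $\widetilde\S$ from \eqref{eq: tildeS Taylor} in $\theta$ should yield
\[
\partial_\theta\widetilde\S \ = \ \frac{b}{2\pi}\otimes\frac{1}{\eps}\,\tau\wedge\partial_\theta\nu + O(1),
\]
whence $\partial_\theta \widetilde R = O(1)$. For the remainder $\S - \widetilde\S$, I would adapt the Step~3 kernel estimates using $|\gamma - P_\gamma|\lesssim s^{2+\alpha}$, $|\gamma' - P_\gamma'|\lesssim s^{1+\alpha}$, and $|Dk(z)|\lesssim |z|^{-3}$, where the extra factor $|\partial_\theta x| = \eps$ absorbs the higher-order singularity of the derivative and yields $|\partial_\theta(\S - \widetilde\S)| = O(\eps^\alpha)$. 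Together these give $|\partial_\theta R| \leq C$ uniformly on $\partial B_\eps(\gamma)$, and integrating in $\theta$ produces the Lipschitz bound, in fact stronger than the claimed $C|\log\eps|\|\gamma\|_{C^{2,\alpha}}|\theta-\theta'|$.

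The hard part will be isolating the leading singular term in $\partial_\theta\widetilde\S$ and verifying the cancellation against $\partial_\theta(\eps^{-1}\tau\wedge\nu)$. Concretely, this amounts to recomputing the $\theta$-dependence of the integrals $I^0, I^1$ of Step~1 of Theorem \ref{thm: expansionK} at the shifted base point $x = \eps(\cos\theta\, n_1(s') + \sin\theta\, n_2(s'))$ instead of $x = \eps n_1(s')$, tracking how $\gamma_x = \vv H\cdot\nu(\theta)$ propagates through the denominator $|x - P_\gamma(s)|^2$; it is routine but nontrivial.
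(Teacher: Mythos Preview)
Your treatment of the $s$-variation is exactly the paper's: shift $\gamma$ by $s_0$, invoke Lemma~\ref{eq: estimaterepara}, and bound $\|\gamma-\gamma(\cdot+s_0)\|_{C^2}$ by $[\gamma'']_{C^{0,\alpha}}|s_0|^\alpha$.

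Where you diverge is the $\theta$-variation, and here you dismissed the rotation trick too quickly. The paper \emph{does} use it, and it works. After normalizing so that $\gamma(s')=0$, $\gamma'(s')=e_3$, $\psi(s',\theta)=\eps e_1$, let $M$ be the rotation about $e_3$ with $M(\eps e_1)=\psi(s',\theta')$. The covariance computed in Step~2 of Lemma~\ref{eq: estimaterepara} gives $\widehat R_\gamma(M\eps e_1)=O\,\widehat R_{M^{-1}\gamma}(\eps e_1)$, so
\[
|\widehat R_\gamma(\eps e_1)-\widehat R_\gamma(M\eps e_1)|
\le |\widehat R_\gamma(\eps e_1)-\widehat R_{M^{-1}\gamma}(\eps e_1)|
+ |(I-O)\widehat R_{M^{-1}\gamma}(\eps e_1)|.
\]
The first term is bounded by Step~1 of the lemma (both curves pass through $0$ with tangent $e_3$, evaluated at the same point $\eps e_1$) by $C(|\log\eps|+1)\|\gamma-M^{-1}\gamma\|_{C^2}\le C(|\log\eps|+1)\|\gamma\|_{C^2}\|I-M\|$. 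The second term is bounded via the $L^\infty$-estimate of Theorem~\ref{thm: expansionK} by $\|I-O\|\cdot C(|\log\eps|+1)$. Since $\|I-M\|\sim|\theta-\theta'|$, both pieces give the claimed Lipschitz bound. Your objection that one ``only'' gets a bound on $(I-O)R_\gamma$ at a single point overlooks that Step~1 of the lemma, applied to the pair $(\gamma,M^{-1}\gamma)$, supplies the missing comparison $R_\gamma(x)$ vs.\ $R_{M^{-1}\gamma}(x)$.

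Your proposed direct computation of $\partial_\theta R$ may well go through, but it duplicates the entire machinery of Theorem~\ref{thm: expansionK} with $\theta$-dependence tracked, whereas the paper's argument reuses the lemma in two lines. If you do pursue the direct route, note that your claim $\partial_\theta(\S-\widetilde\S)=O(\eps^\alpha)$ is stronger than needed and would require care: the nonlocal piece $\S^{nl}$ also depends on $\theta$ through $x$, and while its contribution is harmless, you should account for it.
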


\begin{proof}
Assume that $\gamma$ is parametrized by unit speed and set 
\[
A:= \{\gamma_{s_0}\:|\:\gamma_{s_0}(s) := \gamma(s+s_0)\}.
\]
\com{By Lemma \ref{eq: estimaterepara} we have}
\begin{align*}
\big|R_\gamma(\psi_\gamma(s,\theta)) - R_\gamma(\psi_\gamma(s_0+s,\theta))\big| &= \big|R_\gamma(\psi_\gamma(s,\theta)) - R_{\gamma_{s_0}}(\psi_{\gamma_{s_0}}(s,\theta)) \big|\\
	&\leq C\,\com{(|\log\eps|+1)\, \|\gamma - \gamma_{s_0}\|_{C^2}}\\
	&\leq C\,\com{(|\log\eps|+1)\, [\gamma'']_{C^{0,\alpha}}\,s_0^\alpha}
\end{align*}
for fixed $\theta\in S^1$. \com{On the other hand, for fixed $s$ we may just use rotations as above to establish Lipschitz continuity. Indeed,} assume that $\gamma(s) = 0, \gamma'(s) = e_3$, $\psi(s,\theta) = \eps e_1$. \com{Let $M$ be} a rotation such that $M\psi(s,\theta) = M\psi(s,\theta')$. Then by Lemma \ref{eq: estimaterepara}
\begin{align*}
\big|R(\psi(s,\theta)) - R(\psi(s,\theta'))| &= \big| R_\gamma (\eps e_1) - R_{\gamma}(M\,\eps e_1)\big|\\
	&= \big| R_\gamma (\eps e_1) - R_{M\,M^{-1}\gamma}(M\,\eps e_1)\big|\\
	&\leq C\,\left(|\log\eps|+1\right)\,\|\gamma- M^{-1}\gamma\|_{C^2}\\
	&\leq C\,\left(|\log\eps|+1\right)\,\|\gamma\|_{C^2} \,\|M^{-1}-I\|\\
	&= C\,\left(|\log\eps|+1\right)\,\|\gamma\|_{C^2} \,\|I-M\|
\end{align*}
for the standard matrix norm on $\R^3$, which is rotation invariant. The distance between the rotation $M$ and the identity matrix is proportional to the angle by which $M$ rotates around the $e_3$ axis, i.e., $\theta-\theta'$. We conclude that
\[
\big|R_\gamma(\psi_\gamma(s,\theta)) - R_\gamma(\psi_\gamma(s',\theta'))\big| \leq C\,\big(|\log\eps|+1\big)\,\|\gamma\|_{C^{2,\alpha}}\big[|s-s'|^\alpha + |\theta-\theta'|\big].
\]
\end{proof}

\subsection{The effective energy}

Asymptotics for true elastic energies in the language of $\Gamma$-convergence are available due to \cite{MR3375538} in the more general framework of lattice-valued currents. \com{Here, we present a heuristic argument to justify the asymptotic effective energy.}

\begin{theorem}\label{theorem: limitenergy}
Let $\gamma\in C^{2}(S^1;\Omega)$ be an embedded curve, $b\neq 0$ a Burgers vector and $\mu$ the Nye dislocation measure. Then there exists a constant $E\in \R$ such that 
\[
\lim_{\eps\to 0} \left(\Eeff (\mu) - \frac{|b|^2\,\H^1(\gamma)}{4\pi}\,|\log\eps|\right) =E.
\]
\end{theorem}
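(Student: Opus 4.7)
\medskip

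\textbf{Proof proposal.} The plan is to decompose $\Eeff(\mu)$ into a part that is manifestly independent of $\eps$ and a singular part that we analyze via the asymptotic expansion of $\S$ obtained in Theorem~\ref{thm: expansionK}. Recall from \eqref{eq: def eff energy} that
\[
\Eeff(\mu) = \tfrac12\int_{\Omega_\eps(\gamma)} |\S|^2\dx + I(u),
\]
where $u$ is the (unique zero-mean) minimizer of $I$ on $H^1(\Omega;\R^3)$. Since $I$ and $u$ do not depend on $\eps$, the term $I(u)$ contributes a fixed constant; hence the entire task reduces to establishing an asymptotic expansion
\[
\tfrac12\int_{\Omega_\eps(\gamma)} |\S|^2\dx = \frac{|b|^2\,\H^1(\gamma)}{4\pi}\,|\log\eps| + C(\gamma) + o(1)
\]
as $\eps\to 0$ for some finite constant $C(\gamma)$.

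Fix $r>0$ strictly below the embeddedness radius of $\gamma$ and split $\Omega_\eps = (\Omega\setminus B_r(\gamma)) \cup (B_r(\gamma)\setminus B_\eps(\gamma))$. The first piece contributes $\int_{\Omega\setminus B_r(\gamma)} \tfrac12|\S|^2\dx$, which is a finite constant independent of $\eps$ because $\S$ is smooth away from $\gamma$. On the tubular piece, I introduce cylindrical coordinates $\psi_{\gamma;r}(s,\rho,\theta)$ (cf.\ Appendix~\ref{appendix embeddedness}), in which the volume element is $J(s,\rho,\theta)\,\rho\,\d\rho\,\d\theta\,\d s$ with $J(s,\rho,\theta) = 1 - \rho\,\kappa(s,\theta) + O(\rho^2)$ and $\kappa(s,\theta) = \langle \vv H_{\gamma(s)},\,\cos\theta\,n_1(s)+\sin\theta\,n_2(s)\rangle$. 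Plugging the expansion \eqref{eq: expansionS} from Theorem \ref{thm: expansionK} and using that $|\tau_{\pi(x)}\wedge\nu_x|=1$ (since the vectors are orthogonal unit vectors), we obtain
\[
|\S(x)|^2 = \frac{|b|^2}{4\pi^2}\!\left[\frac1{\rho^2} + \frac{|\log\rho|}{\rho}\,\langle \tau\wedge\nu, \tau\wedge\vv H\rangle + \frac{2}{\rho}\,\langle\tau\wedge\nu,R\rangle + \text{terms of order }|\log\rho|^2\right].
\]

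The leading contribution is computed explicitly: using $\int_0^{2\pi}\kappa(s,\theta)\d\theta=0$ (since $\kappa$ is a $\cos\theta/\sin\theta$-combination), the angular integration eliminates the first-order correction in $J$, yielding
\[
\int_{B_r(\gamma)\setminus B_\eps(\gamma)}\frac{\dx}{\dist(x,\gamma)^2} = 2\pi\,\H^1(\gamma)\,|\log\eps| + C_1(\gamma,r) + o(1),
\]
which accounts for the entire logarithmic divergence with precisely the prefactor $\tfrac{|b|^2\H^1(\gamma)}{4\pi}$ after multiplying by $\tfrac12\cdot\tfrac{|b|^2}{4\pi^2}$. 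The cross term $\frac{|\log\rho|}{\rho}\langle\tau\wedge\nu,\tau\wedge\vv H\rangle$ also averages to zero over $\theta$ for each fixed $(s,\rho)$, by the same $\cos\theta,\sin\theta$ argument, and the resulting error coming from the $O(\rho)$ correction in $J$ is bounded by $\int_0^r|\log\rho|\,\d\rho<\infty$, hence integrable up to $\rho=0$. The remaining terms —  including $\tfrac{1}{\rho}\langle\tau\wedge\nu,R\rangle$ and the self-terms $|\log\rho|^2$, $|R|^2$, and $|\log\rho|\,|R|$ — multiplied by the Jacobian $J\rho$ are all dominated by $C(1+|\log\rho|)^2$ which is integrable on $B_r(\gamma)$ by the uniform bound $|R(x)|\leq C(1+|\log\dist(x,\gamma)|)$ from Theorem \ref{thm: expansionK}(1). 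Dominated convergence then yields a finite limit as $\eps\to 0$, producing the constant $E$.

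The main obstacle is the bookkeeping of the $1/\rho$-cross terms: one must verify that each diverging contribution cancels after angular integration, leaving only the one explicit $|\log\eps|$-term; this is the reason for extracting the two leading terms in \eqref{eq: expansionS} rather than just the $1/\dist$ one. The remainder $R$ does \emph{not} converge to any explicit limit pointwise and in fact may grow like $|\log\rho|$, so one only gets convergence of the energy (not of $R$ itself), via the integrable majorant $(1+|\log\rho|)^2$.
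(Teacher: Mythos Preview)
Your proposal is correct and follows essentially the same approach as the paper: split off the $\eps$-independent pieces $I(u)$ and $\tfrac12\int_{\Omega\setminus B_r}|\S|^2$, pass to cylindrical coordinates on the tube, and use the expansion \eqref{eq: expansionS} together with the Jacobian expansion to extract the $|\log\eps|$ term. The paper's proof is slightly more casual: it simply records $|\S|^2 = \frac{|b|^2}{4\pi^2}\bigl[\rho^{-2}+O(|\log\rho|/\rho)\bigr]$ and $|\det D\Psi| = \rho + O(\rho^2)$, so that the product is $\rho^{-1}+O(|\log\rho|)$, which already gives the result without tracking the individual cross terms --- your angular-cancellation observations are correct but not strictly needed at this level of precision.
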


\begin{proof}
\com{We use the cylindrical coordinates $\Psi$ adapted to $\gamma$ in a tubular neighborhood $B_r(\gamma)$ introduced in Appendix \ref{appendix embeddedness}}. \com{Let $u$ be the unique (up to the addition of a constant) minimizer of $I$ associated to $\gamma$ indirectly through $\S$ (see Theorem \ref{theorem boundary term limit}). We have using the expansions for $\S$, \eqref{eq: expansionS}, and $|\det D\Psi|$, \eqref{eq: expansiondetpsi},  }
\begin{align*}
\Eeff (\mu) &= \frac12 \int_{\Omega_{\eps}(\gamma)} |\S|^2 \dx + I(u)\\
	&= \frac12 \int_{\Omega_\eps\setminus\Omega_r}|\S|^2 \dx + \frac12 \int_{\Omega_r}|\S|^2 \dx + I(u)\\
	&= \frac12 \int_{\Omega_\eps\setminus\Omega_r}|\S|^2\dx + \E^{\mathrm{eff}}_r(\mu)\\
	&= \frac12 \int_{S^1} \int_\eps^r \int_{S^1} |\S|^2 (\Psi(s,\rho,\theta))\,|\det D\Psi|\d\theta\d\rho\ds + \E^{\mathrm{eff}}_r(\mu)\\
	&= \frac{|b|^2}{8\pi^2} \int_{S^1} \int_\eps^r \int_{S^1} \left[\frac1{\rho^2} + O\left(\frac{|\log \rho|}\rho\right)\right] |\gamma'(s)| \left[ \rho+ O(\rho^2)\right]\d\theta\d\rho\ds 
		+ \E^{\mathrm{eff}}_r(\mu)\\
	&=\frac{|b|^2}{8\pi^2}\int_{S^1} |\gamma'(s)| \int_\eps^r \int_{S^1}\frac1\rho +O(|\log \rho|) \d\theta\d\rho\ds + \E^{\mathrm{eff}}_r(\mu)\\
	&= \frac{|b|^2}{8\pi^2} \left(2\pi\cdot \H^1(\gamma) \left[\log r - \log\eps\right] + O(r\,|\log r|)\right) + \E^{\mathrm{eff}}_r(\mu)\\
	&= \frac{|b|^2\,|\log\eps|}{4\pi}\H^1(\gamma) + \left[ \E^{\mathrm{eff}}_r(\mu) + \frac{|b|^2\,\log(r)}{2\pi}\H^1(\gamma)\right] + O(r\,|\log r|).
\end{align*}
Taking the first term on the right hand side to the left and taking the limit $\eps\to0$ on both sides, we obtain the statement. The $O(r\log r)$ term has only a mild dependence on $\eps$ and the limit exists.
\end{proof}

\begin{remark}
Note that the growth of the effective energy is quadratic in $|b|$. This \com{may explain} why typically dislocations with a large Burgers vector split into two (or more) dislocations of very similar length with smaller Burgers vectors $b_1, b_2$ such that $b_1 +b_2 = b$. The physical growth rate therefore is $|b|$ -- see \cite{MR3375538}.

We do not see this in our model since the variations we consider do not account for the possibility of splitting. However, the result is reasonable for all Burgers vectors $b$ in the crystallographic lattice close enough to the origin, namely all Burgers vectors which cannot be decomposed into several lattice vectors $b_1, \dots, b_N$ such that $|b_1|^2 + \dots + |b_N|^2 \leq |b|^2$.
\end{remark}

\subsection{The Peach-Koehler force} 

\com{The Peach-Koehler force on a dislocation line is defined as the negative gradient of the effective energy under a variation of the dislocation curve. In this section, we obtain an asymptotic expansion of this force. Recalling Proposition \ref{prop: variation} we have}
\[
\frac{\d \Eeff(\mu_t)}{dt}\bigg|_{t=0}= -\int_{\partial B_\eps(\gamma)}\left[\frac12\,|\S|^2 \langle \phi,\nu\rangle - w^\phi\langle b, (\S+\nabla u)\nu\rangle - \langle u, \dot \S^\phi\nu\rangle \right]\d\H^2.
\]
For future purposes, we define the renormalized Peach-Koehler force on the dislocation line $\gamma$ at $y\in \gamma$ by
\begin{equation}\label{eq Peach-Koehler force}
F^{PK,\eps}(x) := - \frac{f^{PK, \eps, (1)}(x) + f^{PK,\eps, (2)}(x) + f^{PK, \eps, (3)}(x)}{|\log\eps|},
\end{equation}
where 
\begin{align*}
f^{PK,\eps,(1)}(y):= &\com{-\int_{\partial B_\eps(\gamma)}\frac12\,|\S|^2 \langle \phi,\nu\rangle \d \H^2,} \\
f^{PK,\eps,(2)}(y):= &\com{\int_{\partial B_\eps(\gamma)} w^\phi\langle b, (\S+\nabla u)\nu\rangle \d\H^2,}\\
f^{PK,\eps,(3)}(y)= &\com{\int_{\partial B_\eps(\gamma)} \langle u, \dot \S^\phi\nu\rangle \d\H^2.}
\end{align*}

In the following we calculate the three terms separately, using the adapted cylindrical coordinates $\psi_\eps$ for $\partial B_\eps(\gamma)$ developed in Appendix \ref{appendix embeddedness}. We have seen \com{in Theorem \ref{theorem: limitenergy}} that the energy is, to highest order, given by the arclength functional, and the main result in this chapter is that, to highest order, its variation is given by curvature (which is the variation of the arclength functional).

\begin{theorem}\label{theorem asymptotics force}
The Peach-Koehler force is
\begin{align*}
F^{PK,\eps}
	&= \frac{|b|^2}{4\pi}\,\vv H + R^{PK,\eps}
\end{align*}
where 
\begin{equation}\label{eq: PKestimates}
\lim_{\eps\to 0} \|R^{PK,\eps}\|_{L^\infty(\gamma)} = 0, \qquad \|R^{PK,\eps}\|_{C^{0,\alpha}}\leq C.
\end{equation}
\com{The} limit is uniform in bounded subsets of $C^{2,\alpha}$ with a uniform embededness radius whereas the constant $C$ depends on $\gamma$ through the $C^{2,\alpha}$-norm and the embeddedness radius.
\end{theorem}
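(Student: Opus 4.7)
The plan is to substitute the expansion \eqref{eq: expansionS} from Theorem \ref{thm: expansionK} into each of the three integrals $f^{PK,\eps,(i)}$, parametrize $\partial B_\eps(\gamma)$ by the adapted cylindrical coordinates $\psi_\eps(s,\theta) = \gamma(s) + \eps(\cos\theta\,n_1(s) + \sin\theta\,n_2(s))$ from Appendix \ref{appendix embeddedness}, and evaluate the angular integrals explicitly. The surface element on $\partial B_\eps$ has the form $\d\H^2 = \eps|\gamma'|\,\d\theta\,\d s + O(\eps^2)\,\d\theta\,\d s$. The guiding observation is that the superficially dominant $1/\eps^2$ contributions vanish by angular symmetry, since $\int_0^{2\pi}\langle\phi(\pi(x)),\nu\rangle\,\d\theta = 0$, while more subtle $|\log\eps|/\eps$ cross-terms survive and yield the leading $|\log\eps|$-order contribution.

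For $f^{PK,\eps,(1)}$ I will expand $|\S|^2 = \tfrac{|b|^2}{4\pi^2}\bigl[\tfrac{1}{\eps^2} + \tfrac{|\log\eps|}{\eps}\langle\vv H,\nu\rangle + o(|\log\eps|/\eps)\bigr]$, using the Lagrange identity $\langle\tau\wedge\nu,\tau\wedge\vv H\rangle = \langle\vv H,\nu\rangle$; the $1/\eps^2$-part drops out by symmetry, and the $|\log\eps|/\eps$-part, together with $\int_0^{2\pi}\langle\vv H,\nu\rangle\langle\phi,\nu\rangle\,\d\theta = \pi\langle\vv H,\phi\rangle$, produces $-\tfrac{|b|^2|\log\eps|}{8\pi}\int_\gamma\langle\vv H,\phi\rangle\,\d\H^1$. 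For $f^{PK,\eps,(2)}$ I first derive an analogue of Theorem \ref{thm: expansionK} for $w^\phi$: the same Taylor-approximation argument gives $w^\phi(x) = \tfrac{1}{2\pi\eps}\langle\phi(\pi(x)),\tau\wedge\nu\rangle + O(|\log\eps|)$ on $\partial B_\eps$, while $\langle b,\S\nu\rangle = \tfrac{|b|^2|\log\eps|}{4\pi}\langle\tau\wedge\vv H,\nu\rangle + O(|\log\eps|)$ (the $1/\eps$-term cancels because $\langle\tau\wedge\nu,\nu\rangle = 0$). Using $\tau\wedge\nu = \cos\theta\,n_2 - \sin\theta\,n_1$ and $\tau\wedge\vv H = H_1 n_2 - H_2 n_1$ in an adapted right-handed frame, the angular integral $\int_0^{2\pi}\langle\phi,\tau\wedge\nu\rangle\langle\tau\wedge\vv H,\nu\rangle\,\d\theta = -\pi\langle\phi,\vv H\rangle$ gives a second leading contribution $-\tfrac{|b|^2|\log\eps|}{8\pi}\int_\gamma\langle\vv H,\phi\rangle\,\d\H^1$. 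The $\nabla u$-part of $f^{PK,\eps,(2)}$ is harmless: since $\gamma \cc \Omega$, the boundary datum $-\S\nu$ on $\partial\Omega$ is smooth, so the harmonic function $u$ satisfies $\nabla u \in L^\infty$ uniformly near $\gamma$, making this term only $O(1)$ and hence $o(1)$ after division by $|\log\eps|$.

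For $f^{PK,\eps,(3)}$, using $\dot\S^\phi\nu = -b\,\partial_\nu w^\phi$, the leading term $\partial_\nu w^\phi \sim -\tfrac{1}{2\pi\eps^2}\langle\phi,\tau\wedge\nu\rangle$ has zero angular mean, so after accounting for the $O(\eps)$ Taylor correction to $\langle u,b\rangle$ around $\pi(x)$ and for the $O(\eps^2)$ correction in the surface element, this term only contributes $O(1)$. Summing the leading parts of $f^{(1)}$ and $f^{(2)}$ and dividing by $-|\log\eps|$ gives exactly $\tfrac{|b|^2}{4\pi}\vv H$. The remainder $R^{PK,\eps}$ collects: (i) the $R$-terms from \eqref{eq: expansionS}, which are $o(|\log\eps|)$ pointwise by Theorem \ref{thm: expansionK}(2); (ii) the $O(\eps^2)$ surface-element corrections; (iii) all $u$- and $\nabla u$-dependent contributions; and (iv) the $\eps$-corrections arising from Taylor-expanding $\phi(y)$ and $u(y)$ across the cross-section. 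Each piece is explicitly $o(|\log\eps|)$ pointwise in $s$, yielding $\|R^{PK,\eps}\|_{L^\infty(\gamma)} \to 0$, uniformly over bounded subsets of $C^{2,\alpha}$ sharing an embeddedness-radius lower bound.

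The main obstacle is the uniform $C^{0,\alpha}$-bound on $R^{PK,\eps}$. For this I will invoke Corollary \ref{corollary lipschitz estimate}, which gives $[R_\gamma]_{C^{0,\alpha}(\partial B_\eps)} = O(|\log\eps|)$ when $\gamma \in C^{2,\alpha}$, and derive analogous Lipschitz-in-$\gamma$ continuity statements for $w^\phi$ and $\partial_\nu w^\phi$ by the Taylor-approximation plus rotation-invariance argument of Lemma \ref{eq: estimaterepara}. Schauder theory applied to the harmonic $u$ with smooth Neumann data gives $u,\nabla u \in C^{0,\alpha}$ uniformly near $\gamma$. Once each $\theta$-integrand is shown to be $C^{0,\alpha}$ in $s$ with norm $O(|\log\eps|)$ uniformly in $\eps$, subtracting the explicit $\tfrac{|b|^2}{4\pi}\vv H$-term (itself $C^{0,\alpha}$ when $\gamma \in C^{2,\alpha}$) and dividing by $|\log\eps|$ produces the asserted uniform $C^{0,\alpha}$-bound.
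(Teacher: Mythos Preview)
Your treatment of $f^{PK,\eps,(1)}$ is exactly the paper's argument: expand $|\S|^2$ via \eqref{eq: expansionS}, kill the $1/\eps^2$ term by the vanishing of $\int_{S^1}\nu\,d\theta$, and read off $-\tfrac{|b|^2}{8\pi}|\log\eps|\,\vv H$ from the $\langle\vv H,\nu\rangle$-cross term. Your $C^{0,\alpha}$ bookkeeping is also the same in spirit.

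For $f^{PK,\eps,(2)}$ and $f^{PK,\eps,(3)}$ you take a genuinely different route. You expand the integrands $w^\phi$ and $\partial_\nu w^\phi$ \emph{pointwise} on $\partial B_\eps$ and then carry out the angular integrals. The paper instead applies Fubini first: since $w^\phi(x)=\int_\gamma\langle k(x-y)\wedge\tau_y,\phi(y)\rangle\,d\H^1_y$, swapping the $\gamma$- and $\partial B_\eps$-integrals gives explicit kernels
\[
f^{PK,\eps,(2)}(y)=\Bigl[\int_{\partial B_\eps}\langle b,(\S+\nabla u)\nu_x\rangle\,k(x-y)\,d\H^2_x\Bigr]\wedge\tau_y,\qquad
f^{PK,\eps,(3)}(y)=-\Bigl[\int_{\partial B_\eps}\langle u,b\rangle\,(\partial_{\nu_x}k)(x-y)\,d\H^2_x\Bigr]\wedge\tau_y,
\]
and these surface integrals are evaluated by replacing $\gamma$ near $y$ with its \emph{first}-order Taylor polynomial $P_\gamma(s)=se_3$. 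For $f^{(2)}$ the two routes are essentially equivalent and lead to the same $-\tfrac{|b|^2}{8\pi}|\log\eps|\,\vv H$; your angular identity $\int_{S^1}\langle\phi,\tau\wedge\nu\rangle\langle\tau\wedge\vv H,\nu\rangle\,d\theta=-\pi\langle\phi,\vv H\rangle$ is correct.

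The one weak point is your handling of $f^{PK,\eps,(3)}$. You assert $\partial_\nu w^\phi\sim-\tfrac{1}{2\pi\eps^2}\langle\phi,\tau\wedge\nu\rangle$ and then argue the next corrections are $O(1/\eps)$. But Theorem \ref{thm: expansionK} only gives an expansion for $\S$-type integrals (kernel $k\sim|z|^{-2}$) with a remainder that is merely $L^\infty$-bounded; it says nothing about $\nabla R$, so you cannot simply differentiate \eqref{eq: expansionS} in $r$. Since $\nabla w^\phi=\int_\gamma D^2G(x-y)\,(\tau_y\wedge\phi(y))\,d\H^1_y$ has a kernel one order more singular ($\sim|z|^{-3}$), you would need to redo the entire Taylor-approximation argument of Theorem \ref{thm: expansionK} at this higher singularity to control the remainder in $\partial_\nu w^\phi$. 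This is certainly doable, but it is genuine extra work that your proposal glosses over. The paper's Fubini rearrangement sidesteps this: rather than expanding $\partial_\nu w^\phi$ pointwise, it computes $\int_{\partial B_\eps}\langle u,b\rangle(\partial_{\nu_x}k)(x-y)\,d\H^2_x$ directly for each fixed $y$, where the second-order kernel $\partial_\nu k$ is explicit and the linearized-cylinder calculation is elementary.
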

\begin{proof}
Theorem \ref{theorem asymptotics force} follows from the upcoming Lemmas \ref{lemma: PKterm1}, \ref{lemma: PKterm2}, and \ref{lemma: PKterm3}.
\end{proof}

\com{In the next section we often write $F^{PK,\eps}_\gamma$ and $R^{PK,\eps}_\gamma$ when the curve $\gamma$ is allowed to evolve.}

\com{
\begin{remark}
It is not clear whether or not our $C^{0,\alpha}$-estimate, \com{\eqref{eq: PKestimates}}, which is only uniform modulo $|\log\eps|$ is optimal, and an improvement in this would likely allow significant advances in the time-dependent setting where an estimate like $\|R^{PK,\eps}\|_{C^{0,\alpha}} \leq C\,|\log\eps|^{-1}$ would allow for a number of new techniques to be applied.
\end{remark}
}

\com{
\begin{remark}
As we have seen, it is fairly involved to obtain estimates on the remainder term even in relatively weak spaces. In some special situations, \com{this} term is much better understood, for example for infinite straight parallel lines. In that \com{case}, there is no curvature and long-ranged interactions between distinct dislocations and between dislocations and the boundary dominate the picture. This is the setting investigated, for example, in \cite{MR2192291,MR3639276}.
\end{remark}
}

\begin{remark}
\com{While we have only estimated $F^{PK,\eps}$ in the $C^{0,\alpha}$-norm and can do no better in a uniform fashion since the leading order term $\vv H$ is only $C^{k-2,\alpha}$ for $C^{k,\alpha}$-curves, we note that $F^{PK,\eps}$ is more regular. 
Indeed, in view of Proposition \ref{prop: variation} we have}
\begin{align*}
F^{PK,\eps}(y) &= \frac{1}2\int_{S^1} |\S|^2\big(\psi_\eps(\pi(y) ,\theta)\big)\, \sqrt{\det g_{\psi_{\eps}} \big(\pi(y),\theta\big)\,}\nu \big(\pi(y),\theta\big) \d\theta\\
	&\qquad - \left[\int_{\partial B_\eps(\gamma)}\big \langle b, (S+\nabla u)\nu_\eps \big\rangle\,k(x-y) \d\H^2_x\right] \wedge \tau_y\\
	&\qquad - \left[\int_{\partial B_\eps(\gamma)} \langle u(x), b\rangle  \left(\partial_{\nu_x}k\right)(x-y)\d\H^2_x\right]\wedge \tau_y.
\end{align*}
\com{Note that the terms involving $S, u, \nabla u$ and $k$ are $C^\infty$-smooth and the regularity of the expression is dictated by the regularity of the closest point projection $\pi$ and its composition with $\psi_\eps, \det g_{\psi_{\eps}}, \nu$, as well as the regularity of the unit tangent. Since all of these functions are of class $C^{k-1,\alpha}$ if $\gamma$ is an embedded $C^{k,\alpha}$-curve, we see that $F^{PK,\eps}\in C^{k-1,\alpha}$, although the bounds degenerate as $\eps\to0$.}
\end{remark}

\begin{lemma}\label{lemma: PKterm1}
\com{The first contribution to the Peach-Koehler force is}
\begin{align*}
f^{PK, \eps, (1)} &:= \com{\frac{1}2\int_{S^1} |\S|^2\big(\psi_\eps(s,\theta)\big)\, \sqrt{\det g_{\psi_{\eps}}(s,\theta)\,}\nu_\eps(s,\theta) \d\theta}\\
	&= \com{-\frac{|b|^2}{8\pi}|\log\eps|\vv H + R^{PK, \eps, (1)},}
\end{align*}
\com{where the remainder term $R^{PK, \eps, (1)}$ is uniformly bounded in $L^\infty(\gamma; \R^3)$ and $|\log\eps|^{-1}\,R^{PK,\eps,(1)}$ is uniformly bounded in $C^{0,\alpha}(\gamma;\R^3)$.
The bounds depend on $\gamma$ through its embeddedness radius, the $C^2$-norm, and its $C^{2,\alpha}$-norm, respectively.}
\end{lemma}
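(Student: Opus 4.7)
The plan is to substitute the asymptotic expansion of $\S$ from Theorem~\ref{thm: expansionK} together with the cylindrical-coordinate Jacobian into the definition of $f^{PK,\eps,(1)}$ and reduce everything to elementary integrals over the cross-section $\theta\in S^1$. Writing $\S = \tfrac{1}{2\pi}\,b\otimes (A_\eps + R)$ on $\partial B_\eps(\gamma)$ with
\[
A_\eps(s,\theta) := \tfrac{1}{\eps}\,\tau_{\gamma(s)}\wedge \nu_\eps(s,\theta) + \tfrac{1}{2}|\log\eps|\,\tau_{\gamma(s)}\wedge \vv H_{\gamma(s)},
\]
and using $|b\otimes v|^2 = |b|^2 |v|^2$ together with the identities $|\tau\wedge\nu_\eps|^2 = 1$ and $(\tau\wedge\nu_\eps)\cdot(\tau\wedge\vv H) = \langle\nu_\eps,\vv H\rangle$ (both coming from $\tau\perp\nu_\eps$ and $\tau\perp\vv H$), I would obtain
\[
|\S|^2 = \tfrac{|b|^2}{4\pi^2}\Bigl[\tfrac{1}{\eps^2} + \tfrac{|\log\eps|}{\eps}\,\langle\nu_\eps,\vv H\rangle + \tfrac{|\log\eps|^2}{4}|\vv H|^2 + 2\,\langle A_\eps, R\rangle + |R|^2\Bigr].
\]
The cylindrical-coordinate calculation from the appendix gives $\sqrt{\det g_{\psi_\eps}(s,\theta)} = \eps|\gamma'(s)|\bigl(1 - \eps\langle \vv H_{\gamma(s)},\nu_\eps(s,\theta)\rangle\bigr) + O(\eps^3)$.

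Multiplying $|\S|^2\sqrt{\det g_{\psi_\eps}}$ by $\nu_\eps$ and integrating over $\theta$, the leading $\tfrac{1}{\eps}$ contribution is killed by $\int_{S^1}\nu_\eps\,d\theta = 0$. The two surviving non-trivial terms come from (a) the cross product $\tfrac{|\log\eps|}{\eps}\langle\nu_\eps,\vv H\rangle$ paired with the leading Jacobian $\eps|\gamma'(s)|$, and (b) the $\tfrac{1}{\eps^2}$ piece paired with the Jacobian correction $-\eps^2|\gamma'(s)|\langle \vv H,\nu_\eps\rangle$. Both are evaluated with the elementary identity
\[
\int_{S^1}\langle v,\nu_\eps(s,\theta)\rangle\,\nu_\eps(s,\theta)\,d\theta = \pi\,v
\]
valid for any $v$ in the normal plane at $\gamma(s)$. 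Combining yields a leading-order contribution proportional to $|\log\eps|\,\vv H_{\gamma(s)}$ with coefficient $\pm\tfrac{|b|^2}{8\pi}$, matching the statement modulo the orientation convention for $\nu_\eps$; the remaining $|\vv H|^2$ piece is again killed by $\int_{S^1}\nu_\eps\,d\theta = 0$.

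The residual $R^{PK,\eps,(1)}$ aggregates: (i) the bounded $\vv H$ piece produced by the Jacobian correction in (b) which is not part of the $|\log\eps|\vv H$ leading term; (ii) higher-order Jacobian corrections, each carrying an extra $\eps$; (iii) the quadratic $|R|^2\sqrt{\det g_{\psi_\eps}}$, of order $\eps\,\|R\|_{L^\infty}^2$, negligible; and (iv) the critical interaction $\tfrac{1}{\eps}(\tau\wedge\nu_\eps)\cdot R$ times the leading Jacobian $\eps|\gamma'(s)|$, producing an integrand of size $|R|$. For $\gamma\in C^{2,\alpha}$, Theorem~\ref{thm: expansionK}(3) supplies $\|R\|_{L^\infty(\partial B_\eps)}\leq C(1+\|\gamma\|_{C^2}^2 + \|\gamma\|_{C^{2,\alpha}})$ uniformly in $\eps$, giving the claimed $L^\infty$ bound on $R^{PK,\eps,(1)}$. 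For the H\"older estimate I would invoke Corollary~\ref{corollary lipschitz estimate}: the $\alpha$-H\"older seminorm of $R\circ\psi_\eps$ in $s$ is at most $C|\log\eps|\|\gamma\|_{C^{2,\alpha}}$; combined with the smooth $\theta$-dependence of $\nu_\eps$ and the Jacobian this integrates to $\|R^{PK,\eps,(1)}\|_{C^{0,\alpha}(\gamma)}\leq C|\log\eps|$, i.e.\ $|\log\eps|^{-1}R^{PK,\eps,(1)}$ is $C^{0,\alpha}$-bounded. The main obstacle is the amplification of the singular factor $1/\eps$ when it meets $R$: the $L^\infty$ control relies on the sharper $C^{2,\alpha}$ estimate from Theorem~\ref{thm: expansionK}(3) rather than the weaker $o(|\log\eps|)$ bound, while the H\"older estimate must tolerate the $|\log\eps|$-growth inherent to the H\"older seminorm of $R$ from Corollary~\ref{corollary lipschitz estimate} -- this is precisely why the final expansion of $F^{PK,\eps}$ in $C^{0,\alpha}$ sits exactly at the critical scaling.
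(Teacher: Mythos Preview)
Your proposal is correct and follows essentially the same approach as the paper: expand $|\S|^2$ using Theorem~\ref{thm: expansionK}, multiply by the cylindrical Jacobian $\sqrt{\det g_{\psi_\eps}}=\eps(1-\eps\langle\vv H,\nu_\eps\rangle)$ and by $\nu_\eps$, kill the leading $1/\eps$ piece by $\int_{S^1}\nu_\eps\,d\theta=0$, and extract the $|\log\eps|\,\vv H$ term from $\int_{S^1}\langle\vv H,\nu_\eps\rangle\nu_\eps\,d\theta=\pi\vv H$. The paper organizes the bookkeeping slightly differently---it absorbs all terms beyond $\tfrac1\eps$ and $|\log\eps|\langle\vv H,\nu\rangle$ into a single $\widetilde R_\eps$ satisfying $\|\widetilde R_\eps\|_{L^\infty}=O(1)$ and $[\widetilde R_\eps]_{C^{0,\alpha}}=O(|\log\eps|)$---whereas you track the pieces (i)--(iv) individually, but the substance is identical, including the appeal to Theorem~\ref{thm: expansionK}(3) for the $L^\infty$ bound and Corollary~\ref{corollary lipschitz estimate} for the H\"older seminorm.
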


Note that it is $-f^{PK,\eps,(1)}$ that contributes to the variation since we $f^{PK,\eps,(1)}$ expresses the (positive) variation of energy.

\begin{proof}
We use the adapted cylindrical coordinates on $\partial B_{\eps}(\gamma)$ as introduced in Appendix \ref{appendix embeddedness}. 
Then we find that
\begin{align*}
\int_{\partial B_\eps(\gamma)}\frac12\,|\S|^2 \langle \phi,\nu_\eps\rangle \d\H^2 &= \int_{S^1}\int_{S^1}|\S|^2\big(\psi_\eps(s,\theta)\big)\,\langle \phi(s), \nu_\eps(s,\theta)\rangle \,\sqrt{\det g_{\psi_{\eps}}(s,\theta)\,}\d\theta\ds\\
	&= \int_{S^1} \left\langle \int_{S^1} |\S|^2\big(\psi_\eps(s,\theta)\big)\,\sqrt{\det g_{\psi_{\eps}}(s,\theta)}\,\nu_\eps(s,\theta) \d\theta, \:\phi(s)\right\rangle \ds,
\end{align*}
and we define
\[
f^{PK, \eps, (1)}(s) := -\frac{1}2\int_{S^1} |\S|^2\big(\psi_\eps(s,\theta)\big)\, \sqrt{\det g_{\psi_{\eps}}(s,\theta)\,}\nu_\eps(s,\theta) \d\theta.
\]
Using the asymptotic expansion of $\S$ in \eqref{eq: expansionS}, we observe that
\begin{align*}
|\S(x)|^2 &= \left|\frac1{2\pi} \,b \otimes \left(\frac1{\eps}\, \tau_{\pi(x)}\wedge \nu_x + \frac{|\log\eps|}{2}\, \tau_{\pi(x)}\wedge  \vv H_{\pi(x)} + R(x) \right)\right|^2\\
	&= \frac{|b|^2}{4\,\pi^2}\left( \frac1{\eps^2} + \frac{|\log\eps|}\eps \langle \vv{H}_{\pi(x)}, \nu_x\rangle + \frac2\eps \langle \tau_{\pi(x)} \wedge\nu_x, R(x)\rangle \right. \\
	& \hspace{1.5cm} \left. + \frac{|\log\eps|^2}4\,|\vv{H}_{\pi(x)}|^2 + |\log\eps|\,\langle \tau\wedge \vv{H}_{\pi(x)}, R(x)\rangle + |R(x)|^2 \right)\\
	&= \frac{|b|^2}{4\,\pi^2\,\eps} \left(\frac1\eps + |\log\eps|\,\langle \vv{H}_{\pi(x)},\nu_x\rangle + \widetilde R_\eps(x)\right)
\end{align*}
where, \com{by Theorem \ref{thm: expansionK}}, $\widetilde R_\eps$ is uniformly bounded in $L^\infty$ and $C^{0,\alpha}$-H\"older continuous with H\"older constant blowing up no faster than $O(|\log\eps|)$. 
Without loss of generality we may assume that $\gamma$ is parametrized by arclength. Then we note that \com{(see \eqref{eq: areadistortion} in Appendix \ref{appendix : cylindrical coordinates})}
\[
\sqrt{\det g_{\psi_{\eps}}(s,\phi)} = \eps - \eps^2 \delta g_{\psi_{\eps}}(s,\phi)
\]
where $\delta g_{\psi_{\eps}}$ is a uniformly bounded and uniformly $C^{0,\alpha}$-H\"older continuous function (with bounds depending on the curvature of $\gamma$). It follows that
\begin{align*}
f^{PK, \eps, (1)}(s) &= -\frac12 \int_{S^1} \frac{|b|^2}{4\,\pi^2\,\eps} \left(\frac1\eps + |\log\eps|\,\langle \vv{H}_{\gamma(s)},\cos\theta\,n_1(s) + \sin\theta\,n_2(s)\rangle + \widetilde R_\eps\right)\\
		&\hspace{4cm}\cdot \left(\eps - \eps^2 \delta g_{\psi_{\eps}}(s,\theta)\right) \left(\cos\theta\,n_1(s) + \sin\theta\,n_2(s)\right)\d\theta\\
	&= \frac{-|b|^2}{8\,\pi^2} \int_{S^1}\frac{\cos\theta\, n_1 + \sin\theta\, n_2}\eps \\
		& \hspace{2.5cm}+ |\log\eps|\langle \vv{H}_{\gamma(s)}, \cos\theta \,n_1 + \sin\theta\,n_2\rangle \,(\cos\theta \,n_1 + \sin\theta\, n_2) + O(1)\d\theta,
\end{align*}
where the $O(1)$-term is uniformly bounded in $L^\infty$ and is $C^{0,\alpha}$-H\"older continuous with H\"older constant growing no faster than $|\log\eps|$. The first term in the integral vanishes due to symmetry, so 
\begin{align*}
f^{PK, \eps, (1)}(s) &= \frac{-|b|^2}{8\,\pi^2} \int_{S^1} |\log\eps|\langle \vv{H}_{\gamma(s)}, \cos\theta \,n_1 + \sin\theta\,n_2\rangle \,(\cos\theta \,n_1 + \sin\theta\, n_2) \d\theta + R^{PK, \eps, (1)}\\
	&= \frac{-|b|^2}{8\pi}|\log\eps|\vv{H}_{\gamma(s)} + R^{PK, \eps, (1)}
\end{align*}
where the error term satisfies the same estimates as the $O(1)$ term before. To see that the last identity holds, use \com{the fact that $\int_{S^1}\cos(\theta)^2 \, d\theta = \int_{S^1}\sin(\theta)^2 \, d\theta =\pi$.}
\end{proof}

\begin{lemma}\label{lemma: PKterm2}
\com{The second contribution to the Peach-Koehler force is}
\begin{align*}
f^{PK, \eps, (2)} &:= \com{\left[\int_{\partial B_\eps(\gamma)}\big \langle b, (\S+\nabla u)\nu_x \big\rangle\,k(x-y) \d\H^2_x \right] \wedge \tau_y}\\
	&= \com{-\frac{|b|^2}{8\pi}|\log\eps|\vv H + R^{PK, \eps, (2)}},
\end{align*}
\com{where the remainder term $R^{PK, \eps, (2)}$ is uniformly bounded in $L^\infty(\gamma; \R^3)$ and $|\log\eps|^{-1}\,R^{PK,\eps,(2)}$ is uniformly bounded in $C^{0,\alpha}(\gamma;\R^3)$.
The bounds depend on $\gamma$ through its embeddedness radius, the $C^2$-norm, and its $C^{2,\alpha}$-norm, respectively.}
\end{lemma}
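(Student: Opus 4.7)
The plan is to follow the same template as Lemma \ref{lemma: PKterm1}, namely substitute the asymptotic expansion of $\S$ from Theorem \ref{thm: expansionK}, pass to adapted cylindrical coordinates on $\partial B_\eps(\gamma)$, and extract the leading-order contribution by symmetry in the angular variable. First I would verify the representation formula for $f^{PK,\eps,(2)}(y)$ stated in the lemma: starting from $f^{PK,\eps,(2)} = \int_{\partial B_\eps(\gamma)} w^\phi \langle b,(\S+\nabla u)\nu_\eps\rangle \d\H^2$, applying Fubini and the triple product identity $\langle k(x-z)\wedge \tau_z,\phi(s)\rangle = \phi(s)\cdot (k(x-z)\wedge \tau_z)$, the $\phi$-pairing produces exactly $\bigl[\int_{\partial B_\eps(\gamma)} \langle b,(\S+\nabla u)\nu_x\rangle k(x-y)\d\H^2_x\bigr]\wedge \tau_y$. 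Then I parametrize $\partial B_\eps(\gamma)$ by $x=\psi_\eps(s,\theta)$ with $\nu_\eps(s,\theta) = \cos\theta\,n_1(s)+\sin\theta\,n_2(s)$ and area element $\sqrt{\det g_{\psi_\eps}} = \eps + O(\eps^2)$.

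The crucial observation is that the leading $1/\eps$ term of $\S$ contributes nothing to $\langle b,\S\nu_\eps\rangle$, since $\langle \tau_{\pi(x)}\wedge\nu_x,\nu_\eps\rangle = 0$; exactly the same cancellation as in Proposition \ref{proposition boundary orthogonal}. Hence by Theorem \ref{thm: expansionK},
\[
\langle b,\S\nu_\eps\rangle = \tfrac{|b|^2|\log\eps|}{4\pi}\langle \tau_{\pi(x)}\wedge \vv H_{\pi(x)},\nu_\eps\rangle + \tfrac{|b|^2}{2\pi}\langle R,\nu_\eps\rangle,
\]
and $\langle b,\nabla u\,\nu_\eps\rangle$ is bounded since $u$ is smooth in the interior of $\Omega$. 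For $y=\gamma(s_0)$ I split the integral into a local part $|s-s_0|<\delta$ and a remainder, the latter being $O(1)$ because $k(x-y)$ is bounded away from $y$ while the integrand is $O(|\log\eps|)$ times surface area $O(\eps)$. In the local part I Taylor-expand
\[
\psi_\eps(s_0+\sigma,\theta)-\gamma(s_0) = \sigma\tau_0 + \eps\bigl(\cos\theta\,n_1(s_0)+\sin\theta\,n_2(s_0)\bigr) + O(\sigma^2+\eps\sigma),\quad
|\psi_\eps - \gamma(s_0)|^2 = \sigma^2+\eps^2 + O(\sigma^3+\eps\sigma^2),
\]
expand $k$, and write $\tau_0\wedge\vv H_0 = a\,n_1+c\,n_2$ with $a=(\tau_0\wedge\vv H_0)\cdot n_1$, $c=(\tau_0\wedge\vv H_0)\cdot n_2$. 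The $\tau_0$-component in the numerator of $k$ vanishes under $\int_{S^1}(a\cos\theta+c\sin\theta)\d\theta=0$, and the normal components give $\pi\eps(a n_1+c n_2)=\pi\eps(\tau_0\wedge\vv H_0)$ by $\int\cos^2=\int\sin^2=\pi$. The remaining radial integral $\int_{-\delta}^\delta \frac{\eps^2\d\sigma}{(\sigma^2+\eps^2)^{3/2}}\to 2$ as $\eps\to 0$, producing the coefficient $-\tfrac{|b|^2|\log\eps|}{8\pi}$. Finally, the BAC--CAB identity together with $\tau\cdot\vv H=0$ gives $(\tau_y\wedge\vv H_y)\wedge \tau_y = \vv H_y$, yielding the claimed leading term.

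The remainder $R^{PK,\eps,(2)}$ collects four types of errors: (i) the Hölder-bounded remainder $R$ in \eqref{eq: expansionS}, controlled via Corollary \ref{corollary lipschitz estimate}; (ii) the Taylor-expansion corrections in $\psi_\eps - \gamma(s_0)$ and $|\psi_\eps-\gamma(s_0)|^2$, which produce integrands with one fewer power of $\sigma^2+\eps^2$ in the denominator and thus are integrable uniformly in $\eps$; (iii) the far-field truncation, which is smooth and $O(1)$; (iv) the $\nabla u$ contribution, which is $O(1)$ by the bound $\int_{\partial B_\eps} |k(x-y)|\d\H^2 \leq C$ that follows from $\int_{-\delta}^\delta \eps/(\sigma^2+\eps^2)\d\sigma \leq \pi$. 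Each of these is uniformly $L^\infty$-bounded in $y$, while the $y$-regularity requires a rotation/reparametrization comparison in the spirit of Lemma \ref{eq: estimaterepara}: when $y$ is replaced by $y'=\gamma(s_0+h)$, the whole setup can be rigidly translated along $\gamma$, reducing the difference to $\|\gamma - \gamma_h\|_{C^{2,\alpha}}\lesssim h^\alpha$ plus a $|\log\eps|$-factor, matching the stated bound. The main obstacle will be step (iii)/(iv) combined with the Hölder bookkeeping: the kernel $k(\psi_\eps(s,\theta)-y)$ and the reference frame $(\tau,n_1,n_2)$ both depend on $y$, and getting the $C^{0,\alpha}$-dependence with the correct $|\log\eps|$ factor requires carefully separating the singular local piece (handled via Corollary \ref{corollary lipschitz estimate} for $R$ and uniform bounds on the Taylor remainders in $\gamma\mapsto\psi_\eps$) from the smooth non-local piece.
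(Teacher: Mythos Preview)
Your proposal is correct and follows essentially the same approach as the paper: derive the representation formula via Fubini, use the cancellation $\langle\tau\wedge\nu,\nu\rangle=0$ to kill the $1/\eps$ term in $\S\nu_\eps$, replace $\gamma$ locally by its first-order Taylor approximation, exploit the angular symmetries $\int_{S^1}\cos\theta\,\d\theta=0$ and $\int_{S^1}\cos^2\theta\,\d\theta=\pi$ together with the radial integral $\int\eps^2(\sigma^2+\eps^2)^{-3/2}\d\sigma\to 2$, and finish with $(\tau\wedge\vv H)\wedge\tau=\vv H$; the H\"older estimate is obtained exactly as you say, by the shift/rotation comparison of Lemma~\ref{eq: estimaterepara} and Corollary~\ref{corollary lipschitz estimate}. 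The only organizational difference is that the paper first isolates the computation $\bigl[\int_{\partial B_\eps(P_\gamma)}\langle A,\nu_x\rangle\,k(x-y)\,\d\H^2\bigr]\wedge\tau_y = -\tfrac12(1+O(\eps))\,A\wedge\tau_y$ for a \emph{constant} vector $A$ and then separately bounds the error from replacing a $C^{0,\alpha}$ field $A_x$ by $A_{\pi(y)}$ (this modular step is reused for the $R$-term and the $\nabla u$-term), whereas you package everything into one Taylor expansion of $\psi_\eps$; the content is the same.
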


\begin{proof}
We note, \com{using Proposition \ref{prop: derivativeS},} that
\begin{align*}
\int_{\partial B_\eps(\gamma)}&w^\phi\langle b, (\S+\nabla u)\nu \rangle \d\H^2\\
	 &= \int_{\partial B_\eps(\gamma)} \left( \int_\gamma \left[k(x-y)\wedge \tau_y\right]\cdot \phi(y)\d\H^1_y\right) \big\langle b, (\S+\nabla u)(x)\,\nu_x\big\rangle \d\H^2_x\\
	 &= \int_\gamma \left(\int_{\partial B_\eps(\gamma)}\big \langle b, (\S+\nabla u)(x)\nu_x\big\rangle\,k(x-y) \d\H^2_x \wedge \tau_y \right)\cdot \phi(y) \d\H^1_y.
\end{align*}
We set
\[
f^{PK, \eps, (2)}(y) := \left[\int_{\partial B_\eps(\gamma)}\big \langle b, (\S+\nabla u)\nu_x \big\rangle\,k(x-y) \d\H^2_x \right] \wedge \tau_y.
\]
Using the asymptotic expansion of $\S$\com{, \eqref{eq: expansionS}}, we see that 
\begin{align*}
\S(x)\nu_x &= \frac1{2\pi} \,\left[b \otimes \left(\frac1{\eps}\, \tau_{\pi(x)}\wedge \nu_x + \frac{|\log\eps|}2\, \tau_{\pi(x)}\wedge  \vv{H}_{\pi(x)} + R(x) \right)\right]\nu_x\\
	&= \frac1{2\pi}\left(\frac{\langle \tau_{\pi(x)} \wedge\nu_x, \nu_x\rangle}\eps + \frac{|\log\eps|}{2}\,\langle \tau\wedge \vv{H}_{\pi(x)}, \nu_x\rangle + \langle R(x),\nu_x\rangle\right) b\\
	&= \frac{1}{2\pi} \left(\frac{|\log\eps|}2\,\langle \tau_{\pi(x)}\wedge \vv{H}_{\pi(x)}, \nu_x\rangle + \langle R(x),\nu_x\rangle\right) b,
\end{align*}
and so
\begin{align*}
f^{PK, \eps, (2)}(y) &= \frac{|b|^2}{2\pi} \left[\int_{\partial B_\eps(\gamma)}\left(\frac{|\log\eps|}2\,\langle \tau_{\pi(x)}\wedge \vv H_{\pi(x)}, \nu_x\rangle + \langle R(x),\nu_x\rangle\right)\,k(x-y)\d\H^2_x\right]\wedge\tau_y\\
	&\qquad + \left[\int_{\partial B_\eps(\gamma)}\left\langle b, \nabla u(x)\,\nu_x\right\rangle\,k(x-y) \d\H^2_x\right]\wedge\tau_y\\
	&=\frac{|b|^2}{2\pi}\left[\int_{\partial B_\eps(\gamma)}\left(\frac{|\log\eps|}2 \,\langle \tau_{\pi(x)}\wedge \vv H_{\pi(x)}, \nu_x\rangle + \langle R(x),\nu_x\rangle\right)\,k(x-y)\d\H^2_x\right]\wedge\tau_y\\
	&\qquad + \left[\int_{\partial B_\eps(\gamma)}\left\langle \nabla u^Tb, \nu_x\right\rangle\,k(x-y) \d\H^2_x\right]\wedge\tau_y.
\end{align*}
To compute the \com{latter two} integrals, we again make simplifying assumptions. Without loss of generality, we may assume that $y=0$ and $\tau_y= e_3$, \com{and we replace $\gamma$ by its first order Taylor polynomial $P_\gamma(s) = s\,e_3$ when calculating the integral to leading order.} This allows us to explicitly \com{select} normal fields $n_1=e_1, n_2=e_2$ and \com{evaluate} for a fixed vector $A$
\begin{align*}
&\left[\int_{\partial B_\eps(P_\gamma)} \langle A,\nu_x\rangle\, k(x-y)\d\H^2\right] \wedge \tau_y\\
	=&-\left[ \int_{-r}^r\int_{S^1} \langle A, \nu_{\psi_\eps(s,\theta)}\rangle \,\frac{1}{4\pi}\,\frac{1}{[\eps^2+s^2]^{3/2}}
		\begin{pmatrix}\eps\cos\theta\\ \eps\sin\theta\\ s\end{pmatrix}\eps\d\theta\ds\right] \wedge \tau_y\\ 
	=& -\frac{1}{4\pi}\left[\int_{-r}^r \int_{S^1}\left\langle A, \begin{pmatrix}\cos\theta\\ \sin\theta\\ 0\end{pmatrix}\right\rangle 
		\frac1{[\eps^2 + s^2]^{3/2}} \begin{pmatrix}\cos\theta\\ \sin\theta\\ 0\end{pmatrix}\,\eps^2\d\theta\ds \right]\wedge \tau_y\\
	=& -\left[\frac1{4\pi} \left(\int_{-r}^r \frac{\eps^2}{[\eps^2 + s^2]^{3/2}} \ds\right)\left( \int_{S^1} \left\langle A, \begin{pmatrix}\cos\theta\\ \sin\theta\\ 0\end{pmatrix}\right\rangle \begin{pmatrix}\cos\theta\\ \sin\theta\\ 0\end{pmatrix}\d\theta\right)\right]\wedge \tau_y\\
	=& -\frac1{4\pi} \left(\int_{-r}^r \frac{\eps^2}{\eps^3\,[1+ (s/\eps)^2]^{3/2}} \eps\,\frac1\eps\ds\right) \, \pi A\wedge \tau_y\\
	=& -\frac14 \left(\int_{-r/\eps}^{r/\eps} \frac1{[1+s^2]^{3/2}}\ds\right) \:A\wedge \tau_y\\
	=& -\frac14\left(2\,\frac{r/\eps}{\sqrt{1+(r/\eps)^2}}\right) A\wedge\tau_y\\
	= &-\frac12 \big(1+O(\eps)\big)\,A\wedge\tau_y.
\end{align*}
If now $A$ is a $C^{0,\alpha}$-continuous vector field \com{with $A_0 = A(\gamma(0))$}, then
\begin{align*}
\bigg|\int_{\partial B_\eps(P_\gamma)} &\langle A_x,\nu_x\rangle\, k(x-y)\d\H^2\wedge \tau_y - \int_{\partial B_\eps(P_\gamma)} \langle A_0,\nu_x\rangle\, k(x-y)\d\H^2\wedge \tau_y\bigg|\\
	&\leq \frac1{4\pi}\int_{-r}^r\int_{S^1}\big|A_{\psi_\eps(s,\theta)} - A_0\big|\,\frac1{|\psi_\eps(s,\theta)|^2}\eps\d\theta\ds\\
	&\leq \frac1{4\pi}\cdot 2\pi\eps \int_{-r}^r [A]_{C^{0,\alpha}}\,\left(s^2 + \eps^2\right)^\frac\alpha2 \,\frac1{s^2+\eps^2}\ds\\
	&= \frac{[A]_{C^{0,\alpha}}}{2} \int_{-r}^r \eps^{\alpha -2} \left[(s/\eps)^2 +1\right]^{\frac\alpha2 -1} \eps^2\,\frac1\eps\ds\\
	&= \frac{[A]_{C^{0,\alpha}}}{2}\eps^{\alpha} \int_{-r/\eps}^{r/\eps} \frac{1}{[1+s^2]^{1-\frac\alpha2}}\ds\\
	&\leq \frac{C_\alpha\,[A]_{C^{0,\alpha}}}{2}\eps^{\alpha}
\end{align*}
since the integral converges for $\alpha<1$. As before in the asymptotic expansion of $\S$, we can show that the error we make when approximating $\gamma$ by the Taylor polynomial is small in $\eps$ -- note that coefficient integrals $I^1, I^2$ are, to leading order, independent of $\eps$, so it can be \com{shown} that the {\em first order} approximation is sufficient here. Above, mean curvature entered the picture through the expansion of $k$ and $\S$ respectively, not the area element where the curvature term appears only multiplied by $\eps$. The details are similar to the calculations above, using in addition that 
\[
\big|P_\gamma'(s) - \gamma'(s)\big| \leq \|\gamma''\|_{L^\infty}\,s, \quad\big|P_\gamma(s) - \gamma(s)\big| \leq \|\gamma''\|_{L^\infty}\,s^2, 
\] 
which imply that 
\[
\big|\psi_{\gamma,\eps}(s,\theta) - \psi_{P_\gamma,\eps}(s,\theta)\big| \leq C\,\|\gamma''\|_{L^\infty} s 
\]
when the fields $n_1, n_2$ are chosen appropriately. It follows that
\begin{align*}
f^{PK, \eps, (2)}(y) &= \frac{|b|^2}{2\pi} \left( -\frac12\right) \left[\frac{|\log\eps|}2 \left(\tau_y \wedge \vv H_y\right) \wedge\tau_y + \tilde R_y\wedge \tau_y + \left(\nabla u^T(y)\right)b + O(\eps^{\alpha}|\log\eps|)\right]\\
	&= \frac{|b|^2}{4\pi} \left[ - \frac{|\log\eps|}2 \,\vv H_y \com{-} \tilde R(y) \wedge \tau_y \com{-} \nabla u^T(y)\cdot b + O(\eps^{\alpha}|\log\eps|)\right],
\end{align*}
where $\tilde R$ is associated to the remainder term $R$ in $\S$, see Theorem \ref{thm: expansionK}. To see that $\left(\tau \wedge \vv H\right)\wedge \tau = \vv H$, note that the two vectors are orthogonal and wlog aligned with $e_1,e_2$. The same arguments as above show that 
\[
\big\|f^{PK, \eps, (2)}_\gamma - f^{PK, \eps, (2)}_\eta\big\|_{L^\infty} \leq C\,|\log\eps|\,\|\gamma-\eta\|_{C^2},
\]
which implies the $C^{0,\alpha}$-H\"older continuity.
\end{proof}

The third term heavily involves the function $u$, which encodes the interaction of the dislocation line with the domain boundary. As this term does not contribute to the energy to leading order, we \com{also} do not expect it to contribute to the Peach-Koehler force to leading order.

\begin{lemma}\label{lemma: PKterm3}
\com{The third contribution to the Peach-Koehler force}
\[
\com{f^{PK, \eps, (3)}(y)  = - \left[\int_{\partial B_\eps(\gamma)} \langle u(x), b\rangle  \left(\partial_{\nu_x}k\right)(x-y)\d\H^2_x\right]\wedge \tau_y}
\]
\com{is uniformly bounded in $L^\infty(S_L^1; \R^3)$ and $|\log\eps|^{-1}\,f^{PK,\eps,(3)}$ is uniformly bounded in $C^{0,\alpha}(S^1_L;\R^3)$. 
The bounds depend on $\gamma$ through its embeddedness radius, the $C^2$-norm, and its $C^{2,\alpha}$-norm, respectively.}
\end{lemma}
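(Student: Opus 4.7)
The plan is to exploit the fact that $u$ is smooth in a full, $\eps$-independent neighborhood of $\gamma$ and to extract the necessary cancellations from the kernel $\partial_{\nu_x}k$ using the rotational symmetry of $\partial B_\eps(\gamma)$ around $\gamma$ combined with oddness in the arclength variable, in direct analogy with Lemmas \ref{lemma: PKterm1} and \ref{lemma: PKterm2}.

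First I would observe that $u$ does not depend on $\eps$ and solves the Neumann problem $-\Delta u=0$, $\partial_\nu u=-\S\nu$ on $\partial\Omega$, where the data $\S\nu$ is smooth and bounded on $\partial\Omega$ because $\dist(\gamma,\partial\Omega)>0$. Interior regularity for harmonic functions then gives $u\in C^\infty(B_{r_0}(\gamma))$ for any $r_0<\dist(\gamma,\partial\Omega)$, with $C^k$-bounds depending only on $\gamma$. In particular, $u$ and all of its derivatives are uniformly bounded on $\partial B_\eps(\gamma)\subset B_{r_0}(\gamma)$ for small $\eps$. I would then localize: split $\partial B_\eps(\gamma)$ into a near part where $\pi(x)$ lies within arclength $\delta$ of $y=\gamma(s_0)$ (for $\delta>0$ small and independent of $\eps$) and a far part. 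On the far part $|x-y|$ is uniformly bounded below, the integrand is bounded, and the contribution is $O(\H^2(\partial B_\eps(\gamma)))=O(\eps)$.

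For the near part I would switch to the cylindrical coordinates $\psi_\eps(s,\theta)$ of Appendix \ref{appendix embeddedness} centered at $y$, Taylor-expand $\langle u(x),b\rangle = \langle u(y),b\rangle + \langle Du(y)(x-y),b\rangle + O(|x-y|^2)$, and replace $\gamma$ by its second-order Taylor polynomial $P_\gamma$ at $s_0$, in the spirit of Theorem \ref{thm: expansionK}. Using $Dk(z) = -\frac{1}{4\pi}\bigl(|z|^{-3}I - 3|z|^{-5}\,z\otimes z\bigr)$ together with $\nu_x = \cos\theta\,n_1 + \sin\theta\,n_2$ up to lower order, the angular integral $\int_{S^1}\partial_{\nu_x}k(x-y)\,d\theta$ kills the $\cos\theta$ and $\sin\theta$ components of the leading singular part and leaves only a term proportional to $\eps\,s\,(s^2+\eps^2)^{-5/2}\,\tau_y$, which, multiplied by $\sqrt{\det g_{\psi_\eps}}\approx\eps$ and integrated against the constant $\langle u(y),b\rangle$ over the symmetric interval $|s-s_0|<\delta$, vanishes by oddness in $s$. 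The next contribution, $\langle Du(y)(x-y),b\rangle\partial_{\nu_x}k(x-y)$, gains a factor $|x-y|$ and hence has integrand of order $|x-y|^{-2}$; in cylindrical coordinates this yields $\int_{-\delta}^{\delta}\!\!\int_{S^1}\eps\,(s^2+\eps^2)^{-1}\,d\theta\,ds=O(1)$. The remaining corrections from the $O(|x-y|^2)$ Taylor tail of $u$ and from $\gamma-P_\gamma$ produce integrands of order $|x-y|^{-1}$ or better, all uniformly bounded.

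The $C^{0,\alpha}$-estimate on $|\log\eps|^{-1}\,f^{PK,\eps,(3)}$ I would obtain following the template of Lemma \ref{eq: estimaterepara} and Corollary \ref{corollary lipschitz estimate}: the integrand depends on $\gamma$ through $\psi_\eps$, $\tau$, $\nu_x$ and $u_\gamma$, each of which depends continuously on $\gamma$ in $C^{2,\alpha}$ with moduli that blow up at most like $|\log\eps|$ (for $u_\gamma$ this continuity follows from stability of the Neumann problem, using that the data $\S_\gamma\nu$ on $\partial\Omega$ depends continuously on $\gamma$ since $\dist(\gamma,\partial\Omega)>0$). I expect the main obstacle to be the bookkeeping of the preceding paragraph: verifying that the $O(\eps s)$ corrections to $|x-y|^2$, $\nu_x$ and the area element $\sqrt{\det g_{\psi_\eps}}$ do not destroy the symmetry-based cancellation of the leading $u(y)$-term. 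As in the analogous arguments for $f^{PK,\eps,(1)}$ and $f^{PK,\eps,(2)}$, each such correction must be checked to be either of the wrong parity in $\theta$ or integrable with a uniform bound.
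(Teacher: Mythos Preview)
Your proposal is correct and follows essentially the same approach as the paper: exploit the $\eps$-independent interior smoothness of $u$ near $\gamma$, pass to adapted cylindrical coordinates, linearize $\gamma$, and use the angular and wedge cancellations. Two minor differences worth noting: the paper expands $u$ around the projected point $\gamma(s)=\pi(x)$ (so the remainder is $O(\eps^2)$, uniform in $s$) rather than around $y$ as you do (remainder $O(|x-y|^2)$), and consequently the paper's zeroth-order term $\langle u(\gamma(s)),b\rangle$ is killed purely by the $\theta$-integration together with the wedge with $\tau_y$, without invoking $s$-oddness; also, the paper uses only the \emph{first}-order Taylor polynomial of $\gamma$ here (second order is unnecessary since the leading integrals are already $O(1)$). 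These are cosmetic rearrangements of the same argument.
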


\begin{proof}
\com{By Proposition \ref{prop: derivativeS} we have}
 \begin{align*}
\int_{\partial B_\eps(\gamma)}& \langle u, \dot \S^\phi\nu_\eps\rangle \d\H^2
	 =\int_{\partial B_\eps(\gamma)} \langle u, -(b\otimes \nabla w^\phi) \nu_\eps\rangle \d\H^2\\
	 &= - \int_{\partial B_\eps(\gamma)} \langle u, b\rangle \,\langle \nabla w^\phi, \nu_\eps\rangle \d\H^2\\
	 &= - \int_{\partial B_\eps(\gamma)} \langle u, b\rangle \,\left\langle \nabla_x \int_\gamma \left[k(x-y)\wedge \tau_y\right]\cdot\phi(y)\d\H^1_y,\: \nu_\eps(x)\right\rangle\d\H^2_x\\
	 &= - \int_{\partial B_\eps(\gamma)} \langle u, b\rangle \int_\gamma \left[\left(\partial_{\nu_x}k\right)(x-y)\wedge \tau_y\right]\cdot\phi(y)\d\H^1_y\d\H^2_x\\
	 &= -\int_\gamma \left\langle \left[\int_{\partial B_\eps(\gamma)} \langle u(x), b\rangle  \left(\partial_{\nu_x}k\right)(x-y)\d\H^2_x\right]\wedge \tau_y, \:\phi(y)\right\rangle\d\H^1_y,
\end{align*}
and we set
\begin{align*}
 f^{PK, \eps, (3)}(y) &:= - \left[\int_{\partial B_\eps(\gamma)} \langle u(x), b\rangle  \left(\partial_{\nu_x}k\right)(x-y)\d\H^2_x\right]\wedge \tau_y\\
 	&= - \left[\int_{\partial B_\eps(\gamma)} \langle u(x), b\rangle\:  \frac{-1}{4\pi\,|x-y|^3}\left(I - 3\,\frac{x-y}{|x-y|}\otimes \frac{x-y}{|x-y|}\right)\nu_x \d\H^2_x\right]\wedge \tau_y\\
	&= \frac1{4\pi} \left[\int_{\partial B_\eps(\gamma)} \frac{\langle u(x), b\rangle}{|x-y|^3} \left(\nu_x - 3\left\langle \frac{x-y}{|x-y|}, \nu_x\right\rangle\:\frac{x-y}{|x-y|}\right)\d\H^2_x\right]\wedge\tau_y
\end{align*}
To calculate this integral, as before we replace $\gamma$ \com{locally} by its first order Taylor expansion $P_\gamma(s) = s\,e_3$ and assume that $y=0$. \com{The remainder term is as before in $O(1)$ (we skip the proof here).}
For the approximated curve we use the adapted cylindrical coordinates $\psi_{\eps}$ as introduced in Appendix \ref{appendix embeddedness} and note that
\begin{align*}
f^{PK,\eps,(3)}(y) &= \frac1{4\pi} \left[\int_{-r}^r\int_{S^1} \frac{\langle u(\psi_\eps(s,\theta)), b\rangle}{[s^2+\eps^2]^{3/2}}\bigg[\begin{pmatrix} \cos\theta\\ \sin\theta\\ 0\end{pmatrix} \right.\\
	&\hspace{0.1cm}- \left. 3\left\langle \frac{1}{\sqrt{s^2+\eps^2}}\begin{pmatrix} \eps \cos\theta\\ \eps\sin\theta\\ s\end{pmatrix}, \begin{pmatrix}\cos \theta\\ \sin\theta\\ 0\end{pmatrix}\right\rangle \,\frac{1}{\sqrt{s^2+\eps^2}}\begin{pmatrix} \eps \cos\theta\\ \eps\sin\theta\\ s\end{pmatrix}\bigg]\,\eps\d\theta\ds\right] \wedge\tau_y \com{+O(1)}\\
	&= \frac1{4\pi} \left[\int_{-r}^r\int_{S^1} \frac{\langle u(\psi_\eps(s,\theta)), b\rangle}{[s^2+\eps^2]^{3/2}} \begin{pmatrix} \left[1 - \frac{3\eps^2}{s^2+\eps^2}\right] \cos\theta\\  \left[1 - \frac{3\eps^2}{s^2+\eps^2}\right] \sin\theta\\ \com{-\frac{3s}{s^2+\eps^2}}\end{pmatrix}\eps\d\theta\ds\right] \wedge\begin{pmatrix}0\\0\\1\end{pmatrix} + \com{O(1)}.
\end{align*}
We can ignore the last component of the integral which will disappear when taking the wedge product with \com{$\tau_y = \begin{pmatrix}
0\\0\\1 \end{pmatrix}$}. \com{Next, we notice that since $\gamma\cc\Omega$, we can find a smoothly bounded $\Omega'\cc\Omega$ such that $B_r(\gamma) \subset \Omega'$. Classical elliptic regularity theory then yields (see Theorem \ref{eq: Neumann})}
\[
\|u\|_{C^{k,\alpha}(\overline{\Omega'})} \leq C(k,\alpha,\Omega',\Omega)\,\|u\|_{H^1(\Omega)} \leq C\|\S\nu\|_{L^2(\partial\Omega)} \leq C
\]
\com{for any $k\in\N$ and $\alpha\in[0,1]$. When choosing $k=2$, this allows us to use the approximation }
\[
u(\psi_\eps(s,\theta)) = u(\gamma(s)) + (\nabla u)_{\gamma(s)}\,\eps\nu_{\psi_\eps(s,\theta)} + O(\eps^2),
\]
\com{where the $O(\eps^2)$-term is uniform in $s$ and $\theta$, so that}
\[
\langle u(\psi_\eps(s,\theta)), b\rangle = \left\langle u(\gamma(s)), b\right\rangle + \eps \left\langle (\nabla u)_{\gamma(s)}^T b, \nu_{\psi_\eps(s,\theta)} \right\rangle + O(\eps^2).
\]
Consequently,
\begin{align*}
f^{PK,\eps,(3)}(y) &= \frac1{4\pi} \left[\int_{-r}^r\int_{S^1} \frac{\left\langle u(\gamma(s)), b\right\rangle}{[s^2+\eps^2]^{3/2}} \begin{pmatrix} \left[1 - \frac{3\eps^2}{s^2+\eps^2}\right] \cos\theta\\  \left[1 - \frac{3\eps^2}{s^2+\eps^2}\right] \sin\theta\\ 0\end{pmatrix}\eps\d\theta\ds \right] \wedge\begin{pmatrix}0\\0\\1\end{pmatrix} \\
		&+ \frac1{4\pi} \left[\int_{-r}^r\int_{S^1} \frac{\eps\left\langle (\nabla u)_{\gamma(s)}^T b, \nu_{\psi_\eps(s,\theta)} \right\rangle}{[s^2+\eps^2]^{3/2}} \begin{pmatrix} \left[1 - \frac{3\eps^2}{s^2+\eps^2}\right] \cos\theta\\  \left[1 - \frac{3\eps^2}{s^2+\eps^2}\right] \sin\theta\\ 0\end{pmatrix}\eps\d\theta\ds\right] \wedge\begin{pmatrix}0\\0\\1\end{pmatrix} + \com{O(1)}\\
		&= \frac1{4\pi} \int_{-r}^r \frac{\eps^2\,\left[1 - \frac{3\eps^2}{s^2+\eps^2}\right]}{[s^2+\eps^2]^{3/2}} \left(\int_{S^1}\left\langle (\nabla u)_{\gamma(s)}^Tb, \nu_{\psi_\eps(s,\theta)}\right\rangle\,\nu_{\psi_\eps(s,\theta)}\d\theta\right)\ds\\
		&= \frac1{4\pi} \int_{-r}^r \frac{\eps^2\,\left[1 - \frac{3\eps^2}{s^2+\eps^2}\right]}{[s^2+\eps^2]^{3/2}}\, \pi\,(\nabla u)_{\gamma(s)}^Tb\ds,
\end{align*}
since the first integral, integrated with respect to $\theta$ first, vanishes. 

Finally, we remark that similar symmetry arguments can be employed if we do not linearize $\gamma$, but keep the non-linear cylindrical coordinates and reflect directly inside the integral by shifting the integration domain:
\[
\int_0^{2\pi}f(\theta)\d\theta = \frac12\int_0^{2\pi} f(\theta) + f(\pi+\theta)\d\theta.
\]
The remaining terms are again estimated by the $C^2$-norm of $\gamma$. A notable difference to previous estimates is that the $C^2$-norm enters the estimate despite our use of a {\em first order} Taylor expansion because we are estimating the modulus of continuity of $n_1, n_2$ which are only as smooth as $\gamma'$.
\end{proof}

\section{Convergence of the evolution equations}\label{section evolution}

\subsection{$L^2$-dissipation}

In this section, we provide a convergence result for the gradient flows of the energies $\Eeff$ to codimension $2$ curve shortening flow, which is conditional on the existence and regularity of the gradient flows for $\Eeff$. The problem in this section can be summarized as follows.

\begin{enumerate}
\item If $\gamma\in C^{2,\alpha}$, then $F^{PK,\eps}_\gamma$ approaches mean curvature in $C^{0,\beta}$ for all $\beta<\alpha$. The convergence is uniform on suitable subsets of $C^{2,\alpha}$.

\item From this, we can deduce that if the equation $\Gamma^{\eps}_t =F^{PK,\eps}_\gamma$ has a solution and $\Gamma^\eps$ in the parabolic H\"older space $C^{2,\alpha}_{1,\alpha/2}$ uniformly in $\eps$, then $\Gamma^\eps\to \Gamma$ in $C^{2,\beta}_{1,\beta/2}$ for all $\beta<\alpha$ where $\Gamma$ is the solution of curve shortening flow.

\com{Recall from Section \ref{section preliminaries} that} $C^{2,\alpha}_{1,\alpha/2}$ is the space of functions which have one derivative in time and two in space such that $\partial_tu$ and $D^2u$ are H\"older-continuous of order $\alpha/2$ in time and $\alpha$ in space.

\item The proof of existence and regularity is complicated by the fact that we are not dealing directly with a \com{PDE}, but a non-local equation which approximates a PDE in the singular limit. When expanding the near-field term in the singular part $\S$ of the strain (and by proxy the Peach-Koehler force), the dependence on mean curvature is fully non-linear through integrals of the form
\[
\int_{-r/\eps}^{r/\eps} \frac{1}{[1+s^2 + \eps^2|\vv{H}|^2\,s^4]^{3/2}}\d s.
\]
\com{F}or the error estimate we need the $C^{2,\alpha}$-semi-norm of the space curve, which only makes the error term small in the $C^{0,\beta}$-norm for $\beta<\alpha$.

\end{enumerate}

We will not provide a full proof of existence, regularity and convergence but only show convergence conditional on existence and regularity. In Section \ref{section h1 dissipation} we will prove existence for a gradient flow with an $H^1$-type dissipation similar to \cite{hudson}.

\begin{theorem}\label{theorem motion l2}
Let $\gamma_0\in C^{2,\alpha}(S^1; \Omega)$ be an embedded curve. Assume that there exist $T>0$, $\eps_0>0$ such that the evolution equations
\[
\begin{pde}
\partial_t \Gamma &= \frac{|b|^2}{4\pi} \vv H &t>0,\\
\Gamma &= \gamma_0 &t=0,
\end{pde},
\qquad\qquad
\begin{pde}
\partial_t\Gamma^\eps &= F^{PK,\eps}_\Gamma &t>0,\\
\Gamma^\eps &= \gamma_0 &t=0,
\end{pde}
\]
have solutions in the parabolic H\"older space $C^{2,\alpha}_{1,\alpha/2}([0,T]\times S^1; \R^3)$ such that 
\begin{enumerate}
\item the curves $\Gamma(t,\cdot)$ are embedded in $\Omega$ for all $0< t<T$, and
\item there exists a constant $K>0$ independent of $0<\eps<\eps_0$ such that
\[
\|\Gamma^\eps\|_{C^{2,\alpha}_{1,\alpha/2}([0,T]\times S^1; \R^3)} \leq K.
\]
\end{enumerate}
Then $\Gamma^\eps\to \Gamma$ in $C^{2,\beta}_{1,\beta/2}([0,T']\times S^1; \R^3)$ for all $T'<T$ and $\beta<\alpha$.\end{theorem}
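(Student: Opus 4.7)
The plan is to combine compactness of the uniformly $C^{2,\alpha}_{1,\alpha/2}$-bounded family $\{\Gamma^\eps\}$ with a uniqueness argument for curve shortening flow, exploiting the asymptotic expansion of the Peach-Koehler force from Theorem \ref{theorem asymptotics force}. First, for every $\beta<\alpha$, a standard Arzel\`a-Ascoli argument in the parabolic H\"older class together with the uniform bound (ii) would yield a subsequence $\eps_k\to 0$ and a limit curve $\tilde\Gamma\in C^{2,\alpha}_{1,\alpha/2}$ such that $\Gamma^{\eps_k}\to \tilde\Gamma$ in $C^{2,\beta}_{1,\beta/2}$. By the usual subsequence principle, it then suffices to show that every such subsequential limit coincides with the prescribed curve shortening flow $\Gamma$.

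To identify $\tilde\Gamma$, the next step would be to pass to the limit in the equation $\partial_t \Gamma^{\eps_k} = F^{PK,\eps_k}_{\Gamma^{\eps_k}}$ which, by Theorem \ref{theorem asymptotics force}, reads
\[
\partial_t \Gamma^{\eps_k} = \frac{|b|^2}{4\pi}\vv H_{\Gamma^{\eps_k}} + R^{PK,\eps_k}_{\Gamma^{\eps_k}}.
\]
The curvature term $\vv H_{\Gamma^{\eps_k}}$ depends continuously on second spatial derivatives and on the parametrisation speed (which is uniformly bounded below by the uniform $C^{2,\alpha}$-bound), so $\vv H_{\Gamma^{\eps_k}} \to \vv H_{\tilde\Gamma}$ in $C^{0,\beta}$. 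For the remainder, Theorem \ref{theorem asymptotics force} provides $\|R^{PK,\eps}_{\Gamma^{\eps_k}(t)}\|_{L^\infty}\to 0$ uniformly in $t$ (since (ii) forces $\Gamma^{\eps_k}(t)$ into a fixed bounded subset of $C^{2,\alpha}$ with uniformly positive embeddedness radius, using (i) to ensure that the curve stays away from $\partial\Omega$) while $\|R^{PK,\eps}_{\Gamma^{\eps_k}(t)}\|_{C^{0,\alpha}}$ remains uniformly bounded. The standard interpolation inequality
\[
\|f\|_{C^{0,\beta}} \leq C\,\|f\|_{L^\infty}^{1-\beta/\alpha}\,\|f\|_{C^{0,\alpha}}^{\beta/\alpha}
\]
then forces $R^{PK,\eps_k}_{\Gamma^{\eps_k}}\to 0$ in $L^\infty([0,T']; C^{0,\beta}(S^1))$, and since $\partial_t \Gamma^{\eps_k}\to \partial_t\tilde\Gamma$ in the same topology, the limit $\tilde\Gamma$ satisfies curve shortening flow with initial datum $\gamma_0$.

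The final step is to invoke uniqueness of classical solutions to codimension-two curve shortening flow within the $C^{2,\alpha}_{1,\alpha/2}$-class. This can be set up by writing the difference $\Gamma^\eps - \Gamma$ as a normal graph over $\Gamma$ in a (spacetime) tubular neighbourhood guaranteed by (i), which converts the geometric difference into a scalar quasilinear parabolic system to which Schauder estimates and the standard maximum principle apply. Uniqueness then gives $\tilde\Gamma = \Gamma$ on $[0,T']$, and since the extracted subsequence was arbitrary, the full sequence converges. The main obstacle I expect is the bookkeeping around parametrisations: the curves $\Gamma^\eps$ and $\Gamma$ are parametrised a priori differently and the Peach-Koehler force acts only in the normal direction, so one must introduce a gauge (for example, the orthogonal projection onto $\Gamma(t,\cdot)$ in its tubular neighbourhood, together with a tangential reparametrisation killing the tangential component of $\partial_t \Gamma^\eps$) to make the comparison with $\Gamma$ intrinsic. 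Once this gauge question is settled, the compactness-plus-uniqueness scheme is essentially routine; the genuinely new input is the combination of Theorem \ref{theorem asymptotics force} with the interpolation between $L^\infty$-smallness and $C^{0,\alpha}$-boundedness of the remainder.
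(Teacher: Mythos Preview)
Your proposal is correct and follows essentially the same compactness-plus-uniqueness scheme as the paper. A few of your concerns are unnecessary, though: the paper passes to the limit only in $L^\infty$ (not $C^{0,\beta}$) for the remainder, which already suffices to identify $\partial_t\tilde\Gamma = \frac{|b|^2}{4\pi}\vv H_{\tilde\Gamma}$; and the gauge/parametrisation issue you anticipate does not arise, since both $\Gamma$ and the subsequential limit $\tilde\Gamma$ satisfy the \emph{parametrised} equation $\partial_t = \frac{|b|^2}{4\pi}\vv H$ with identical initial datum $\gamma_0$, for which the paper simply invokes the classical uniqueness result \cite{MR664497} rather than constructing a normal-graph comparison by hand. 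Your interpolation argument and the normal-graph sketch are valid but more than what is needed here.
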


\begin{proof}
Since $\|\Gamma^\eps\|_{C^{2,\alpha}_{1,\alpha/2}}\leq K$, using the closed compact embeddings of H\"older spaces \com{we find} $\tilde \Gamma \in C^{2,\alpha}_{1,\alpha/2}$ such that $\Gamma_\eps \to \tilde \Gamma$ in $C^{2,\beta}_{1,\beta/2}$ for all $\beta<\alpha$. Since $\tilde \Gamma (0, \cdot) = \gamma_0$ \com{is embedded}, it follows that $\Gamma(t,\cdot)$ is an embedded curve for all small enough $0<t< \tilde T$ with a uniform lower bound on the embeddedness radius \com{by continuity}. Due to the $C^{2,\beta}$-convergence, the same is true for $\Gamma_\eps$ for all small enough $\eps$ and \com{all} $T'<\tilde T$.

By Theorem \ref{theorem asymptotics force}, we find that 
\[
F^{PK,\eps}_{\Gamma^\eps(t,\cdot)} - \frac{|b|^2}{4\pi}\vv H_{\tilde \Gamma(t,\cdot)} = F^{PK,\eps}_{\Gamma^\eps(t,\cdot)} - \frac{|b|^2}{4\pi}\vv H_{\tilde \Gamma^\eps(t,\cdot)} +\frac{|b|^2}{4\pi}\vv H_{\tilde \Gamma^\eps(t,\cdot)} - \frac{|b|^2}{4\pi}\vv H_{\tilde \Gamma(t,\cdot)}
\]
converges to $0$ in $L^\infty$. It follows that 
\[
\tilde \Gamma_t = \lim_{\eps\to 0} \Gamma^\eps_t = \lim_{\eps\to 0} F^{PK,\eps}_{\Gamma^\eps} = \frac{|b|^2}{4\pi}\vv H_{\com{\tilde \Gamma}},
\]
so that $\tilde \Gamma=\Gamma$ on $[0,T']$ since the curve shortening flow has a {\em unique} solution (see, for example, \cite{MR664497}). Iterating the argument, we can choose any $T' < \tilde T= T$.
\end{proof}

Clearly, the proof hinges on regularity and existence of the solution to the evolution problem for the effective energy $\Eeff$. However, assuming that such a priori bounds hold, we could prove the same result for more physically relevant problems.

\begin{remark}\label{remark physical dissipation}
It is widely accepted that dislocations move differently depending on whether they are of edge type, i.e., $b\bot \tau$, or screw type, i.e., $b\parallel \tau$. Edge dislocations move only in the plane spanned by $b$ and $\tau$ while screw dislocations may move in a set of crystallographic directions $s_1,\dots, s_N$, one of which is chosen by a {\em maximal dissipation criterion}. Real dislocations mix both types, and except on segments where $b$ and $\tau$ are almost parallel, the motion \com{resembles} that of edge dislocations (but see \cite{liu2014interfacial} for a more accurate picture).

This motion law is non-unique when two directions are equally favourable for energy dissipation. Consider the simple situation \com{in which} two infinite straight dislocations, modelled by locations $x_1, x_2$, in the plane attract each other with a force $f \parallel (x_2 -x_1)$. If $x_1(0) = -e_1$, $x_2(0) = e_1$, and the crystallographic directions are $s_1 = e_1+e_2$, $s_2 = e_1 -e_2$, then moving in either direction is equally advantageous energetically. In fact, it is possible to alternate between the two directions quickly -- this gives rise to so-called fine cross slip where the the macroscopic motion is only in the convex hull of the maximally dissipating directions \cite{MR3323554}.

Formally, $(x_1, x_2)\in \R^4$ solves the differential inclusion $(\dot x_1, \dot x_2) \in -\mathsf{conv}\left(\partial \E(x_1,x_2)\right)$ \com{which means that the time derivative of $(x_1,x_2)$ lies in the convex hull of the directions of maximal dissipation}. It is easy to see that 
\[
x_1(t) := \begin{pmatrix} -1 + t\\ \lambda t\end{pmatrix}, \qquad x_2(t) := \begin{pmatrix} 1- t\\ \lambda t\end{pmatrix}, \qquad t\in \R
\]
satisfy the differential inclusion for any $\lambda\in[-1,1]$, and therefore that solutions are generally not unique. It is highly unlikely that such an irregular motion law is going to produce the high degree of spatial regularity we require. We propose a more regular motion law of the form 
\begin{equation}\label{eq physical}
\begin{pde}
\partial_t\Gamma &= m\left(\frac{\gamma'}{|\gamma'|}, b, F^{PK,\eps}_\Gamma\right) &t>0,\\
\Gamma &= \gamma_0 &t=0,
\end{pde}
\end{equation}
where 
\[
m(\tau, b, f) = \beta\left(\left\langle \tau, \frac{b}{|b|}\right\rangle\right)\langle b^\tau, f\rangle\, b^\tau + \left[1- \beta\left(\left\langle \tau, \frac{b}{|b|}\right\rangle\right)\right] \frac{\sum_{i=1}^N \big|\langle f, s_i\rangle\big|^p\,\langle f,s_i\rangle \,s_i}{\sum_{j=1}^N\big|\langle f, s_j\rangle\big|^p}
\]
for some large $p$. Here 
\[
b^\tau := \frac{b- \langle b,\tau\rangle\,\tau}{\big|b- \langle b,\tau\rangle\,\tau\big|}
\]
denotes the projection of $b$ onto the component normal to $\tau$, which is the only direction in which an edge dislocation can effectively move (since tangential movement is irrelevant due to reparametrization). The function $\beta:[-1,1]\to [0,1]$ should be chosen even, smooth and monotone on the positive numbers to switch over between the two types of dynamics and satisfy $\beta(0) = 1, \beta(\pm 1) = 0$. When $p$ is large and there is a unique $s_i$ which maximizes $|\langle s_i, f\rangle|$, then the quotient will converge to $\langle f, s_i\rangle\,s_i$ as $p\to \infty$, otherwise it will pick out the average of the directions which are most aligned with the force.

This ``best'' direction lies in the convex hull of the maximally dissipating crystallographic directions and a simple argument shows that this case occurs exactly if $f$ is parallel to the average of all these directions, so that energy dissipation is truly maximized. 

By the same proof as above, if solutions to \eqref{eq physical} satisfy uniform bounds in suitable spaces, they converge to solutions of
\[
\begin{pde}
\partial_t\Gamma &= m\left(\frac{\gamma'}{|\gamma'|}, b, \vv H_\Gamma\right) &t>0,\\
\Gamma &= \gamma_0 &t=0.
\end{pde}
\]
As before, the main issue is that of existence and uniform regularity, which seems difficult for such a highly non-linear and only degenerately parabolic equation.
\end{remark}

\begin{remark}
The following toy example shows that the properties of $F^{PK,\eps}$ \com{obtained so far} are not sufficient to prove existence of the gradient flow. Let 
\[
f(z) := \max\{-\delta, \min \{\lambda z, \delta\}\}
\]
\com{for some $\lambda\in\R$ and $\delta>0$.
C}onsider the equation
\[
(\partial_t-\Delta) u = f(-\Delta u).
\]
This equation has the same \com{properties that} we isolated in the gradient flow of the Peach-Koehler force with the non-linearity $f$ playing the role of $R$ and the Laplacian replacing the mean curvature. \com{Specifically, for $u\in C^{2,\alpha}$ we find that $f(u)$ is small in $C^0$ and bounded in $C^{0,\alpha}$,} but if we choose an initial condition $u^0$ such that $-\delta < -\lambda\Delta u^{\com0} < \delta$, then the equation becomes
\[
u_t = (1-\lambda)\,\Delta u
\]
close to $u^0$, which is a backwards heat equation if $\lambda>1$. If $u^0$ is not $C^\infty$-smooth, this does not have \com{a} solution since solutions to the heat equation become instantaneously smooth. 
\end{remark}

In the next section we give \com{an} argument that does not require these \com{a priori existence and regularity} assumptions, but \com{which is hinged on} a less physical energy dissipation mechanism.

\subsection{$H^1$-type dissipation}\label{section h1 dissipation}

In this section, we prove existence, regularity and convergence, as $\eps\to 0$, for the gradient flows of $\Eeff$ with respect to an $H^1$-type dissipation. This dissipation regularizes the evolution equation in a way that is not described by a partial differential equation anymore, but \com{instead} by an ordinary differential equation in an open subset of the Banach space $C^{2,\alpha}$, which can be solved by the Picard-Lindel\"off theorem. We do not need the parabolicity of the operator and the smallness of the perturbative term as much as in the case of $L^2$-dissipation.

The procedure is slightly non-standard since the singular perturbation acts on the dissipation mechanism rather than \com{on} the energy functional. \com{A} similar procedure has been employed in \cite{hudson} \com{using} an explicit Euler scheme. \com{Instead, here we employ abstract arguments.} We consider a more \com{specific} situation and \com{although we} obtain only short time-existence, \com{we} prove higher regularity of the evolving curves and manage to pass to the limit in $\eps$.

\begin{lemma}\label{lemma lipschitz force}
Let $\gamma_0\in C^{2,\alpha}(S^1;\Omega)$ be an embedded curve and 
\[
U := \{\gamma\in C^{2,\alpha}(S^1;\Omega)\:|\:\|\gamma-\gamma_0\|_{C^{2,\alpha}}<\rho\}
\]
for some $\rho>0$. Then the Peach-Koehler force $F^{PK, \eps}_\gamma$ is given by a Lipschitz map (uniformly in $\eps$) from $\overline U$ into the vector space $C^{0,\alpha}(S^1;\R^3)$ if $\rho$ is small enough, i.e.,
\[
\|F^{PK, \eps}_\gamma - F^{PK, \eps}_\eta \|_{C^{0,\alpha}(S^1;\R^3)} \leq L\,\|\gamma - \eta\|_{C^{2,\alpha}(S^1;\R^3)}
\]
where $L$ does not depend on $\eps>0$ (for sufficiently small $\eps$) but may depend on $\gamma_0$ and $\rho$.
\end{lemma}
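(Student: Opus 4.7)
The plan is to decompose $F^{PK,\eps}_\gamma$ according to the three contributions $f^{PK,\eps,(i)}$ from Proposition \ref{prop: variation} and Lemmas \ref{lemma: PKterm1}--\ref{lemma: PKterm3}, and to track the Lipschitz dependence on $\gamma$ in each piece. First I would verify that, for $\rho$ sufficiently small, every $\gamma\in\overline U$ inherits from $\gamma_0$ a uniform lower bound on the embeddedness radius, a uniform lower bound on $|\gamma'|$, and a uniform positive distance to $\partial\Omega$; this follows from lower-semi-continuity of the embeddedness radius under $C^{2,\alpha}$-convergence (Appendix \ref{appendix embeddedness}). Under these uniform bounds the geometric quantities $\gamma\mapsto \tau_\gamma,\ \vv H_\gamma,\ \psi_{\gamma;\eps},\ \sqrt{\det g_{\psi_{\gamma;\eps}}},\ \nu_\eps^\gamma$ depend smoothly on $\gamma$ and hence are Lipschitz on $\overline U$ into the natural H\"older target spaces, with constants independent of $\eps$.

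Next I would transfer the Lipschitz analysis to the singular strain $\S_\gamma$ and the corrector field $u_\gamma$. Lemma \ref{eq: estimaterepara} already provides that $\gamma\mapsto R_\gamma\circ\psi_{\gamma;\eps}$ is Lipschitz from $\overline U$ into $C^0(S^1\times S^1;\R^3)$ with constant of order $|\log\eps|+1$. Replaying its proof while also tracking the modulus of continuity in the spatial variables --- in the same spirit as the passage from Theorem \ref{thm: expansionK} to Corollary \ref{corollary lipschitz estimate} --- upgrades this to a Lipschitz estimate into $C^{0,\alpha}(S^1\times S^1;\R^3)$, still with constant of order $|\log\eps|$. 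For $u_\gamma$, note that the Neumann datum $-\S_\gamma\nu$ is defined on $\partial\Omega$, which is uniformly separated from every curve in $\overline U$; hence $\gamma\mapsto \S_\gamma|_{\partial\Omega}$ is real-analytic into $C^k(\partial\Omega)$ for every $k$, and standard elliptic regularity for the Neumann problem promotes this to Lipschitz dependence $\gamma\mapsto u_\gamma$ into $C^k(\overline{\Omega'};\R^3)$ for any $\Omega'\cc\Omega$ which avoids the dislocation curves --- and in particular on a neighborhood of $\partial B_\eps(\gamma)$, which is what is needed in $f^{PK,\eps,(2)}$ and $f^{PK,\eps,(3)}$.

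With these ingredients I would handle each $f^{PK,\eps,(i)}$ by mimicking the asymptotic analyses of Lemmas \ref{lemma: PKterm1}--\ref{lemma: PKterm3} at the level of differences. Substituting \eqref{eq: expansionS} and the Lipschitz estimates above into the integrals termwise, the singular $\eps^{-1}$ contributions still enter only through symmetric $\theta$-integrals that annihilate the leading singularity, while the remaining Lipschitz constants pick up at most a single factor of $|\log\eps|$ from $R_\gamma-R_\eta$. Dividing by $|\log\eps|$ as in the definition \eqref{eq Peach-Koehler force} of $F^{PK,\eps}$ should then yield
\[
\|F^{PK,\eps}_\gamma - F^{PK,\eps}_\eta\|_{C^{0,\alpha}(S^1;\R^3)} \leq L\,\|\gamma-\eta\|_{C^{2,\alpha}(S^1;\R^3)}
\]
with $L$ independent of $\eps\in(0,\eps_0)$.

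The hard part will be the $C^{0,\alpha}$-regularity (as opposed to merely $C^0$) of these Lipschitz estimates, since the natural H\"older semi-norms on the various remainders still grow like $|\log\eps|$. Ensuring that this logarithmic growth is absorbed by the single normalization $|\log\eps|^{-1}$ in $F^{PK,\eps}$ --- and not multiple times --- requires the same careful bookkeeping of cancellations against the area element of $\partial B_\eps(\gamma)$ and against odd symmetries in $\theta$ that was already carried out in the proofs of Lemmas \ref{lemma: PKterm1}--\ref{lemma: PKterm3}, now repeated for the differences $F^{PK,\eps}_\gamma - F^{PK,\eps}_\eta$ rather than for the single map $\gamma\mapsto F^{PK,\eps}_\gamma$.
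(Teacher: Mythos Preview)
Your proposal is correct and follows essentially the same approach as the paper's proof, which is itself only a brief sketch: the paper simply points to Theorem \ref{theorem asymptotics force} for the uniform $C^{0,\alpha}$-bound, invokes Corollary \ref{corollary lipschitz estimate} for the Lipschitz dependence with constant $O(|\log\eps|)$, notes that this factor is cancelled by the normalization $|\log\eps|^{-1}$ in \eqref{eq Peach-Koehler force}, and leaves the remaining bookkeeping to the reader. Your outline is in fact considerably more detailed than what the paper provides, correctly identifying the decomposition into the three $f^{PK,\eps,(i)}$, the elliptic regularity argument for $u_\gamma$, and the need to ensure that only a single $|\log\eps|$ factor survives before renormalization.
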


\begin{proof}
\com{
In Theorem \ref{theorem asymptotics force}, we have seen that $F^{PK,\eps}_\eta$ is $C^{0,\alpha}$-continuous along $\eta$ with constants depending only on quantities which can be estimated uniformly in $U$. It remains to show the Lipschitz estimate. This proof uses the Lipschitz estimate of Corollary \ref{corollary lipschitz estimate} and a close inspection of the remainder terms in the proof of Theorem \ref{theorem asymptotics force}. According to the renormalization by a factor $|\log\eps|^{-1}$ of the Peach-Koehler force in \eqref{eq Peach-Koehler force}, the factor of $|\log\eps|$ in Corollary \ref{corollary lipschitz estimate} disappears. As we have presented many similar arguments in detail, we leave the details to the reader.
}
\end{proof}

\com{C}onsider the scalar product
\[
\langle u, v\rangle_{(L^2 + \delta H^1)(\gamma)} := \int_\gamma \left(uv + \delta\,\langle \nabla^\gamma u, \nabla^\gamma v\rangle\right)\d\H^1 = \int_\gamma \left(uv + \delta \langle \nabla u, \tau\rangle\,\langle \nabla v, \tau\right)\rangle\d\H^1,
\]
where $\nabla^\gamma$ denotes the tangential gradient along $\gamma$. With respect to this singularly perturbed $L^2$-inner product for the time-derivative, the gradient flow equation becomes
\[
(1- \delta\,\Delta_{\Gamma(t,\cdot)}) \Gamma_t = F^{PK,\eps}_{\Gamma(t,\cdot)},
\]
or, equivalently,
\begin{equation}\label{eq h1 gf eps}
 \Gamma_t = (1- \delta\,\Delta_{\Gamma(t,\cdot)})^{-1}\,F^{PK,\eps}_{\Gamma(t,\cdot)}.
\end{equation}
We keep $\delta>0$ fixed. \com{An interesting open question is w}hether or not it is possible to take $\delta\to 0$ and recover the $L^2$-gradient flow, possibly after taking $\eps\to 0$. We provide brief observations in the linear case in Appendix \ref{appendix l2 h1}. Also, technical results on the operators $-\delta \Delta_\gamma +1$ are collected there.

\begin{corollary}\label{corollary motion h1}
Solutions to the $L^2+\delta\cdot H^1$-gradient flow \eqref{eq h1 gf eps} of $\Eeff$ exist on a time interval $[0,T_\delta]$, where $T_\delta \geq T\delta$ for a constant $T$ independent of $\eps$ and $\delta$, and they converge to the $L^2 + \delta\cdot H^1$-gradient flow of the arclength functional in $C^0\big([0,T_\delta]; C^{2,\beta}(S^1)\big)$ for all $\beta<\alpha$ as $\eps \to 0$. \com{To be precise,}
\begin{align*}
\|\Gamma_{\delta,\eps}(t) - \Gamma_\delta(t)\|_{C^{2,\beta}}  &\leq \frac{C\, t}{\delta\,|\log\eps|^{1-\frac\beta\alpha}}\,\exp\left(Ct\right),\\
\text{and}\qquad \|\partial_t\Gamma_{\delta,\eps}(t) - \partial_t\Gamma_\delta(t)\|_{C^{2,\beta}}  &\leq \frac{C\, t}{\delta^2\,|\log\eps|^{1-\frac\beta\alpha}}\,\exp\left(Ct\right)
\end{align*}
for suitable constants $C$.
\end{corollary}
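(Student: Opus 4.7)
The plan is to convert \eqref{eq h1 gf eps} into a Banach-space valued ODE in $C^{2,\alpha}(S^1;\R^3)$ and apply the Picard--Lindel\"of theorem. Define
\[
\Phi_\eps:U\to C^{2,\alpha}(S^1;\R^3),\qquad \Phi_\eps(\gamma):=\bigl(1-\delta\Delta_\gamma\bigr)^{-1} F^{PK,\eps}_\gamma,
\]
where $U$ is the $C^{2,\alpha}$-neighborhood of $\gamma_0$ from Lemma \ref{lemma lipschitz force}. First I would show that $\Phi_\eps$ is Lipschitz on $U$ with constant bounded by $C/\delta$, uniformly in sufficiently small $\eps$: Lemma \ref{lemma lipschitz force} controls $\gamma\mapsto F^{PK,\eps}_\gamma$ as a map $U\to C^{0,\alpha}$ uniformly in $\eps$, while the mapping properties of $(1-\delta\Delta_\gamma)^{-1}:C^{0,\alpha}\to C^{2,\alpha}$ and its Lipschitz dependence on $\gamma$ are provided by Appendix \ref{appendix l2 h1}, with norm scaling like $1/\delta$. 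Picard--Lindel\"of then delivers a unique $C^1$-solution $\Gamma_{\delta,\eps}:[0,T_\delta]\to U$ of \eqref{eq h1 gf eps} with $T_\delta\gtrsim \delta$ (since $U$ has fixed radius while the Lipschitz constant of $\Phi_\eps$ scales as $1/\delta$), uniformly in $\eps$. The same argument applied to the limit equation $\partial_t\Gamma_\delta=\frac{|b|^2}{4\pi}(1-\delta\Delta_{\Gamma_\delta})^{-1}\vv H_{\Gamma_\delta}$ yields $\Gamma_\delta$ on the same interval.

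For the convergence rate, I set $D(t):=\Gamma_{\delta,\eps}(t)-\Gamma_\delta(t)$ and decompose
\[
\partial_tD=\bigl[\Phi_\eps(\Gamma_{\delta,\eps})-\Phi_\eps(\Gamma_\delta)\bigr]+\bigl(1-\delta\Delta_{\Gamma_\delta}\bigr)^{-1}\Bigl(F^{PK,\eps}_{\Gamma_\delta}-\tfrac{|b|^2}{4\pi}\vv H_{\Gamma_\delta}\Bigr).
\]
The first bracket is controlled in $C^{2,\beta}$ by $(C/\delta)\|D\|_{C^{2,\beta}}$ (the Lipschitz estimate of Lemma \ref{lemma lipschitz force} also holds in $C^{2,\beta}$ for $\beta\leq\alpha$), while the second bracket equals $(1-\delta\Delta_{\Gamma_\delta})^{-1}R^{PK,\eps}_{\Gamma_\delta}$ by Theorem \ref{theorem asymptotics force}. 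The key observation is that although $\|R^{PK,\eps}\|_{C^{0,\alpha}}\leq C$ is merely bounded, convex interpolation between the two estimates of \eqref{eq: PKestimates} (noting that the proofs of Lemmas \ref{lemma: PKterm1}--\ref{lemma: PKterm3} actually give $\|R^{PK,\eps}\|_{L^\infty}\lesssim|\log\eps|^{-1}$) yields
\[
\|R^{PK,\eps}_{\Gamma_\delta}\|_{C^{0,\beta}}\leq C\,\|R^{PK,\eps}\|_{L^\infty}^{1-\beta/\alpha}\,\|R^{PK,\eps}\|_{C^{0,\alpha}}^{\beta/\alpha}\leq C\,|\log\eps|^{-(1-\beta/\alpha)},
\]
so the second bracket is bounded in $C^{2,\beta}$ by $C\delta^{-1}|\log\eps|^{-(1-\beta/\alpha)}$. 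Gronwall's lemma applied to $\|D(t)\|_{C^{2,\beta}}$ then produces the stated estimate. The bound on $\partial_t\Gamma_{\delta,\eps}-\partial_t\Gamma_\delta$ follows by re-inserting this closeness into the equations $\partial_t\Gamma=\Phi_\eps(\Gamma)$ and its limit analogue and using the Lipschitz estimate of $\Phi_\eps$ one more time, producing the advertised extra factor of $1/\delta$.

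The main technical obstacle I anticipate is making precise the Lipschitz dependence of the inverse operator $(1-\delta\Delta_\gamma)^{-1}$ on the parameter curve $\gamma$ in H\"older norms with transparent $\delta$-scaling; this amounts to Schauder theory on a $1$-dimensional manifold combined with differentiation of the induced metric $g_\gamma$ in $\gamma$, and the careful bookkeeping of constants in $\delta$ is essentially the content of Appendix \ref{appendix l2 h1}. A secondary point is that interpolation of H\"older seminorms between $L^\infty$ and $C^{0,\alpha}$ must be carried out on curves of uniformly controlled geometry; the uniform embeddedness of $\Gamma_\delta(t)$ and $\Gamma_{\delta,\eps}(t)$ needed here follows by continuity and by bootstrapping the smallness of $D$ near $t=0$, which keeps the interpolation constants uniform in $t$ and $\eps$.
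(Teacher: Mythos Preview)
Your proposal is correct and follows essentially the same route as the paper: cast \eqref{eq h1 gf eps} as a Banach-space ODE via $\Phi_{\delta,\eps}(\gamma)=(1-\delta\Delta_\gamma)^{-1}F^{PK,\eps}_\gamma$, prove a $C/\delta$-Lipschitz bound, apply Picard--Lindel\"of, then interpolate the $L^\infty$- and $C^{0,\alpha}$-bounds on $R^{PK,\eps}$ to obtain the $|\log\eps|^{-(1-\beta/\alpha)}$ rate and close with Gr\"onwall. One small correction: the Lipschitz dependence of $(1-\delta\Delta_\gamma)^{-1}$ on $\gamma$ is not stated in Appendix~\ref{appendix l2 h1} (which only treats a fixed operator); the paper derives it inside the proof of the Corollary via the resolvent identity $(1-\delta\Delta_\gamma)^{-1}-(1-\delta\Delta_\eta)^{-1}=(1-\delta\Delta_\gamma)^{-1}\,\delta(\Delta_\gamma-\Delta_\eta)\,(1-\delta\Delta_\eta)^{-1}$, which is exactly the computation you anticipate in your final paragraph.
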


\begin{proof}
Let $0<\delta\leq 1$, \com{$\eps>0$} and define $\Phi: U\to C^{2,\alpha}(S^1;\R^3)$ by
\begin{equation}\label{eq definition Phi}
\Phi_{\delta,\eps}(\gamma) := (1- \delta\cdot \Delta_\gamma)^{-1}\,F^{PK,\eps}_\gamma.
\end{equation}
where $U$ is a suitable open neighborhood of $\gamma_0$ in $C^{2,\alpha}(S^1; \R^3)$, namely, all curves in $U$ need to be embedded in $\Omega$ with embeddedness radius uniformly bounded away from $0$ (and a uniform lower distance from $\partial\Omega$), satisfy uniform upper and lower bounds on $|\gamma'|$ and uniform bounds in $C^{2,\alpha}$. Then \com{we use the estimates of Theorem \ref{theorem regularizing identity} for the regularizing effect of the operator $(1-\delta\Delta)$ to show that}
\begin{align*}
\|\Phi_{\delta,\eps}(\gamma) - \Phi_{\delta,\eps}(\eta)&\|_{C^{2,\alpha}(S^1;\R^3)}  = \left\|(1- \delta\cdot \Delta_\gamma)^{-1}\,F^{PK,\eps}_\gamma - (1- \delta\cdot \Delta_\eta)^{-1}\,F^{PK,\eps}_\eta\right\|_{C^{2,\alpha}(S^1;\R^3)}\\
	&\leq \left\|(1- \delta\cdot \Delta_\gamma)^{-1}\,\left(F^{PK,\eps}_\gamma - F^{PK,\eps}_\eta\right)\right\|_{C^{2,\alpha}}\\
		&\quad + \left\| \left(( 1-\delta \cdot \Delta_\gamma)^{-1} - (1-\delta \cdot\Delta_\eta)^{-1}\right)\,F^{PK,\eps}_\eta\right\|_{C^{2,\alpha}}\\
	&\leq \frac{C}\delta\,\left\|F^{PK,\eps}_\gamma - F^{PK,\eps}_\eta\right\|_{C^{0,\alpha}}\\
		&\quad + \left\|(1- \delta\cdot \Delta_\gamma)^{-1} - (1- \delta\cdot \Delta_\eta)^{-1}\right\|_{L(C^{0,\alpha}; C^{2,\alpha})} \|F^{PK,\eps}_\eta\|_{C^{0,\alpha}}\\
	&\leq \frac{C}\delta\,\|\gamma - \eta \|_{C^{2,\alpha}} + C\, \left\|(1- \delta\cdot \Delta_\gamma)^{-1} - (1- \delta\cdot \Delta_\eta)^{-1}\right\|_{L(C^{0,\alpha}; C^{2,\alpha})}\numberthis\label{eq lemma 5.4 1}
\end{align*}
\com{due to Lemma \ref{lemma lipschitz force} and using that} the operators $\Delta_\gamma$ are uniformly elliptic with uniformly $C^{0,\alpha}$-continuous coefficients for $\gamma$ close to $\gamma_0$ and since the Peach-Koehler force on all curves $\gamma$ in $U$ is uniformly bounded in $C^{0,\alpha}$. The behavior of the constants as $\frac1\delta$ is obtained in Theorem \ref{theorem regularizing identity} in the appendix. It remains to estimate the norm difference of the regularizing operators.

\com{In view of Theorem \ref{theorem regularizing identity} w}e observe that if
$f\in C^{0,\alpha}(S^1)$ and
\[
(-\delta \Delta_\gamma+1) u^\gamma = (-\delta \Delta_\eta +1 ) u^\eta = f, 
\]
then $\|u^\gamma\|_{C^{2,\alpha}(S^1)} + \|u^\eta\|_{C^{2,\alpha}(S^1)} \leq \frac C\delta\,\|f\|_{C^{0,\alpha}}$, where $C$ is a uniform constant depending only on $U$. Furthermore,
\[
0 = (-\delta \Delta_\gamma +1) u^\gamma - (-\delta \Delta_\eta +1) u^\eta = -\delta\,\Delta_\gamma(u^\gamma-u^\eta) + \delta \,(\Delta_\gamma - \Delta_\eta) u^\eta + (u^\gamma - u^\eta),
\]
so that $v := u^\gamma-u^\eta$ satisfies
\[
-\delta\,\Delta_\gamma v + v = \delta\,(\Delta_\eta - \Delta_\gamma)\,u^\eta.
\]
\com{Since}
\[
\|\delta\,(\Delta_\eta - \Delta_\gamma)u^\eta\| \leq\,C\delta\,\|\gamma-\eta\|_{C^{2,\alpha}}\|u^\eta\|_{C^{2,\alpha}} \leq C\delta\,\|\gamma-\eta\|_{C^{2,\alpha}}\,\frac C\delta\,\|f\|_{C^{0,\alpha}} = C\,\|\gamma-\eta\|_{C^{2,\alpha}}\,\|f\|_{C^{0,\alpha}},
\]
we obtain that 
\begin{equation}\label{eq solution operator difference}
\left\|(1- \delta\cdot \Delta_\gamma)^{-1} - (1- \delta\cdot \Delta_\eta)^{-1}\right\|_{L(C^{0,\alpha}; C^{2,\alpha})} \leq C\, \|\gamma - \eta\|_{C^{2,\alpha}},
\end{equation}
and \com{by \eqref{eq lemma 5.4 1}}
\[
\|\Phi_{\delta,\eps}(\gamma) - \Phi_{\delta,\eps}(\eta)\|_{C^{2,\alpha}(S^1;\R^3)} \leq C\delta^{-1}\,\|\gamma-\eta\|_{C^{2,\alpha}(S^1;\R^3)}.
\]
By the Picard-Lindel\"off theorem \cite[Section 4.II]{konigsberger2013analysis}, there exists a time $t'>0$ (depending on $\eps$ and $\delta$) such that the ODE 
\begin{equation}\label{eq gradient flow ode}
\begin{pde}
\frac{d}{dt}\Gamma &= \Phi_{\delta,\eps}(\Gamma) &t>0,\\
\Gamma &= \gamma_0 &t=0
\end{pde}
\end{equation}
has a solution in $C^1\left([0,t'), U\right)$. \com{We observe that} solutions exist as long as $\Gamma(t)\in U$, \com{so using the fact} $\|\Phi_{\eps,\delta}(\gamma)\|_{C^{2,\alpha}}\leq C\,\delta^{-1}$ for all $\gamma \in U$, it follows that 
\[
\|\Gamma(t) - \gamma_0\|_{C^{2,\alpha}} = \left\|\int_0^t \Phi_{\eps,\delta}(\Gamma(t))\dt \right\|_{C^{2,\alpha}} \leq \int_0^t \| \Phi_{\eps,\delta}(\Gamma(t)) \|_{C^{2,\alpha}}\dt \leq \frac{C\,t}\delta \com{<\rho}.
\]
\com{Since} $U$ contains a $C^{2,\alpha}$-ball of some radius $\rho>0$ around $\gamma_0$, we \com{obtain} $\Gamma(t)\in U$ for all $t< \frac{\delta \rho}C$, such that \com{the time of existence} $T_\delta$ \com{is lower bounded by} $\frac{\rho}C\,\delta =:T\delta$. \com{In view of \eqref{eq definition Phi}}, we remark that \eqref{eq gradient flow ode} is precisely the $L^2+ \delta\cdot H^1$-gradient flow of $\Eeff$, therefore we have proved existence to the corresponding solution equation on a short time interval.
 
Finally, \com{if $\beta<\alpha$ then we have}
\begin{align*}
\bigg\|\Phi_{\eps,\delta}(\gamma) &- (1-\delta\Delta_\gamma)^{-1}\frac{|b|^2}{4\pi}\vv H_\gamma\bigg\|_{C^{0,\beta}}	\\
\leq &\left\| (1-\delta\Delta_\gamma)^{-1} \left(F^{PK,\eps}_\gamma - \frac{|b|^2}{4\pi} \vv H_{\com\gamma}\right)\right\|_{C^{0,\beta}} \\
	\leq &C\,\left\|F^{PK,\eps}_\gamma - \frac{|b|^2}{4\pi}\vv H_\gamma\right\|_{C^{0,\beta}}\\
	\leq &C\, \left(\left\|F^{PK,\eps}_\gamma - \frac{|b|^2}{4\pi}\vv H_\gamma\right\|_{L^\infty} + \left[F^{PK,\eps}_\gamma - \frac{|b|^2}{4\pi} \vv H_\gamma\right]_{C^{0,\alpha}}^\frac\beta\alpha \left\|F^{PK,\eps}_\gamma - \frac{|b|^2}{4\pi}\vv H_\gamma\right\|_{L^\infty}^{1-\frac\beta\alpha} \right)\numberthis \label{eq some equation}
\end{align*}
for all $\gamma\in U$ due to Theorem \ref{theorem regularizing identity}, so \com{denoting} by $\Gamma_{\delta,\eps}$ the solution of \eqref{eq gradient flow ode} and by $\Gamma_\delta$ the solution of 
\[
\begin{pde}
\frac{d}{dt}\Gamma &= (1-\delta\Delta_{\Gamma(t)})^{-1}\frac{|b|^2}{4\pi}\vv H_{\Gamma(t)} &t>0\\
\Gamma &= \gamma_0 &t=0,
\end{pde}
\]
we observe that
\begin{align*}
\|\Gamma_{\delta,\eps}(t) - \Gamma_\delta(t)\|_{C^{2,\beta}} &= \left\|\int_0^t \left[\Phi_{\delta,\eps}(\Gamma_{\delta,\eps}(s)) -  (1-\delta\Delta_{\Gamma(s)})^{-1}\frac{|b|^2}{4\pi}\vv H_{\Gamma_\delta(s)}\right]\ds\right\|_{C^{2,\beta}}\\
	&\leq \int_0^t \big\| \Phi_{\delta,\eps}(\Gamma_{\delta,\eps}(s)) - \Phi_{\delta,\eps}(\Gamma_{\delta}(s))\big\|_{C^{2,\beta}}\ds\\ 
	&\quad + \int_0^t \big\|(1-\delta\Delta_{\Gamma(s)})^{-1} \left( F^{PK, \eps}(\Gamma(s)) - \frac{|b|^2}{4\pi}\vv H_{\Gamma(s)}\right)\big\|_{C^{2,\beta}}\ds\\
	&\leq \int_0^t C\,\|\Gamma_{\delta}(s) - \Gamma_{\delta,\eps}(s)\|_{C^{2,\beta}}\ds\\
	&\quad + \frac{C}\delta\int_0^t \|F^{PK,\eps}(\Gamma(s)) - \frac{|b|^2}{4\pi}\vv H_{\Gamma(s)}\|_{C^{0,\beta}}\ds\\
	&\leq C\int_0^t \|\Gamma_{\delta}(s) - \Gamma_{\delta,\eps}(s)\|_{C^{2,\beta}}\ds + \frac{C}{\delta\,|\log\eps|^{1-\frac\beta\alpha}}t.
\end{align*}
\com{In the last inequality, we used \eqref{eq some equation} and the estimates on $F^{PK,\eps}-\frac{|b|^2}{4\pi}\vv H$ from Theorem \ref{theorem asymptotics force}.} \com{B}y Gr\"onwall's Lemma we deduce that
\[
\|\Gamma_{\delta,\eps}(t) - \Gamma_\delta(t)\|_{C^{2,\beta}}  \leq \frac{C\, t}{\delta\,|\log\eps|^{1-\frac\beta\alpha}}\,\exp\left(Ct\right)
\]
for all $0\leq t\leq T_\delta$. \com{Letting} $\eps\to 0$, \com{we obtain} $\Gamma_{\delta,\eps}\to \Gamma_\delta$ in $C^{0}\big([0,T_\delta]; C^{2,\beta}(S^1)\big)$. 
\com{Furthermore, since} 
\begin{align*}
\| \partial_t\Gamma_\delta - \partial_t\Gamma_{\delta,\eps}\|_{C^{2,\beta}} &= \|(1-\delta_{\Gamma_\delta})^{-1}\vv H_\Gamma - (1-\delta_{\Gamma_{\delta,\eps}})^{-1}F^{PK,\eps}\|_{C^{2,\beta}}
\end{align*}
\com{and $\Gamma_{\delta,\eps} \to \Gamma_\delta$ in $C^0([0,T_\delta],C^{2,\beta}(S^1))$ for all $\beta<\alpha$, we conclude that also the time derivatives converge, again using Theorem \ref{theorem regularizing identity} and \eqref{eq solution operator difference}.}
\end{proof}

\begin{remark}\label{remark physical h1}
\com{The} same proof could be applied to an equation of the form
\[
\begin{pde}
\partial_t\Gamma &= m\left( \frac{\Gamma'}{|\Gamma'|}, b, (1- \delta\cdot \Delta_\Gamma)^{-1}\,F^{PK,\eps}_\Gamma\right)&t>0,\\
\Gamma&= \gamma_0 &t=0,
\end{pde}
\]
with a mobility $m$ like in Remark \ref{remark physical dissipation} to yield an {\em unconditional} convergence result as $\eps\to 0$. This versatility comes from the fact \com{the regularized dissipation transformed the PDE problem into a Banach-space valued ODE. We did not use parabolicity anywhere in the proofs and the same arguments would go through if we reversed the sign of the time derivative, evolving curves in the direction of {\em increasing} energy. The key tool are error bounds in H\"older spaces.}
\end{remark}

\begin{remark}
Our choice of $H^1$-type dissipation does not preserve orthogonality of the velocity. A different $H^1$-type dissipation which automatically gives a normal velocity field is proposed in \cite{hudson}, and long time existence for the evolution equation is proved. The results \com{in \cite{hudson}} differ from ours in several ways:
\begin{enumerate}
\item The energy is regularized in a different way and considered in $\R^3$ instead of a bounded domain $\Omega$,
\item the evolving curves are only $C^{1,\alpha}$-smooth in space, and
\item relevant bounds degenerate as $|\log\eps|\,\eps^{-1}\,\delta^{-1}$. While our bounds also degenerate as $\delta\to 0$ and the logarithmic factor is a matter of normalization, we improved on the degeneracy in $\eps$ and can pass to the limit.
\end{enumerate}
\end{remark}

\section{Discussion of Results}\label{section conclusion}

While we always phrased our results in terms of a single curve embedded in a domain $\Omega$, the same arguments apply to a system of finitely many smooth embedded curves that do not touch, with potentially different Burgers vectors. Many problems in the asymptotic study of dislocation dynamics remain open.

\begin{itemize}
\item We only prove that $F^{PK,\eps} = \frac{|b|^2}{4\pi}\vv H + R$ where $R$ is small in $L^\infty$ and bounded in $C^{0,\alpha}$ for $C^{2,\alpha}$-curves. It is likely that showing that $R$ is in fact small in $C^{0,\alpha}$ would allow for more satisfying results in the time-dependent setting as well.

\item Our argument applies only to dislocation loops inside a crystal (or a union of such loops). These are one class of divergence free line defect, while another one is given by lines ending at the domain boundary. Our asymptotic expansion is invalid at such boundary points, and it is not clear whether dislocation curves meeting the boundary of a crystal at a right angle evolve by the same flow complemented with a Neumann condition.

Similarly, a dislocation loop may remain regular but move through points of self-contact under curve-shortening flow before developing singularities, especially when starting from knotted configurations. The asymptotic expansion \com{ceases} to be valid near crossings and the local evolution law of curve shortening flow does not `see' such points which become singular only in a non-local fashion. 

It is to be expected that also the core-radius regularization \com{ceases} to be valid at such configurations and that atomistic effects should influence the dynamics.

\item As we observed in Theorem \ref{theorem: limitenergy}, the elastic energy is dominated by the distortion field close to the dislocation line. For straight dislocations, this term is large but independent of the location of dislocations, so that the Peach-Koehler force only stems from the interaction of different dislocation lines and the crystal boundary \com{and thus} the fine structure of the core region is irrelevant.

On the other hand, the Peach-Koehler force \com{on curved dislocations} is also determined by the local properties of the dislocation line to highest order. In this setting, the cut-off approach may not be justified since it ignores the fine structure of the region close to the dislocation, which is precisely where the largest part of the energy and force are concentrated. Atomistic or non-linearly continuum-elastic models might be more appropriate in this region, especially if the dislocation curves are not assumed to be as smooth as in this article or when they collide or interact with other defects.

\item Instead of simplified linearized elasticity, a more realistic mathematical model would be linearized isotropic elasticity where the energy of an elastic deformation $\beta$ with dislocation measure $\mu$ is given \com{by}
\[
\qquad\qquad \E_{\lambda,\mu,\eps} (\beta) := \frac12\int_{\Omega_\eps} \left(2\mu\,|e(\beta)|^2 + \lambda\,\tr(\beta)^2\right) \dx, \qquad\ e(\beta) := \frac{\beta + \beta^T}2, \quad  \mu>0, \:\lambda+2\mu>0.
\]
Unlike the simplified linearized elastic energy, this functional distinguishes between dislocations of edge and screw type (Burgers vector orthogonal/parallel to the tangent line of the dislocation) since for a rank-one matrix we have
\[
|b\otimes v|^2 = |b|^2\,|v|^2
\]
independently of how $b, v$ are oriented, but
\[
\big|e(b\otimes v)\big|^2 = \frac14\left[|b\otimes v|^2 + 2\,\left\langle b\otimes v, v\otimes b\right\rangle + |v\otimes b|^2\right] = \frac{|b|^2|v|^2 + 2\langle b, v\rangle^2}2
\]
which takes into account the orientation between $b$ and $v$. The limiting energy becomes an anisotropic line tension where edge dislocations ($b\bot v$) are cheaper than screw dislocations ($b\parallel v$). The anisotropy stems from the choice of the Burgers vector, not the elastic energy.

Even more realistically, crystals should be assumed to be anisotropic. \com{Atoms in crystalline materials are arranged on a lattice which only admits a finite symmetry group (for example face- or body centered cubic lattices). Polycrystalline materials, which are composed of many small crystal grains which that by the orientation of the grid, may appear isotropic on larger scales, but not on the relevant scale for grid defects such as dislocations.}

\item \com{Going beyond} a pure gradient flow, \com{one may ask} how dislocations move under an applied stress. \com{A} relevant case \com{is} that of an applied surface force on the domain boundary. Assuming a slow enough evolution to be quasi-static, we can model this as the gradient flow of the time dependent energy \com{(see Remark \ref{remark forcing})}
\[
\F_\eps(\mu, t) := \inf\left\{ \E_{\lambda,\mu,\eps}(\beta) - \int_\Sigma \langle f(t,x), \beta^{lift}\rangle \d\H^2_x\:\bigg|\: \beta\in \A(\mu_\gamma)\right\},
\]
where $\Sigma \subset\partial\Omega$ is a suitable subset of the boundary and $f\in L^2(\Sigma)$ is the applied force. As in the present case, we assume that geometric effects dominate the applied force to highest order, and that in the limit $\eps\to 0$ the pure relaxation of the dislocation lines dominates. If the forcing is strong, it seems reasonable to scale it appropriately with $\eps$ and \com{instead} use the energy functional
\[
\widetilde\F_\eps(\mu, t) := \inf\left\{ \E_{\lambda,\mu,\eps}(\beta) - |\log\eps|\int_\Sigma \langle f(t,x), \beta^{lift}\rangle \d\H^2_x\:\bigg|\: \beta\in \A(\mu_\gamma) \right\}.
\]
In this scaling, we expect forcing and geometric motion to interact.

\item The asymptotic expansion \com{of Theorem \ref{theorem asymptotics force}} \com{ceases} to be valid when two curves (or two segments of the same curve) come closer than $|\log\eps|^{-1}$ to each other when the interaction effects reach the same order as curvature effects. Since $|\log\eps|^{-1}$ is only moderately small even for small $\eps$, this situation may be relevant to the study of real crystals with many dislocations.

We note that in a phase-field model for planar crystal dislocations with a single Burgers vector in simplified linearized elasticity \cite{MR1935021,MR2231783,MR2178227}, interaction with dislocations penetrating the single slip plane can have large influence on the dynamics in certain scaling limits \cite{dondl2017effect}. More precisely, it is shown that the interaction of dislocations in different gliding systems can have a dissipation-like effect that allows even slightly curved dislocations (and thus, slip regions) to persist over long time scales. This confirms the link of dislocations to plastic (rather than elastic) deformation, and suggests that the interaction between dislocations might be a key component in plasticity.

\item The $H^1$-type dissipation \com{in Section \ref{section h1 dissipation}} was not physically motivated, and was chosen \com{for mathematical purposes}. Even the $L^2$-gradient flow with constant mobility as a motion law does not capture the structure of crystalline materials. 

It is generally agreed upon that edge dislocations should only be able to move only in the plane spanned by the dislocation tangent and the Burgers vector, while screw dislocations can move in certain crystallographic directions dictated by the crystal and move along one of finitely many vectors according to a maximal dissipation criterion \cite{MR3323554}. Dislocations of mixed type can, for this purpose, be considered as edge-type unless the Burgers vector and dislocation tangent are almost parallel. 

Note that such a motion law would have tremendous influence on the limiting motion law: Assume, for example, that $\gamma$ is a planar curve and that $\Omega$ is symmetric under reflection about the plane. Assume furthermore that $\gamma$ is an edge dislocation with a Burgers vector $b$ which is orthogonal to the plane $\gamma$ lies in. Then, by symmetry, the force acting on $\gamma$ will be everywhere in plane, but $\gamma$ can only move by tangential reparametrization and in a direction orthogonal to the force. \com{T}hus the curve is stationary for all times, while it would contract to a round point in finite time under curve shortening flow (see \cite{MR840401}).


\item We considered the gradient flow evolution of dislocations under the effective energy, which is quasi-stationary in the sense that the elastic deformation $\beta$ was always assumed to be energy-optimal. This \com{requires} that the relaxation time of the crystal is much faster than dislocation movement.
\end{itemize}

Some of these issues will be addressed in a forth-coming article \cite{nextone}.

\section*{Acknowledgments}

The authors thank Giovanni Leoni and Ethan O'Brien for long discussions. J.\ Ginster and S.\ Wojtowytsch thank Giovanni Leoni for his support on this project and as a mentor in general. The authors acknowledge the Center for Nonlinear Analysis where part of this work was carried
out. The research of I.\ Fonseca was partially funded by the National Science Foundation under Grants No. DMS-1411646 and DMS-1906238.

\appendix
\addtocontents{toc}{\protect\setcounter{tocdepth}{1}}

\section{Embeddedness Radius}
\label{appendix embeddedness}

\subsection{Basic ideas}
Let $\gamma: S^1 \to \R^3$ be a closed $C^{1,\alpha}$-curve, for $\alpha>1/2$, which we assume to be parametrized by arc-length. Then the curve $\gamma':\com{S^1_L}\to S^2$ is $C^{0,\alpha}$-continuous and has a compact image of Hausdorff dimension at most $\frac1\alpha <2$ \cite[Proposition 4.1.7]{attouch2014variational}. In particular, there exist a vector $N\in S^2$ and $R>0$ such that $B_R(N)\cap \gamma'(S^1) = \emptyset$, and therefore $B_{R/2}(N) \cap \eta'(S^1) = \emptyset$ for all $\eta$ which are sufficiently close to $\gamma$ in $C^{1,\alpha}(S^1; \R^3)$. For any such curve $\eta$, the vector fields $\eta', n_1, n_2:S^1\to\R^3$ defined by
\begin{equation}\label{eq normal frame}
 n_1(s) := \frac{N - \langle N, \eta'(s)\rangle \,\eta'(s)}{\big| N - \langle N, \eta'(s)\rangle \,\eta'(s)\big|}, \qquad  n_2(s) := \eta'(s) \wedge n_1(s)
\end{equation}
form an orthonormal basis of $\R^3$ at every $s\in S^1$ \com{and are} $C^{0,\alpha}$-H\"older continuous in $s$, see Figure \ref{fig: cylindricalcoordinates} for an illustration. 
\begin{figure}
\centering
 \includegraphics{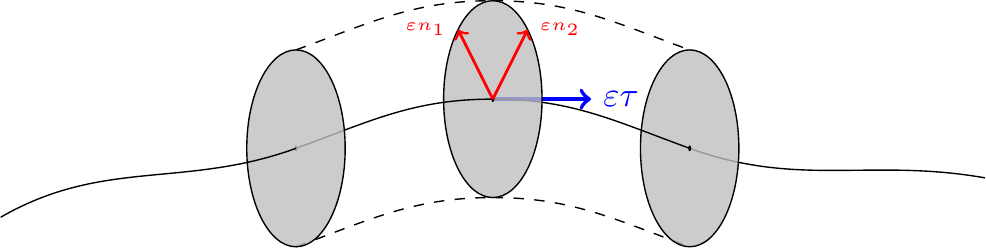}
 \caption{Sketch of the $\varepsilon$-tube around the curve $\gamma$, the tangent $\tau$, and the normal fields $n_1$ and $n_2$ as constructed below.} \label{fig: cylindricalcoordinates}
\end{figure}
 Now we consider the map 
\begin{equation}\label{eq coordinates curve}
\Phi_\eta:S^1\times \R^2 \to \R^3, \qquad \Phi_\eta(s, \rho, \sigma) := \eta(s)  + \rho\,n_1(s) + \sigma\,n_2(s).
\end{equation}
\com{W}e note that $\Phi$ has the same degree of smoothness as $\eta'$, so
if $\eta\in C^2(S^1)$, then $\Phi_\eta$ is $C^1$-smooth and we can compute 
\[
\partial_s\Phi_\eta(s, 0, 0) = \eta'(s), \qquad \partial_\rho\Phi_\eta(s,0,0) = n_1(s), \qquad \partial_\sigma\Phi_\eta(s,0,0) = n_2(s).
\]
\com{Because} $D\Phi(s,0,0)$ is invertible for every $s\in S^1$, by the inverse function theorem \cite[Theorem 15.2]{deimling2010nonlinear} $\Phi$ is locally invertible at $(s,0,0)$ for every $s\in S^1$. Since $S^1$ is compact and $\eta$ is embedded, this can be strengthened to the following well-known statement:

{\em There exist open sets $U\subset S^1\times \R^2$ and $V\subset \R^3$ such that $S^1\times\{(0,0)\}\subset U$ and $\Phi:U\to V$ is a $C^1$-diffeomorphism.}

Again using the compactness of $S^1$ and a standard result in topology (sometimes known as the tube lemma \cite[Lemma 26.8]{munkres2014topology}), we may assume that $U = S^1\times D_r$ where $D_r$ denotes the disk of radius $r>0$ around the origin in $\R^2$.

\begin{definition}\label{definition embeddedness radius}
Let $\eta\in C^2(S^1;\R^3)$ and $n_1, n_2\in C^1(S^1;\R^3)$ be vector fields such that $\eta', n_1, n_2$ form an orthonormal basis at every point $s\in S^1$. \com{Consider} the map $\Phi_{\eta}$ as \com{in \eqref{eq coordinates curve}}. We define the {\em embeddedness radius} of the triple $(\eta, n_1, n_2)$ as the supremum of all $r>0$ for which 
\[
\Phi_{\eta}: S^1\times D_r\to \Phi_\eta(S^1\times D_r)\subset \R^3
\]
is a diffeomorphism.
\end{definition}

It is easy to see that the embeddedness radius is in fact independent of the choice of $n_1, n_2$ (which, for fixed $s$, only corresponds to a rotation in the plane) and thus we can speak of the {\em embeddeness radius of $\eta$}.

\subsection{Stability under convergence}

As we have seen, the embeddedness radius of every embedded $C^2$-curve $\gamma:S^1\to\R^3$ is positive. Our concern now is whether two curves $\gamma, \eta$ which are close in the $C^2$-topology also have similar embeddedness radii. The following theorem gives a positive answer to \com{this question} under an additional H\"older assumption.

\begin{theorem}
Let $\gamma\in C^{2,\alpha}(S^1; \R^3)$ \com{also} parametrized by arc-length. \com{Let} $N\in S^2$ \com{be} such that $B_R(N) \cap \gamma'(S^1) = \emptyset$ \com{for some $R>0$, and let} $r>0$ \com{be} such that $\Phi_\gamma:S^1\times D_r\to \Phi_\gamma(S^1\times D_r)$ is a diffeomorphism. 

Then for all $\eps>0$ there exists $\delta>0$, depending on $R$, $\gamma$ and $\alpha$, such that the following is true:
If $\eta \in C^{2,\alpha}(S^1; \R^3)$ \com{is} such that $\|\eta-\gamma\|_{C^{2}} < \delta$ and $\|\eta\|_{C^{2,\alpha}} \leq \|\gamma\|_{C^{2,\alpha}}+1$, then $\Phi_\eta: S^1\times D_{r-\eps}\to \Phi_\eta(S^1\times D_{r-\eps})$ is a diffeomorphism.
\end{theorem}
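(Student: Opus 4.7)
The plan is to argue by contradiction, combining local invertibility of $\Phi_\eta$ (which follows from $C^2$-closeness of $\eta$ to $\gamma$) with a uniform quantitative injectivity radius (which needs the $C^{2,\alpha}$ bound). First I would observe that $\eta\mapsto \Phi_\eta$ is continuous from $C^2(S^1;\R^3)$ to $C^1(S^1\times\R^2;\R^3)$ in a neighborhood of $\gamma$: the frame $n_1,n_2$ from \eqref{eq normal frame} is well-defined and smoothly dependent on $\eta'$ as long as $\|\eta'-\gamma'\|_{C^0}<R/2$, and $D\Phi_\eta$ depends continuously on $\eta''$ through $n_1',n_2'$. Consequently $D\Phi_\eta \to D\Phi_\gamma$ uniformly on $\overline{S^1\times D_r}$ as $\eta\to\gamma$ in $C^2$. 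Since $\Phi_\gamma$ is a diffeomorphism on $S^1\times D_r$, we have $|\det D\Phi_\gamma|\geq c_0>0$ on $S^1\times\overline{D_{r-\eps/2}}$, hence for all $\eta$ with $\|\eta-\gamma\|_{C^2}$ small enough, $D\Phi_\eta$ is invertible on the same set with $\|(D\Phi_\eta)^{-1}\|\leq M$, where $M$ depends only on $\gamma$.

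Next I would establish a uniform quantitative inverse function theorem: there is $r_0>0$, depending on $\|\gamma\|_{C^{2,\alpha}}$, $R$, and $M$, such that for every admissible $\eta$ and every $p\in S^1\times\overline{D_{r-\eps}}$, the map $\Phi_\eta$ is injective on the open ball of radius $r_0$ around $p$. The standard proof of the inverse function theorem gives injectivity on any ball on which the oscillation of $D\Phi_\eta$ is below $1/(2M)$, and the $C^{2,\alpha}$ bound on $\eta$ yields a uniform $C^{0,\alpha}$ estimate on $D\Phi_\eta$, since $n_1,n_2$ inherit their regularity from $\eta'$ (after noting that the denominator in \eqref{eq normal frame} stays bounded away from zero). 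This step crucially uses $C^{2,\alpha}$ rather than merely $C^2$.

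For global injectivity I would argue by contradiction. Suppose the statement fails for some $\eps>0$: then there exist $\eta_n\to\gamma$ in $C^2$ with $\|\eta_n\|_{C^{2,\alpha}}\leq \|\gamma\|_{C^{2,\alpha}}+1$ and distinct points $p_n\neq q_n$ in $S^1\times\overline{D_{r-\eps}}$ with $\Phi_{\eta_n}(p_n)=\Phi_{\eta_n}(q_n)$. By compactness, extract subsequences $p_n\to p$, $q_n\to q$. If $p\neq q$, uniform convergence $\Phi_{\eta_n}\to\Phi_\gamma$ gives $\Phi_\gamma(p)=\Phi_\gamma(q)$, contradicting the injectivity of $\Phi_\gamma$ on $S^1\times D_r\supset S^1\times\overline{D_{r-\eps}}$. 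If $p=q$, then eventually $|p_n-q_n|<r_0$, and the uniform injectivity radius forces $\Phi_{\eta_n}(p_n)\neq \Phi_{\eta_n}(q_n)$, a contradiction. Combined with the invertibility of $D\Phi_\eta$, this makes $\Phi_\eta$ a diffeomorphism from $S^1\times D_{r-\eps}$ onto its image.

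The main obstacle is the case $p=q$ in the contradiction argument. Under only a $C^2$ bound, the modulus of continuity of $D\Phi_{\eta_n}$ could degenerate along the sequence and the local injectivity radius $r_0$ could collapse to zero, so one could not rule out that $p_n,q_n$ coalesce faster than injectivity near their limit would permit. The $C^{2,\alpha}$ bound provides exactly the uniform modulus of continuity needed to close this gap and to make $r_0$ depend only on $\|\gamma\|_{C^{2,\alpha}}$ and $R$.
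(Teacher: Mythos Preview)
Your proposal is correct and hinges on the same essential observation as the paper: the $C^{2,\alpha}$ bound on $\eta$ gives a uniform $C^{0,\alpha}$ modulus of continuity for $D\Phi_\eta$, which is what prevents the local injectivity radius from collapsing along a sequence.

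The route, however, differs from the paper's. You argue by contradiction with sequences and compactness, separating the cases $p\neq q$ (handled by uniform convergence $\Phi_{\eta_n}\to\Phi_\gamma$) and $p=q$ (handled by the uniform injectivity radius $r_0$). The paper instead gives a direct quantitative argument: it introduces the bi-Lipschitz constant $\tilde c$ of $\Phi_\gamma$ via the function $(p,q)\mapsto |\Phi_\gamma(p)-\Phi_\gamma(q)|/|p-q|$ (extended continuously to the diagonal), so that $\Phi_\eta(p)=\Phi_\eta(q)$ forces $|p-q|<2\delta/\tilde c$ outright. Then the H\"older bound on $D\Phi_\eta$ yields $\tfrac12\min|D\Phi_\gamma v|\leq C\delta^\alpha$, which is false for small $\delta$. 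Your compactness argument is perhaps the more familiar packaging and avoids introducing $\tilde c$, while the paper's version is fully quantitative and makes the dependence of $\delta$ on $\gamma$, $R$, and $\alpha$ more transparent. Both are complete; neither is strictly more elementary.
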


\begin{proof}
It suffices to find $\delta>0$ such that $\Phi_\eta: S^1 \times D_{r-\eps}\to \R^3$ is injective and has an invertible derivative at every point if $\|\eta-\gamma\|_{C^2}<\delta$.

\com{Denote by $GL(3)$ the (dense and open) set of invertible $3\times 3$ matrices and by $\partial GL(3)$ the non-invertible matrices.} Note that \com{by \eqref{eq normal frame} and \eqref{eq coordinates curve}}, $\|\Phi_\eta - \Phi_\gamma\|_{C^1(S^1\times D_r;\R^3)} \leq (1 + C_Rr)\,\|\gamma - \eta\|_{C^2(S^1;\R^3)}$ for a constant $C_R$ depending only on $R>0$, so if we choose
\[
\delta < \frac {\min_{S^1\times \overline{D_{r-\eps}}} \dist\big(D\Phi_\gamma, \partial GL(3)\big)} {2 \,(1 + C_Rr)}
\]
then $D\Phi_\eta \in GL(3)$ and $\Phi_\eta$ is locally injective. In particular, 
\[
\min_{p, v} \frac{|D\Phi_\eta(p)\cdot v|}{|v|} \geq \frac12\min_{p,v} \frac{|D\Phi_\gamma(p)\cdot v|}{|v|}.
\]

Now, assume that $\Phi_\eta(p) = \Phi_\eta(q)$ for some $p, q\in S^1\times \overline{D_{r-\eps}}$. The function
\[
(p,q) \mapsto \begin{cases}\frac{|\Phi_\gamma(p)-\Phi_\gamma(q)|}{|p-q|} &p\neq q\\ \min_{v\in S^2} \big|D\Phi(q)\,v\big| &p=q\end{cases}
\]
has a positive minimum $\tilde c$, which implies that
\begin{align*}
\tilde c\,|p-q| &\leq |\Phi_\gamma(p) - \Phi_\gamma(q)|\\
	& \leq |\Phi_\gamma(p) - \Phi_\eta(p)| + |\Phi_\eta(p) - \Phi_\eta(q)| + |\Phi_\eta(q) - \Phi_\gamma(q)|\\
	&< \delta + 0 + \delta \\
	&= 2\delta.
\end{align*}
This means that the points $p, q$ need to be close -- however, if $|p-q|< \frac{2\,\delta}{\tilde c}$, then 
\begin{align*}
\frac12\min_{p,v} \frac{|D\Phi_\gamma(p)\cdot v|}{|v|} &\leq \min_{p, v} \frac{|D\Phi_\eta(p)\cdot v|}{|v|}\\
	&\leq \frac{|D\Phi_\eta(p)\cdot (p-q)|}{|p-q|}\\
	&= \frac{\big|\Phi_\eta(p) - \Phi_\eta(q) - D\Phi_\eta(p)\cdot(p-q)\big|}{|p-q|}\\
	&\leq \frac{[D^2\Phi_\eta]_{C^{0,\alpha}}\,|p-q|^{1+\alpha}}{|p-q|}\\
	&\leq (1+C_Rr)\,[\eta'']_{C^{0,\alpha}}\,\left(\frac{2\,\delta}{\tilde c}\right)^\alpha\\
	&\leq C(R, r, \gamma)\,\delta^\alpha.
\end{align*}
Since the constant on the left hand side is positive, we find that we can choose $\delta>0$ for which the inequality becomes false.
\end{proof}

Since $r, N$ and $\Phi_\gamma$ depend explicitly on $\gamma$, we can re-write all constants above as depending on $\gamma$ only.

\begin{corollary}
Let $\gamma\in C^{2,\alpha}(S^1;\R^3)$ be an embedded curve with embeddedness radius $r>0$. Then there exists $\eps>0$ such that all curves $\eta \in C^{2,\alpha}(S^1;\R^3)$ satisfying $\|\gamma-\eta\|_{C^2}< \eps$ and $\|\eta\|_{C^{2,\alpha}}\leq \|\gamma\|_{C^{2,\alpha}}+1$ have an embeddedness radius $> r/2$. 
\end{corollary}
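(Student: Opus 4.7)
The plan is to reduce the corollary to a direct application of the preceding theorem by choosing intermediate radii with appropriate slack. Since the embeddedness radius of $\gamma$ is the supremum of all radii for which $\Phi_\gamma$ is a diffeomorphism, for any $r' < r$ the map $\Phi_\gamma : S^1 \times D_{r'} \to \Phi_\gamma(S^1 \times D_{r'})$ is indeed a diffeomorphism. First I would fix an auxiliary radius such as $r' := 3r/4 \in (r/2, r)$, so that $\Phi_\gamma$ qualifies as a diffeomorphism on $S^1 \times D_{r'}$ in the sense required by the hypotheses of the theorem.

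Next I would feed $r'$ together with the slack parameter $\varepsilon_0 := r'/2 - r/4 = r/8$ into the theorem. By construction, $r' - \varepsilon_0 = 5r/8 > r/2$, so it suffices to choose the $\delta$ provided by the theorem for this pair $(r', \varepsilon_0)$. One then sets $\eps := \delta$ in the corollary. By the theorem, every curve $\eta \in C^{2,\alpha}(S^1;\R^3)$ with $\|\eta - \gamma\|_{C^2} < \eps$ and $\|\eta\|_{C^{2,\alpha}} \leq \|\gamma\|_{C^{2,\alpha}} + 1$ satisfies that $\Phi_\eta$ is a diffeomorphism on $S^1 \times D_{r' - \varepsilon_0} = S^1 \times D_{5r/8}$.

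By Definition \ref{definition embeddedness radius}, the embeddedness radius of $\eta$ is the supremum of radii for which $\Phi_\eta$ is such a diffeomorphism, hence at least $5r/8 > r/2$, which gives the strict inequality demanded by the statement. There is no real obstacle here beyond verifying that the auxiliary thresholds fit inside the interval $(r/2, r)$ and that the $C^{2,\alpha}$ hypothesis on $\eta$ is exactly the one required by the theorem; the constants in the theorem depend only on $\gamma$, $\alpha$, and $R$ (where $R$ is inherited from the construction of the normal frame for $\gamma$), so $\eps$ depends only on $\gamma$ and $\alpha$ as claimed.
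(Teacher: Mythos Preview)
Your proposal is correct and is precisely the intended deduction: the paper states the corollary immediately after the theorem without a separate proof, and your choice of an intermediate radius $r'\in(r/2,r)$ together with slack $\varepsilon_0$ so that $r'-\varepsilon_0>r/2$ is exactly how the theorem is meant to be applied. The only detail worth making explicit (which you touch on) is that the vector $N$ and radius $R$ used to build $\Phi_\eta$ are inherited from $\gamma$ and remain admissible for all $\eta$ that are $C^1$-close, which is covered by the preliminary discussion in the appendix.
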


\begin{remark}
Of course, all such curves are $C^{2,\beta}$-close to $\gamma$ for all $\beta<\alpha$ by interpolation.
\end{remark}

\begin{remark}
It is easy to see that a map with invertible derivative at a point is invertible in a neighborhood of that point -- however, there is no a priori estimate of the size of the neighborhood if the derivative is only assumed to be continuous since there is no a priori statement how well the derivative describes the function around the point. It should therefore not be surprising that a modulus of continuity estimate of some form -- realized above as a H\"older estimate -- is needed.
\end{remark}

\begin{remark}
If $n=2$ or $n=4$, there are easy proofs for the existence of an orthonormal basis along every arc-length parametrized $C^1$-curve that do not require any further assumptions like the H\"older-continuity of the derivative. For $n=2$, the normal field is given by $n(s) = (-\dot{\gamma}_2(s), \dot{\gamma}_1(s))$; for $n=4$ we note that $S^3$ is a Lie-group and as such there are three vector-fields $X_1, X_2, X_3$ forming an orthonormal basis of $T_pS^3$ at every $p\in S^3$. The ortho-normal basis is then given by
\[
\gamma'(s), \qquad n_1(s) = X_1(\gamma'(s)), \qquad n_2(s) = X_2(\gamma'(s)), \qquad n_3(s) = X_3(\gamma'(s)).
\]
\end{remark}

\begin{remark}
A notable difference between this construction and constructions like the Frenet-Serret frame is that the frame is defined globally, while the Frenet-Serret frame may develop discontinuities at points where curvature vanishes since $\frac{\vv H}{|\vv H|}$ may become discontinuous there. A second notable difference is that all estimates depend only on the normal fields depend only on the first derivative of $\gamma$, not the first two.
\end{remark}

\subsection{Cylindrical coordinates} \label{appendix : cylindrical coordinates}

For practical purposes, we choose a slightly different parametrization of the tubular neigbourhood $B_{\delta}(\gamma)$ which resembles cylindrical coordinates. We set
\[
\Psi :S^1 \times [0,\delta) \times S^1  \to \R^3, \qquad \Psi(s, r, \theta) = \gamma(s) + r\,\left(\cos\theta\,n_1(s) + \sin\theta\,n_2(s)\right)
\]
where we identify $S^1$ with the periodic interval $[0,2\pi)$. Observe that $n_1\wedge n_2 = \gamma'$ pointwise, so
\begin{align}
\partial_s\Psi &= \gamma' + r\left(\cos\theta\,n_1' + \sin\theta\,n_2'\right),\nonumber\\
\partial_r\Psi &= \cos\theta\,n_1 + \sin\theta\,n_2,\nonumber\\
\partial_\theta\Psi &= r\left(-\sin\theta\,n_1 + \cos\theta\,n_2\right),\nonumber\\
\partial_r\Psi\wedge \partial_\theta\Psi &= \big(\cos\theta\,n_1 + \sin\theta\,n_2\big)\wedge r\left(-\sin\theta\,n_1 + \cos\theta\,n_2\right)\nonumber\\
	&= r\,\left(\cos^2\theta \,n_1\wedge n_2 - \sin^2\theta \,n_2\wedge n_1\right) \nonumber\\
	&=  r\,\gamma', \nonumber \\
\big|\det\,D\Psi\big| &= \big| \langle \partial_s\Psi, \partial_r\Psi\wedge \partial_\theta\Psi\rangle\big| \nonumber\\
	&= \big|\langle \gamma' + r\left(\cos\theta\,n_1' + \sin\theta\,n_2'\right), r\,\gamma'\rangle \big| \nonumber\\
	&= r\big| 1 +r \langle \cos\theta\,n_1' +\sin \theta\,n_2', \gamma'\rangle\big|\nonumber\\
	&= r\big| 1 - r\langle \cos\theta \,n_1 + \sin\theta\,n_2, \gamma''\rangle\big| \nonumber\\
	&= r\big| 1- r\langle \vv H, \nu\rangle\big| \nonumber\\
	&=  r\big( 1- r\langle \vv H, \nu\rangle\big), \label{eq: expansiondetpsi}
\end{align}
where $\vv H$ is the mean curvature vector of $\gamma$ at $s$ and 
\[
\nu = \nu(s,\theta) = \cos\theta\,n_1(s) + \sin\theta\,n_2(s)
\]
is the exterior normal vector to $B_r(\gamma)$ at $\Psi(s,r,\theta)$. In the derivation, we used that
\[
\langle \gamma', n_1\rangle \equiv 0\quad\Ra\quad 0 = \frac{d}{ds} \langle \gamma', n_1\rangle = \langle\gamma'', n_1\rangle + \langle \gamma', n_1'\rangle.
\]
The same argument shows that $\langle n_1,  n_1'\rangle =0$ and $\langle n_1, n_2'\rangle = - \langle n_1', n_2\rangle$.
Now, let us investigate the cylindrical hull $\partial B_r(\gamma)$ which is parametrized by 
\[
\psi_r:S^1\times S^1\to \R^3, \qquad \psi(s, \theta) = \gamma(s) + r\left(\cos\theta\,n_1(s) + \sin\theta\,n_2(s)\right).
\]
The volume element $\sqrt{\det g_{\psi_r}} = \sqrt{\det \big(D\psi_r^T\,D\psi_r\big)}$ can be obtained as follows from the area formula and co-area formula. Let $w\in C_c^\infty(B_{\overline r}(\gamma))$ be a function where $\overline r$ is the embeddedness radius. Then, since $|\nabla \dist(\cdot,\gamma)|\equiv 1$, we find that
\begin{align*}
\int_{B_{\overline r}(\gamma)} w(x)\dx &= \int_0^{\overline r}\int_{S_L^1} \int_{S^1} w(\Psi(r,s,\theta))\,r\big( 1- r\langle \vv H, \nu\rangle\big)\d\theta\ds\dr  \\
\int_{B_{\overline r}(\gamma)} w(x) \dx &= \int_{B_{\overline r}(\gamma)} w(x) \big|\nabla \dist(x,\gamma)\big| \dx \\
	&= \int_0^{\overline r} \int_{\{\dist(x,\gamma) = r\}}w \d\H^{n-1} \dr\\
	&= \int_0^{\overline r} \left(\int_{S^1}\int_{S^1} w(\psi_r(s,\theta))\,\sqrt{\det (g_{\psi_r})}\d\theta\ds\right)\dr\\
	&=  \int_0^{\overline r}\int_{S_L^1} \int_{S^1} w(\Psi(r,s,\theta))\,\sqrt{\det (g_{\psi_r}(s,\theta))}\d\theta\ds\dr,
\end{align*}
so the fundamental lemma of the calculus of variations shows that
\begin{equation}\label{eq: areadistortion}
\sqrt{\det (g_{\psi_r}(s,\theta))} = r\big( 1- r\langle \vv H, \nu\rangle\big).
\end{equation}
almost everywhere and smoothness implies that the identity holds in the strong sense.
We used fairly involved machinery in a simple situation, and
the same result can be obtained by a direct, albeit tedious calculation. In the metric tensor on $\partial B_\eps(\gamma)$, scalar products of the vectors $n_i'$ occur, which do not cancel out in an obvious way in $\det(g)$. It is necessary to use the identities 
$n_2 = \pm \gamma' \wedge n_1$ multiple times to translate scalar products back to quantities depending only on $\gamma$, together with close attention to which vectors are orthogonal or parallel.

\section{On the extension and trace constants of $\Omega_\eps$}
\label{appendix domain with hole}

In this section, we prove \com{uniform extension and trace results and uniform Poincar\'e inequalities} for domains $\Omega_\eps$ given by a fixed domain $\Omega$ from which we removed the $\eps$-tube \com{$B_\eps(\gamma)$} around an embedded $C^2$-curve $\gamma$. We make frequent use of the adapted cylindrical coordinates developed in the previous section.

We begin by proving the uniform trace theorem.

\begin{proof}[Proof of Proposition \ref{proposition uniform trace}]
Let $\Psi$ be the adapted cylindrical coordinates as introduced in Appendix \ref{appendix embeddedness}.
By \eqref{eq: areadistortion} we may choose $R>0$ smaller than the embeddedness radius such that $\det(D\Psi) = r\big( 1- r\langle \vv H, \nu\rangle\big) \geq \frac r2$ for $r<R$ (\com{Note that} both the $C^2$-norm and the embeddedness radius of $\gamma$ enter in the choice of $R$). 
Next, let $\eta\in C^\infty(\R)$ be a cut-off function such that
\[
\eta(z) = \begin{cases}1 &|z|\leq \frac{R}3,\\ 0 & |z|\geq \frac{2R}3,\end{cases} \text{ and } -\frac{4}R\leq \eta'\leq 0,
\]
and \text{set} $\tilde\eta:\Omega \to \R$, $\tilde\eta(x) := \eta(\dist(x,\gamma))$. We observe that $\tilde\eta u = u$ on $\partial B_\eps(\gamma)$ \com{for $\eps< \frac{R}3$} and that $u\tilde\eta\in H^1_0(\Omega)$ when \com{$R< \dist(\gamma,\partial\Omega)$}. We observe that
\begin{align*}
|u|\big(\psi_\eps(s,\theta)\big) &= |u\tilde\eta|\big(\Psi(s,\eps,\theta)\big)\\
	&= |u\tilde\eta|\big(\Psi(s,R,\theta)\big) - \int_\eps^R\frac{d}{dr} \, |u\tilde\eta|\big(\Psi(s,r,\theta)\big) \dr\\
	&= 0 - \int_\eps^R\left[|u| \big(\Psi(s,r,\theta)\big)\,\eta'(r) + \eta(r)\, \left\langle (Du)^T\frac{u}{|u|}\big(\Psi(s,r,\theta)\big), \frac\partial{\partial r}\Psi (s,r,\theta)\right\rangle\right]\dr\\
	&\leq \frac4R\int_{R/3}^{2R/3} |u| \big(\Psi(s,\com{r},\theta)\big)\dr + \int_\eps^R\big|Du\big|\big(\Psi(s,r,\theta)\big)\dr,
\end{align*}
since
\begin{equation}\label{eq radial derivative}
\left|\frac\partial{\partial r} \Psi(s,r,\theta)\right| = \big| \cos\theta\,n_1(s) + \sin\theta\,n_2(s)\big| = 1.
\end{equation}
\com{Recall that $S_L^1$ denotes the circle of length $L$.} Therefore, using H\"older's inequality we find that
\begin{align*}
\int_{\partial B_\eps(\gamma)}&|u|^2\d\H^2 = \int_{S^1} \int_{S^1}|u|^2\big(\psi_\eps(s,\theta)\big) \sqrt{\det g_{\psi_\eps}(s,\theta)}\d\theta\ds\\
	&\leq \int_{S^1}\int_{S^1} \left(\frac4R\int_{R/3}^{2R/3} |u| \big(\Psi(s,\com{r},\theta)\big)\dr + \int_\eps^R\big|Du\big|\big(\Psi(s,r,\theta)\big)\dr\right)^2\left[\eps + O_\gamma(\eps^2)\right]\d\theta\ds\\
	&\leq 2\int_{S^1}\int_{S^1} \left[\left(\frac4R\int_{R/3}^{2R/3} |u| \big(\Psi(s,\com{r},\theta)\big)\dr\right)^2 + \left(\int_\eps^R\big|Du\big|\big(\Psi(s,r,\theta)\big)\dr\right)^2\right]2\eps\d\theta\ds\\
	&\leq 2\int_{S_L^1}\int_{S^1}\left[\frac{4}{3}\int_{R/3}^{2R/3} |u|^2 \big(\Psi(s,\com{r},\theta)\big)\dr + (R-\eps)\int_\eps^R|Du|^2\left(\Psi(s,r,\theta)\right)\dr\right]2\eps \d\com{\theta}\\
	&\leq 2\int_{S_L^1}\int_{S^1}\int_\eps^R \left(\left[\frac4{3} |u|^2 + R\,|Du|^2\right]\circ\Psi\right)\: \frac{2\eps}{|\det(D\Psi)|}\,|\det(D\Psi)|\dr\d\theta\ds\\
	&\leq 2\int_{S_L^1}\int_{S^1}\int_\eps^R \left(\left[\frac4{3} |u|^2 + R\,|Du|^2\right]\circ\Psi\right)\: \frac{2\eps}{\frac r2}\,|\det(D\Psi)|\dr\d\theta\ds\\
	&\leq 8 \int_{S_L^1}\int_{S^1}\int_\eps^R \left(\left[\frac4{3} |u|^2 + R\,|Du|^2\right]\circ\Psi\right)\:|\det(D\Psi)|\dr\d\theta\ds\\
	&\leq 8 \max\left\{\frac43, R\right\}\,\int_{B_R(\gamma)\setminus B_\eps(\gamma)} \left[|u|^2+|Du|^2\right]\dx.
\end{align*}
\end{proof}

Let us move on to the uniform extension theorem.

\begin{proof}[Proof of Proposition \ref{proposition uniform extension}]
Let $\eta\in C^\infty(\R)$ be a cut-off function such that
\[
\eta(z) = \begin{cases}1 &|z|\leq \frac{R}2\\ 0 & |z|\geq R\end{cases}, \qquad -\frac{3}R\leq \eta'\leq 0
\]
and by abuse of notation write $\eta:\Omega \to \R$, $\eta(x) = \eta(\dist(x,\gamma))$. We define
\[
(T_\eps u)\big(\Psi(s, r,\theta)\big) = (\eta u)\left(\Psi\left(s, \frac{\eps^2}{r}, \theta\right)\right)  
\]
for $r<\eps$. By construction, $T_\eps u$ is continuous at $r=\eps$ if $u$ is continuous up to the boundary and $T_\eps u(x) = 0$ if $\dist(x,\gamma) < \frac {\eps^2}{R}$ since then $\eta\left(\frac{\eps^2}{\dist(x,\gamma)}\right) =0$. These matching conditions ensure weak differentiability of $T_\eps u$ in the entire domain $\Omega$. 

It remains to prove the uniform estimates. In standard Cartesian coordinates, we have
\[
(T_\eps u)(x) = \begin{cases} (u\eta)\big(\phi_\eps(x)\big) &\dist(x,\gamma)<\eps\\ u(x) &\dist(x,\gamma)\geq \eps\end{cases}
\]
where $\phi_\eps$ is a sequence of (local) diffeomorphisms given by
\[
\phi_\eps = \Psi \circ I_\eps \circ \Psi^{-1}, \qquad I_\eps(r,s,\theta) = \left(\frac{\eps^2}r, s, \theta\right),
\]
and $\Psi$ is again the change of coordinates constructed in Appendix \ref{appendix embeddedness}.
Consequently,
\[
\nabla T_\eps u = \eta' \,\big(\nabla \dist(\cdot,\gamma)\big) \otimes u + \eta\,Du \cdot D\phi_\eps.
\]
Without loss of generality, let us assume that $\gamma' = e_1$, $n_1 = e_2$ and $n_2 = e_3$. We see that (c.f.~Appendix \ref{appendix embeddedness})S
\[
D\Psi(s,r,\theta) = \begin{pmatrix} 1- r\langle \vv H, \nu\rangle & 0 &0\\ r\beta\,\cos\theta & \cos\theta & -r\sin\theta\\ -r\beta\,\sin\theta &\sin\theta &r\cos\theta\end{pmatrix}
\]
using as before that $\langle n_1', \gamma'\rangle = \langle n_1, -\gamma''\rangle$ and denoting $\beta:= \langle n_2', n_1\rangle$ a sort of ``torsion'' associated to the moving frame $\gamma', n_1, n_2$. The inverse of the matrix is given by
\[
\big(D\Psi\big)^{-1}_{(s,r,\theta)} = 
\begin{pmatrix} \frac{1}{1-r\langle\nu,\vv H\rangle}&0 &0\\
\frac{r\beta\,\left(\cos^2\theta - \sin^2\theta\right)}{1-r\langle\nu,\vv H\rangle} & \cos\theta &\sin\theta\\
\frac{2\beta\,\sin\theta\cos\theta}{1-r\langle\nu,\vv H\rangle} &-\frac{\sin\theta }r&\frac{\cos\theta}r
\end{pmatrix}
\]
and thus
\begin{align*}
(D\phi_\eps)_{\Psi(s,r,\theta)} &= D\Psi_{I_\eps(s,r,\theta)} \cdot (D I_\eps)_{s,r,\theta} \cdot D(\Psi^{-1})_{\Psi(s,r,\theta)}\\
	&= D\Psi_{(s,\frac{\eps^2}r,\theta)} \cdot (D I_\eps)_{(s,r,\theta)} \cdot (D\Psi)^{-1}_{(s,r,\theta)}.\\
	&= \begin{pmatrix} 1- \frac{\eps^2}r\langle \vv H, \nu\rangle & 0 &0\\ \frac{\eps^2}r\beta\,\cos\theta & \cos\theta & -\frac{\eps^2}r\sin\theta\\ -\frac{\eps^2}r\beta\,\sin\theta &\sin\theta &\frac{\eps^2}r\cos\theta\end{pmatrix}\cdot \begin{pmatrix}1&0&0\\ 0&-\frac{\eps^2}{r^2} &0 \\ 0&0&1\end{pmatrix}\cdot
		\begin{pmatrix} \frac{1}{1-r\langle\nu,\vv H\rangle}&0 &0\\
		\frac{r\beta\,\left(\cos^2\theta - \sin^2\theta\right)}{1-r\langle\nu,\vv H\rangle} & \cos\theta &\sin\theta\\
		\frac{2\beta\,\sin\theta\cos\theta}{1-r\langle\nu,\vv H\rangle} &-\frac{\sin\theta }r&\frac{\cos\theta}r\end{pmatrix}\\
	&=\begin{pmatrix} 1- \frac{\eps^2}r\langle \vv H, \nu\rangle & 0 &0\\ \frac{\eps^2}r\beta\,\cos\theta &\frac{-\eps^2}{r^2} \cos\theta & -\frac{\eps^2}r\sin\theta\\ -\frac{\eps^2}r\beta\,\sin\theta &-\frac{\eps^2}{r^2}\sin\theta &\frac{\eps^2}r\cos\theta\end{pmatrix}\cdot
		\begin{pmatrix} \frac{1}{1-r\langle\nu,\vv H\rangle}&0 &0\\
		\frac{r\beta\,\left(\cos^2\theta - \sin^2\theta\right)}{1-r\langle\nu,\vv H\rangle} & \cos\theta &\sin\theta\\
		\frac{2\beta\,\sin\theta\cos\theta}{1-r\langle\nu,\vv H\rangle} &-\frac{\sin\theta }r&\frac{\cos\theta}r\end{pmatrix}\\
	&= \begin{pmatrix}
		\frac{1- \frac{\eps^2}r\langle \vv H, \nu\rangle}{1-r\langle\nu,\vv H\rangle}&0 &0\\
		0 & \frac{\eps^2}{r^2}(\sin^2\theta - \cos^2\theta) & -\frac{2\eps^2}{r^2}\sin\theta\cos\theta\\
		0 & -\frac{2\eps^2}{r^2}\sin\theta\cos\theta & \frac{\eps^2}{r^2}(\cos^2\theta-\sin^2\theta)
		\end{pmatrix}.
\end{align*}
It follows that $||D\phi_\eps||_{L^\infty} \leq \frac{C\,\eps^2}{r^2}$ and thus
\begin{align*}
\int_{B_\eps(\gamma)} &|\nabla T_\eps u|^2\dx = \int_{S_L^1}\int_\frac{\eps^2}R^\eps \int_{S^1} |\nabla T_\eps u|^2(\Psi(s,r,\theta)) \,r\,\big(1-r\langle \vv H,\nu\rangle\big)\d\theta\dr\ds \\
	&\leq 2\int_{S_L^1}\int_\frac{\eps^2}R^\eps \int_{S^1} \left[\frac{9}{R^2}\,|u|^2 + C\left(\frac{\eps^2}{r^2}\right)^2\,|\nabla u|^2\right]\left(\Psi\left(s,\frac{\eps^2}r,\theta\right)\right)\,r\,\big(1-r\langle \vv H,\nu\rangle\big)\d\theta\dr\ds\\
	&\leq C\int_{S_L^1}\int_\frac{\eps^2}R^\eps \int_{S^1}\left[|u|^2 + |\nabla u|^2\right]\left(\Psi\left(s,\frac{\eps^2}r,\theta\right)\right) \cdot \frac{\eps^2}{r}\d\theta \frac{\eps^2}{r^2}\dr\ds\\
	&=  C\int_{S_L^1}\int_\eps^R \int_{S^1}\left[|u|^2 + |\nabla u|^2\right]\left(\Psi\left(s,r',\theta\right)\right) \cdot (r')\d\theta\dr'\ds\\
	&\leq C\int_{B_R(\gamma)\setminus B_\eps(\gamma)} |u|^2 + |\nabla u|^2\dx.
\end{align*}
A simpler version of the same argument applies to the $L^2$-norm.
\end{proof}

Finally, we uniformly bound the Poincar\'e constants of the sets $\Omega_\eps$.

\begin{proof}[Proof of Proposition \ref{proposition uniform poincare}]
Assume that $u\in H^1(\Omega)$ and, without loss of generality, that $\int_{\Omega_\eps} u=0$. Use the extension operator $T_\eps$ from above and observe that 
\begin{align*}
\int_\Omega T_\eps u\dx 
	&= \int_{\Omega_\eps}u_\eps\dx + \int_{B_\eps(\gamma)}T_\eps u\dx
	= \int_{B_\eps(\gamma)} T_\eps u\dx
\end{align*}
where
\begin{align*}
\left|\int_{B_\eps(\gamma)} T_\eps u\dx\right| &\leq \left(\int_{B_\eps(\gamma)}1\dx\right)^\frac12 \left(\int_{B_\eps(\gamma)}|T_\eps u|^2\dx\right)^\frac12
	\leq C_\gamma\,\eps^{1/2} \left(\int_\Omega u^2\dx\right)^\frac12.
\end{align*}
It follows that
\begin{align*}
\int_{\Omega_\eps}|u|^2\dx &\leq \int_\Omega |T_\eps u|^2\dx\\
	&\leq 2\int_\Omega \left|T_\eps u - \frac1{|\Omega|}\int_\Omega T_\eps u\right|^2 + \left|\frac1{|\Omega|}\int_\Omega T_\eps u \right|^2\dx\\
	&\leq 2\int_\Omega C_P\, |\nabla\, T_\eps u|^2 + \frac1{|\Omega|^2}\,C_\gamma^2\eps \left(\int_\Omega |u|^2\dx\right)\\
	&\leq 2C\int_{\Omega_\eps} |\nabla u|^2\dx + \frac{C_\gamma^2\,\eps}{|\Omega|}\int_\Omega |u|^2\dx
\end{align*}
which means that
\[
\int_{\Omega_\eps}|u|^2\dx\leq \frac{2C}{1- \frac{C_\gamma^2\eps}{|\Omega|}}\:\int_{\Omega_\eps} |\nabla u|^2\dx.
\]
For small $\eps$, the denominator is approximately $1$ and the constant asymptotically depends on the Poincar\'e constant of $\Omega$ as well as the extension constant, but nothing else.
\end{proof}

\section{$H^1$- and $L^2$-gradient flows}\label{appendix l2 h1}

We used an $H^1$-type dissipation for physical convenience where the dependence on the gradient of the velocity was weighted by a small parameter $\delta$. As a proof of concept, we show that in a simple and linear situation, it is possible to pass to the limit $\delta\to 0$ in the evolution equation and present first properties of solutions to such equations. 

Let $F:\overline\Omega\times\R\times \R^n\times \R^{n\times n} \to \R$ be any Lipschitz-continuous function. Then the maps
\[
A_\delta: C^{2,\alpha}(\overline\Omega) \to C^{2,\alpha}(\overline \Omega), \qquad A_\delta u = (1-\delta \Delta)^{-1}F(x,u, Du, D^2u)
\]
are Lipschitz-continuous for all $\delta>0$ with Lipschitz-constants scaling as $\frac1\delta$ and
\[
A_0: C^{2,\alpha}(\overline\Omega) \to C^{0,\alpha}(\overline \Omega), \qquad A_0 u = F(x,u, Du, D^2u)
\]
is Lipschitz-continuous. For $\delta>0$, we can easily prove existence for the evolution equation
\[
\begin{pde}
u_t &= A_\delta u &t>0\\
u &= u_0 &t=0
\end{pde}
\]
by the Picard-Lindel\"off theorem, even if the operator $A_0$ induced by $F$ fails to be elliptic. If $F$ is elliptic (and well-enough behaved), in the case $\delta=0$ existence follows from the existence theory of parabolic equations. We are interested in the question how well gradient flows of suitable energy functional $\E$ with respect to the inner product
\[
\langle u, v\rangle_\delta = \langle u,v\rangle_{L^2} + \delta \,\langle \nabla u, \nabla v\rangle_{L^2}
\]
(so Banach-space valued ODEs) approximate solutions to the (parabolic) $L^2$-gradient flow as $\delta\to0$. We briefly discuss the $H^1$-gradient flow of the Dirichlet energy for some $\delta>0$ as a model problem, i.e.\ the model equation
\[
(1-\delta\Delta)\,u_t = \Delta u.
\]
On the circle, this problem admits a direct treatment using Fourier series:
\begin{align*}
u(t,x) &= \sum_{n=0}^\infty a_n(t)\,\sin(nx) + b_n(t)\,\cos(nx)\\
\Delta u(t,x) &= -\sum_{n=0}^\infty n^2a_n(t)\,\sin(nx) + n^2b_n(t)\,\cos(nx)\\
u_t(t,x) &= \sum_{n=0}^\infty \dot a_n(t)\,\sin(nx) + \dot b_n(t)\,\cos(nx)\\
(1-\delta\Delta)u_t(t,x) &= \sum_{n=0}^\infty (1+\delta n^2)\dot a_n(t)\,\sin(nx) + (1+\delta n^2)\dot b_n(t)\,\cos(nx)
\end{align*}
which results in the ODEs for the coefficients
\[
(1+\delta n^2)\dot a_n = - n^2\,a_n\quad \Ra\quad a_n(t) = a_n(0)\,\exp\left(\frac{-n^2}{1+\delta n^2}\,t\right)
\]
and the same for $b_n$. While the exponential decay of coefficients in time resembles the behavior of solutions to the heat equation, there is a significant difference: the rate of exponential decay of the higher order coefficients is `capped' at $\frac1\delta$ since
\[
\frac{n^2}{1+\delta n^2} = \frac{1}{\delta} \frac{\delta n^2}{1+\delta n^2} \leq \frac1\delta.
\]
In particular, the rate of the exponential decay approaches $\frac1\delta$ for large $n$. While this `smoothes out' functions over time by decreasing oscillations in a non-rigorous sense, we do not observe any increase in regularity since 
\[
a_n(t) \approx a_n(0) \cdot \exp\left(\delta^{-1}t\right)
\] 
for large enough $n$, so the same summability properties hold at times $t>0$ and $t=0$, while for the heat equation with $\delta=0$ instantaneously $n^k\,a_n(t)$ becomes square-summable for all $k\in\N$, leading to $H^k$- and thus $C^\infty$-regularity. Here, an increase in regularity is not expected since the equivalent formulation of the equation as
\[
u_t = (1-\delta\Delta)^{-1}\Delta u = \Delta\circ (1-\delta\Delta)^{-1}u
\]
does not require any particular degree of spacial regularity -- in fact, the equation in this form makes sense in $L^p$- or H\"older spaces and does not require functions to have derivatives even in the weak sense.
However, we observe that, if the initial condition $u(0)$ lies in $H^k$ for $k\geq 0$, ($H^0 = L^2$), we find
\begin{align*}
\|u^\delta(t) - u^0(t)\|_{H^k} &= \sum_{n=0}^\infty \left[n^k\,\big(a_n^\delta(t)- a_n^0(t)\big)\right]^2 + \left[n^k\,\big(b_n^\delta(t)- b_n^0(t)\big)\right]^2 \\
	&= \sum_{n=0}^\infty n^{2k} \left(a_n(0)^2 + b_n(0)^2\right)\cdot \left[\exp\left(\frac{-n^2}{1+\delta n^2}\,t\right) - \exp\big(-n^2t\big)\right]^2\\
	&\to 0
\end{align*}
due to the dominated convergence theorem. Thus we note:
\begin{enumerate}
\item Solutions to the $L^2+ \delta H^1_0$-gradient flow for positive $\delta$ are not expected to increase regularity of solutions, but
\item as $\delta\to 0$, solutions $u^\delta$ approach the solution $u^0$ of the heat equation in every Hilbert Sobolev space which contains the initial condition. 
\end{enumerate}

The H\"older theory for this problem seems more complicated since the operator
\[
(1-\delta\Delta)^{-1}: C^{0,\alpha}\to C^{0,\alpha}
\]
does not approach the identity map even pointwise in the natural topology. Since $u_\delta \in C^{2,\alpha}$ by regularity arguments, $u_\delta$ lies in the small H\"older space $c^{0,\alpha}$ (the closure of $C^\infty$ in the $C^{0,\alpha}$-norm) and thus there exist right hand sides $f\in C^{0,\alpha}$ which mandate that $\|u_\delta - f\|_{C^{0,\alpha}}\geq 1$ for all $\delta$. 

We provide a statement which is needed above on the behavior of solutions in H\"older spaces. To keep things simple, we only treat the periodic case (which is all we need on the circle) and do not consider boundary effects. We believe these results to be well known but have been unable to find a reference.

\begin{theorem}\label{theorem regularizing identity}
Let $L$ be a uniformly elliptic operator with $C^{0,\alpha}$-H\"older continuous coefficients $Lu:= -a^{ij}D_iD_ju + b^iu$ on a flat $d$-dimensional torus $\mathbb T^d$ and $f\in C^{0,\alpha}(\mathbb T^d)$. Denote by $u_\delta$ the unique solution of
\[
(\delta L +1)\,u_\delta = f.
\]
Then for all $\delta<1$ we have
\[
 \|u_\delta\|_{L^\infty} \leq \|f\|_{L^\infty},\qquad  [u_\delta]_{C^{0,\alpha}} \leq [f]_{C^{0,\alpha}}
 \]
and
\[
\|u_\delta - f\|_{L^\infty} \leq C_1\,[f]_{C^{0,\alpha}}\,\delta^{\alpha/2}, \qquad \|u_\delta\|_{C^{2,\alpha}} \leq \frac{C_2}\delta\,\|f\|_{C^{0,\alpha}}.
\]
The constant $C_1$ depends only on the $L^\infty$-norm of the coefficients of $L$ while $C_2$ depends on the H\"older norm of the coefficients of $L$ and its uniform ellipticity constant. 
\end{theorem}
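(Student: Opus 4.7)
The four estimates are proved in succession, each leveraging the previous ones. The equation $(\delta L + 1)u_\delta = f$ on the torus is exploited via maximum-principle arguments for the first two bounds, a mollification bootstrap for the $L^\infty$ convergence rate, and interior Schauder theory for the top-order bound.

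\textbf{The two contraction estimates.} The bound $\|u_\delta\|_{L^\infty} \leq \|f\|_{L^\infty}$ is immediate from the elliptic maximum principle applied at the extrema of $u_\delta$: at an interior maximum $Du_\delta$ vanishes and $D^2 u_\delta \leq 0$, hence $\delta L u_\delta \geq 0$, forcing $u_\delta \leq f$ there (the analogous lower bound at a minimum). For the seminorm contraction, the cleanest approach in the constant-coefficient setting is to note that $w_h(x) := u_\delta(x+h) - u_\delta(x)$ satisfies $(\delta L + 1) w_h = f(\cdot + h) - f(\cdot)$, so by the $L^\infty$ contraction $|w_h(x)| \leq [f]_{C^{0,\alpha}}|h|^\alpha$. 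For variable coefficients the shifted equation acquires a commutator term $\delta[a^{ij}(\cdot+h) - a^{ij}(\cdot)] D_i D_j u_\delta(\cdot + h)$ which is not obviously small; obtaining the constant $1$ rather than a larger one requires doubling the variables and running an Ishii--Lions argument on $\Phi(x,y) := u_\delta(x) - u_\delta(y) - [f]_{C^{0,\alpha}}|x-y|^\alpha$, testing the second-order optimality conditions at a hypothetical positive maximum against uniform ellipticity. This is the technical heart of the theorem.

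\textbf{The convergence rate $\delta^{\alpha/2}$.} I would use a mollification bootstrap. Let $f_\varepsilon = f \ast \rho_\varepsilon$, which on the torus satisfies the standard bounds $\|f - f_\varepsilon\|_{L^\infty} \leq C[f]_{C^{0,\alpha}} \varepsilon^\alpha$ and $\|D^2 f_\varepsilon\|_{L^\infty} \leq C[f]_{C^{0,\alpha}} \varepsilon^{\alpha - 2}$, the latter obtained by using that $\int D^2 \rho_\varepsilon = 0$ to rewrite $D^2 f_\varepsilon(x) = \int D^2 \rho_\varepsilon(x-y)(f(y) - f(x))\,dy$. Let $u_\delta^\varepsilon$ solve $(\delta L + 1) u_\delta^\varepsilon = f_\varepsilon$. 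The $L^\infty$ contraction already proved gives $\|u_\delta - u_\delta^\varepsilon\|_{L^\infty} \leq C[f]_{C^{0,\alpha}} \varepsilon^\alpha$. Moreover, $u_\delta^\varepsilon - f_\varepsilon$ satisfies $(\delta L + 1)(u_\delta^\varepsilon - f_\varepsilon) = -\delta L f_\varepsilon$, whose right-hand side is now bounded, so the same contraction yields $\|u_\delta^\varepsilon - f_\varepsilon\|_{L^\infty} \leq \delta \|L f_\varepsilon\|_{L^\infty} \leq C\delta[f]_{C^{0,\alpha}}\varepsilon^{\alpha-2}$. Summing the three pieces via the triangle inequality and optimizing at $\varepsilon = \sqrt{\delta}$ produces the desired $C_1 [f]_{C^{0,\alpha}} \delta^{\alpha/2}$, with $C_1$ depending only on $\|a^{ij}\|_{L^\infty}$ (and the lower-order coefficients) as the coefficients entered only through $\|L f_\varepsilon\|_{L^\infty}$.

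\textbf{The $C^{2,\alpha}$ bound.} This is a Schauder bootstrap. Rewriting the equation as $L u_\delta = (f - u_\delta)/\delta$ and using the contractions already established to control the right-hand side, we obtain $\|L u_\delta\|_{C^{0,\alpha}} \leq 2 \|f\|_{C^{0,\alpha}}/\delta$. Standard interior Schauder estimates on the torus (where every point is interior) then give $\|u_\delta\|_{C^{2,\alpha}} \leq C(\|L u_\delta\|_{C^{0,\alpha}} + \|u_\delta\|_{L^\infty}) \leq (C_2/\delta) \|f\|_{C^{0,\alpha}}$ for $\delta < 1$, with $C_2$ depending on the Hölder norm of the coefficients and the ellipticity constant as stated. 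The main obstacle throughout, as noted, is obtaining the sharp constant $1$ in the $C^{0,\alpha}$ seminorm estimate for variable coefficients; the other three bounds are essentially routine consequences of the maximum principle, mollification, and Schauder theory.
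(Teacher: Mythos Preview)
Your approach is essentially the same as the paper's on the $L^\infty$ contraction, the $C^{2,\alpha}$ Schauder bootstrap, and the mollification argument for the $\delta^{\alpha/2}$ rate, and it is correct. Two differences are worth noting. First, for the rate the paper does not introduce an auxiliary solution $u_\delta^\varepsilon$; it works directly with $f_\rho \pm \bar c\,\rho^\beta$ as explicit sub- and super-solutions of $(\delta L+1)u=f$, then optimises $\rho=\sqrt\delta$. Your three-term splitting via $u_\delta^\varepsilon$ is equivalent in content and arguably cleaner, since the comparison reduces to two applications of the $L^\infty$ contraction you already have.

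Second, on the $C^{0,\alpha}$ seminorm the paper simply translates, asserting that $u_{\delta,z}:=u_\delta(\cdot+z)$ solves $(\delta L+1)u_{\delta,z}=f(\cdot+z)$ and then applying the $L^\infty$ contraction to $u_\delta-u_{\delta,z}$. You are right that this equality of equations holds verbatim only for translation-invariant $L$; for variable $a^{ij},b^i$ a commutator $\delta\big(a^{ij}(\cdot)-a^{ij}(\cdot+z)\big)D_iD_ju_\delta(\cdot+z)+\ldots$ appears. Your proposed Ishii--Lions doubling on $\Phi(x,y)=u_\delta(x)-u_\delta(y)-[f]_{C^{0,\alpha}}|x-y|^\alpha$ is an appropriate way to recover the sharp constant $1$ in the variable-coefficient case; the paper's quicker route trades this subtlety for brevity. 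In the paper's applications only a uniform (not sharp) constant is actually needed, so either approach suffices there, but your observation is a genuine refinement of the written argument.
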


\begin{proof}
{\bf First $L^\infty$-estimate.} Take a point $x$ where $u_\delta$ achieves its maximum. Then $Lu_\delta(x) \geq 0$ and thus
\[
u_\delta(x) = f(x) - \delta Lu_\delta(x) \leq f(x) \leq \|f\|_{L^\infty}.
\]
The same can be done at a minimum.

{\bf $C^{0,\alpha}$-estimate.} Consider $u_{\delta, z} = u_\delta(x+z)$ which solves $(\delta L + 1) u_{\delta,z} = f_z = f(\cdot +z)$. Using the $L^\infty$-estimate, we find that
\[
(\delta L +1) (u_\delta - u_{\delta,z}) = (f - f_z) \qquad\Ra\qquad \|u_\delta - u_{\delta,z}\|_{L^\infty} \leq \|f-f_z\|_{L^\infty} \leq [f]_{C^{0,\alpha}} |z|^\alpha
\]
which implies that 
\[
|u_\delta (x) - u_\delta(x+z)| \leq [f]_{C^{0,\alpha}}\,|z|^\alpha
\]
which is a re-arrangement of the usual H\"older estimate.

{\bf $C^{2,\alpha}$-estimate.} The $C^{2,\alpha}$-estimate is now easy to obtain. We re-arrange the equation to find that
\[
\delta (L+1) u_\delta = f - (1-\delta) u_\delta \qquad\Ra\qquad \left(L+1\right)\,u_\delta = \frac{f - u_\delta + \delta\,u_\delta}\delta
\]
which by elliptic regularity theory for the operator $L+1$ implies that
\begin{align*}
\big\|u_\delta\big\|_{C^{2,\alpha}} \leq C_L \left\|\frac{f - u_\delta + \delta\,u_\delta}\delta\right\|_{C^{0,\alpha}}
	\leq \frac{C_L}\delta \left(\|f\|_{C^{0,\alpha}} + (1-\delta)\,\|u_\delta\|_{C^{0,\alpha}}\right)
	\leq \frac{C_L}\delta \,\|f\|_{C^{0,\alpha}}
\end{align*}
by the $C^{0,\alpha}$-estimate.

{\bf Second $L^\infty$-estimate.} We prove $C^0$-convergence by constructing sub- and super-solutions for $u_\delta$.
Take a standard mollifier $\eta$ and consider the convolution $f_\rho = f *\eta_\rho$ for some parameter $\rho>0$. Then we see that
\begin{align*}
\left|D^2 f_\rho\right| &= \left|f * D^2\eta_\rho\right|\\
	&= \left|\rho^{-2}\int_{\T^d}f(y)\:(D^2\eta)_\rho(x-y)\dy \right|\\
	&= \left|\rho^{-2}\int_{\T^d} \left(f(y) - f(x)\right)\,(D^2\eta)_\rho(x-y)\dy\right|\\
	&\leq [f]_{C^{0,\alpha}} \rho^{-2} \int_{\T^d} |x-y|^\alpha\, \left| D^2\eta\left(\frac{x-y}\rho\right)\right|\dy\\
	&\leq [f]_{C^{0,\alpha}} \rho^{\alpha-2} \int_{\T^d} |D^2\eta|(z)\dz
\end{align*}
since $\int_{\T^d}D^2\eta = 0$. The same argument applies to first derivatives. Hence we calculate that
\begin{align*}
(\delta L+1) \left(f_\rho - \bar c\rho^\beta\right) &\leq C\delta\,\left(\|(a^{ij})_{ij}\|_{L^\infty} \rho^{\alpha-2} + |b|_{L^\infty}\rho^{\alpha-1} \right) [f_\rho]_{C^{0,\alpha}} +  \left( f_\rho  - \bar c\rho^\beta\right)\\
	&\leq C_L\,[f]_{C^{0,\alpha}}\,\rho^{\alpha-2}\delta + f + [f]_{C^{0,\alpha}}\,\rho^\alpha - \bar c\,\rho^\beta
\end{align*}
where the constant $C_L$ depends only on the $L^\infty$-norm of the coefficients of $L$. To make sure that this is negative, we need to balance
\[
\rho^\beta = \rho^\alpha = \delta\,\rho^{\alpha-2} \qquad \Ra \qquad \alpha= \beta, \quad \rho^{2} = \delta
\]
and choose $\bar c> (C_L+1)\,[f]_{C^{0,\alpha}}$. Then
\[
(\delta L+1)\left(f_{\delta^{1/2}} - \bar c\,\delta^{\alpha/2}\right) \leq f = (\delta L+1) u_\delta ,
\] 
and since the operator $\delta L+1$ allows a comparison principle, it follows that
\[
u_\delta \geq f_\delta - \bar c\,\delta^\frac\alpha2 \geq f - [f]_{C^{0,\alpha}}\,\rho^\alpha - \bar c\,\delta^{\alpha/2}.
\]
Similarly, we construct a super-solution and obtain
\[
f - C_1\,[f]_{C^{0,\alpha}}\,\delta^{\alpha/2} \leq u_\delta \leq f+C_1\,[f]_{C^{0,\alpha}}\,\delta^{\alpha/2}.
\]
\end{proof}

\begin{remark}
Of course, it would have been possible to include a zeroth order term in $L$ or suitable boundary conditions which are close enough to $f$ and for parts of the theorem to relax the continuity condition on the coefficients.
\end{remark}

\begin{remark}
The same proof based on the translation of $f$ shows that if $f\in c^{0,\alpha}$ where 
\[
c^{0,\alpha}(\T^d) = \left\{ f\in C^{0,\alpha}(\T^d)\:\bigg|\: \lim_{\rho\to 0} \sup_{|x-y|<\rho} \frac{|f(x) - f(y)|}{|x-y|^\alpha} = 0\right\}
\]
then $u_\delta \in c^{0,\alpha}$ as well since
\[
\sup_{|x-y|<\rho} \frac{u_\delta(x) - u_\delta(y)}{|x-y|^\alpha} \leq \sup_{|x-y|<\rho} \frac{f(x) - f(y)}{|x-y|^\alpha}
\] 
by the same argument. It is easy to see that this implies convergence $u_\delta \to f$ in $C^{0,\alpha}$.
\end{remark}


\end{document}